\documentclass[12pt,a4paper,intlimits,sumlimits]{amsart}

\usepackage{amsmath, amsthm, mathtools}
\usepackage{amsfonts}
\usepackage[alphabetic]{amsrefs}
\usepackage{graphicx}
\usepackage{framed}
\usepackage{amssymb}
\usepackage{esvect}
\usepackage{xcolor}
\usepackage{eucal}
\usepackage{mathrsfs}
\usepackage{hyperref}
\usepackage[english]{babel}
\usepackage{mathrsfs}
\usepackage{esint}

\def \N {\mathbb{N}}
\def \R {\mathbb{R}}

\def \Ns {\mathscr{N}_{s}}
\def \Q {\mathcal Q}

\theoremstyle{definition}
\newtheorem{definition}{Definition}[section]
\newtheorem{example}[definition]{Example}
\newtheorem{remark}[definition]{Remark}

\theoremstyle{plain}
\newtheorem{theorem}[definition]{Theorem}
\newtheorem{proposition}[definition]{Proposition}
\newtheorem{lemma}[definition]{Lemma}
\newtheorem{corollary}[definition]{Corollary}

\numberwithin{equation}{section}

\usepackage{geometry}
\geometry{
 a4paper,
 total={170mm,257mm},
 left=20mm,
 top=20mm,
 }

\renewcommand{\epsilon}{\varepsilon}

\renewcommand{\leq}{\leqslant}
\renewcommand{\le}{\leqslant}
\renewcommand{\geq}{\geqslant}
\renewcommand{\ge}{\geqslant}

\allowdisplaybreaks

\title[Superposition of operators with Neumann conditions]{Some nonlinear problems\\
for the superposition of fractional operators\\ with Neumann boundary conditions}

\author[S. Dipierro, E. Proietti Lippi, C. Sportelli and E. Valdinoci]{Serena Dipierro, Edoardo Proietti Lippi, Caterina Sportelli and Enrico Valdinoci}

 \address{Department of Mathematics and Statistics
 \newline\indent University of Western Australia \newline\indent
 35 Stirling Highway, WA 6009 Crawley, Australia.\newline
 \newline\indent
\tt serena.dipierro@uwa.edu.au \newline\indent
\tt edoardo.proiettilippi@uwa.edu.au \newline\indent
\tt caterina.sportelli@uwa.edu.au \newline\indent
\tt enrico.valdinoci@uwa.edu.au}

\begin{document}

\maketitle

\begin{abstract}
We discuss the existence theory of a nonlinear problem of nonlocal type subject to Neumann boundary conditions. Differently from the existing literature, the elliptic operator under consideration is obtained as a superposition of operators of mixed order.

The setting that we introduce is very general and comprises, for instance, the sum of two fractional Laplacians, or of a fractional Laplacian and a Laplacian, as particular cases
(the situation in which there are infinitely many operators, and even a continuous distribution of operators, can be considered as well).

New bits of functional analysis are introduced to deal with this problem. An eigenvalue analysis divides the existence theory into two streams, one related to a Mountain Pass method, the other to a Linking technique.
\end{abstract}

\tableofcontents

\section{Introduction}
In this article, we consider a linear operator of mixed order obtained by the possible superposition of fractional Laplacians and the classical Laplacian. This operator is quite general and comprises the sum of two operators
of different order as a particular case. It also includes continuous superpositions of operators, as well as the case in which continuous and discrete superpositions are both present.

This operator is analyzed in the framework of a nonlinear equation in which the nonlinear source term is ``subcritical'' (of course, since the operator does not have a fixed order, this notion of ``criticality'' is somewhat delicate and needs to be specified with care).

The existence of solutions will be related to an eigenvalue problem and an appropriate analysis of the coefficient in front of the linear term of the source. Specifically, below a suitable eigenvalue the existence will be established through a Mountain Pass argument and above such a threshold by a Linking method.

Given the novelty of the topic under consideration, some new functional settings have to be introduced and the classical energy estimates need to be adapted to this new framework.
\medskip

The mathematical details of the problem under consideration go as follows.
Let $\mu$ be a nonnegative, finite (Borel) measure over $(0, 1)$.
Let $\alpha \ge 0$. The operator that we address here deals with the possible coexistence of a Laplacian and a superposition of fractional Laplacians, namely
\begin{equation}\label{ELLE}
L_{\alpha, \mu} (u):= -\alpha\Delta u +\int_{(0, 1)} (-\Delta)^s u\, d\mu(s).
\end{equation}
In a typical fashion, the notation $(-\Delta)^s$ stands for the fractional Laplacian, defined, for all~$s\in (0, 1)$, as 
\begin{equation}\label{deflaplacianofrazionario}
(- \Delta)^s\, u(x) = c_{N,s}\int_{\R^N} \frac{2u(x) - u(x+y)-u(x-y)}{|y|^{N+2s}}\, dy.
\end{equation}
The positive normalizing constant~$c_{N,s}$ is chosen in such a way to provide
consistent limits as~$s\nearrow1$ and as~$s\searrow0$, namely
\[
\lim_{s\nearrow1}(-\Delta)^su=(-\Delta)^1u=-\Delta u
\qquad{\mbox{and}}\qquad
\lim_{s\searrow0}(-\Delta)^s u=(-\Delta)^0u=u.
\]

Throughout the paper we denote by $\Omega\subset\R^N$ a bounded, Lipschitz domain.
The definition of ``nonlocal normal derivative'' associated with the
fractional Laplacian that we use in this paper has been introduced in~\cite{MR3651008} and reads as
$$ \Ns u(x) :=c_{N,s}\int_{\Omega}\frac{u(x)-u(y)}{|x-y|^{N+2s}}\,dy
\qquad \mbox{ for all }x\in \R^N \setminus \overline{\Omega}.$$
With this notation, we recall the definition of the Neumann
boundary conditions associated with the operator~$L_{\alpha, \mu}$ in~\eqref{ELLE} (these conditions were first introduced  in~\cite{MR4651677}
for the superposition of two operators, and then generalized to the setting
in~\eqref{ELLE} in~\cite{TUTTI}):

\begin{definition}\label{NDEFN}
We say that $u$ satisfies the $(\alpha,\mu)$-\textit{Neumann conditions} if
\begin{equation}\label{alfamucondition}
\begin{cases}
\displaystyle\int_{(0,1)} \Ns u(x)\,d\mu(s)=g\,\, \mbox{ for all }x\in \R^N \setminus \overline{\Omega},    & \mbox{ if }\alpha=0,
\\ \\
\partial_\nu u(x)=h \,\, \mbox{ for all } x\in \partial\Omega,
&  \mbox{ if } \mu\equiv 0,
\\ \\
\partial_\nu u(x)=h \,\, \mbox{ for all } x\in \partial\Omega
 \mbox{ and } \\ 
 \displaystyle\int_{(0,1)} \Ns u(x)\,d\mu(s)=g\,\, \mbox{ for all }x\in \R^N \setminus \overline{\Omega},  
 &\mbox{ if } \alpha\neq 0 \mbox{ and } \mu\not \equiv 0.
\end{cases}
\end{equation}

Moreover, when $g\equiv 0$ in $\R^N\setminus \overline{\Omega}$ 
and~$h\equiv 0$ on~$\partial \Omega$, we say that~\eqref{alfamucondition} are \textit{homogeneous $(\alpha,\mu)$-Neumann conditions}.
\end{definition}

It is worth noting that continuous superpositions of operators of different fractional orders have been recently considered in~\cite{DPSV, DPSV-P} in the case of a signed, finite Borel measure over $[0, 1]$. 
However, the setting considered in~\cite{DPSV, DPSV-P} turns out to be different from the one presented here, since it was tailored for critical problems subject to Dirichlet boundary conditions.

When $\alpha=0$, we need to identify a suitable value in the fractional range $(0, 1)$ which will play the role of a critical exponent (see the forthcoming Proposition~\ref{embeddings}).
In this case, since $\mu$ is nontrivial, there exists $\widetilde{s}_\sharp\in (0, 1)$ such that
\[
\mu([\widetilde{s}_\sharp, 1))>0.
\]
Hence, we define
\begin{equation}\label{ssharp}
s_\sharp:=\begin{cases}
\widetilde{s}_\sharp &\mbox{ if } \alpha =0,\\
1 &\mbox{ if } \alpha\neq 0.
\end{cases}
\end{equation}
Here below and in the rest of the paper we will use the notation
\[
2^*_{s_\sharp} = \frac{2N}{N-2s_\sharp}.
\]
The exponent~$2^*_{s_\sharp}$ plays the role of a ``critical exponent'' for a suitable nonlinear nonlocal problem which we now present in detail.\medskip

Let $\widetilde\lambda\in\R$. Here we address the study of the problem
\begin{equation}\label{problemasottocritico.pre}
\begin{cases}
L_{\alpha,\mu}(u)=\widetilde \lambda u +f(x,u) \qquad \mbox{in }\Omega
\\
\mbox{with homogeneous } (\alpha,\mu)\mbox{-Neumann conditions.}
\end{cases}
\end{equation}
In this setting, it is convenient to consider an equivalent formulation. Namely, setting
\begin{equation}\label{lambdadefn}
\lambda:=\widetilde\lambda+1,
\end{equation}
we consider the problem
\begin{equation}\label{problemasottocritico}
\begin{cases}
L_{\alpha,\mu}(u)+u=\lambda u +f(x,u)  \qquad \mbox{in }\Omega
\\
\mbox{with homogeneous } (\alpha,\mu)\mbox{-Neumann conditions.}
\end{cases}
\end{equation}
By~\cite[Theorem~1.5]{TUTTI} there exists a sequence of eigenvalues
\begin{equation}\label{sequenceLalfamu}
0=\widetilde\lambda_1 < \widetilde\lambda_2\le\widetilde\lambda_3\dots
\end{equation}
associated with the operator $L_{\alpha,\mu}$ under $(\alpha,\mu)$-Neumann conditions. Then, we will use the notation 
\begin{equation}\label{eigennotation}
\lambda_k:=\widetilde \lambda_k+1
\end{equation}
to identify the sequence of eigevanlues of the operator $L_{\alpha,\mu}+Id$ under $(\alpha,\mu)$-Neumann conditions. In particular, by~\eqref{sequenceLalfamu} it follows that
\begin{equation}\label{primouguale1}
\lambda_1 = 1.
\end{equation}
\medskip

The main goal of this article is to construct solutions of~\eqref{problemasottocritico.pre},
or equivalently of~\eqref{problemasottocritico}, for suitable parameter ranges of~$\widetilde\lambda$, or equivalently of~$\lambda$,
and appropriate
assumptions on the nonlinearity.

More explicitly,
throughout the paper we assume that the nonlinear term~$f:\Omega\times \R \to \R$ in~\eqref{problemasottocritico} is a Carath\'eodory function which satisfies a suitable subcritical growth, namely
\begin{equation}\label{AR1}
\begin{split}
&\mbox{there exist some constants $a_1,a_2>0$ and $p\in (2, 2^*_{s_\sharp})$ such that} \\
&|f(x,t)|\leq a_1+a_2|t|^{p-1} \quad\mbox{for any } t\in \R \mbox{ and for a.e. } x\in \Omega,\\
\end{split}
\end{equation}
and
\begin{equation}\label{AR2}
\lim_{t\to 0}\frac{f(x,t)}{t}=0  \quad \mbox{ uniformly for a.e. } x\in \Omega.
\end{equation}
Moreover, setting
\begin{equation}\label{definizioneF}
F(x,t):=\int_0^t f(x,\tau)\,d\tau,
\end{equation}
we assume that the Ambrosetti--Rabinowitz condition holds true, i.e.
\begin{equation}\label{AR3}
\begin{split}
&\mbox{there exist $\vartheta>2$ and $r\geq 0$ such that}\\
&0<\vartheta F(x,t)\leq f(x,t)t  \quad\mbox{for any } t \mbox{ such that } |t|> r \mbox{ and for a.e. } x\in \Omega.
\end{split}
\end{equation}
In addition, we suppose that 
\begin{equation}\label{AR4}
\begin{split}
&\mbox{there exist some constants~$\widetilde{\vartheta}>2$, $a_3>0$ and a function~$a_4\in L^1(\Omega)$ such that}\\
&F(x,t)\geq a_3|t|^{\widetilde{\vartheta}}-a_4(x).
\end{split}
\end{equation}
Furthermore, being $r$ as in~\eqref{AR3}, if~$r>0$, we ask that
\begin{equation}\label{AR5}
F(x,t)\geq 0  \quad\mbox{for any } t\in \R \mbox{ and for a.e. } x\in \Omega.
\end{equation}
We point out that condition~\eqref{AR4} was introduced in~\cite{addendum}
to complete the Ambrosetti-Rabinowitz condition in the presence
of Carath\'eodory functions.

Depending on the location of the parameter~$\lambda$ introduced in~\eqref{lambdadefn}, to prove the existence of a nontrivial solution for the problem~\eqref{problemasottocritico}, we will use either the Mountain Pass Theorem or the Linking Theorem.

To be more precise, on the one hand, when $\lambda\le\lambda_1=1$ (recall~\eqref{primouguale1}) we will show that problem~\eqref{problemasottocritico} has a Mountain Pass geometry.

On the other hand, we address the case $\lambda\ge\lambda_1=1$ as well
through a different method, and specifically
by using a Linking argument. 

We point out that the use of a Linking argument requires a suitable decomposition of the ambient space into the direct sum of closed subspaces. For this reason, when dealing with the case $\lambda\ge\lambda_1$, we restrict ourselves to a suitable subspace which guarantees such a feature. In particular, by using~\cite[Theorem~1.5]{TUTTI}, we are able to provide the existence of a complete orthogonal system of eigenfunctions for such a subspace and then the desired direct sum decomposition. We refer the reader to the definition in~\eqref{tildeacca} and to Section~\ref{sec-link} for a detailed discussion.
Moreover, it is worth noting that the choice of the subspace we have opted for is not the only option. We discuss another possible choice and the associated difficulties in Section~\ref{sec-app}.
\medskip

In our existence results, we will deal with weak solutions of~\eqref{problemasottocritico}, see Definitions~\ref{weakdefn} and~\ref{wdlinking} for the precise setting.

\begin{theorem}\label{MPT}
Let $f:\Omega\times \R \to \R$ satisfy~\eqref{AR1}, \eqref{AR2},  \eqref{AR3} and~\eqref{AR4}.

Then, for any $\lambda<\lambda_1$ there exists a nontrivial weak solution of problem~\eqref{problemasottocritico},
according to Definition~\ref{weakdefn}.
\end{theorem}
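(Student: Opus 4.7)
The plan is to apply the Mountain Pass Theorem to the natural energy functional associated with~\eqref{problemasottocritico}. Setting
\[
J_\lambda(u) := \frac{1}{2}\|u\|^2 - \frac{\lambda}{2}\|u\|_{L^2(\Omega)}^2 - \int_\Omega F(x,u)\,dx,
\]
where $\|\cdot\|$ stands for the Hilbert norm on the energy space associated with $L_{\alpha,\mu}+Id$ under $(\alpha,\mu)$-Neumann conditions (so that $\|u\|^2$ is precisely the quadratic form of $L_{\alpha,\mu}+Id$), the growth bound~\eqref{AR1} and the assumption~\eqref{AR2} ensure that $J_\lambda\in C^1$ and that its critical points coincide with the weak solutions of~\eqref{problemasottocritico} in the sense of Definition~\ref{weakdefn}.

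I would then verify the Mountain Pass geometry. For the local geometry at the origin, the variational characterization in~\eqref{sequenceLalfamu}--\eqref{primouguale1} gives the Poincaré-type inequality $\|u\|_{L^2(\Omega)}^2\leq \|u\|^2$, so that for every $\lambda<1$ one has
\[
\frac{1}{2}\|u\|^2-\frac{\lambda}{2}\|u\|_{L^2(\Omega)}^2 \;\geq\; c_\lambda \|u\|^2, \qquad c_\lambda:=\tfrac{1}{2}\min\{1,\,1-\lambda\}>0.
\]
Combining this with the pointwise bound $|F(x,t)|\le \e t^2 + C_\e |t|^p$ deduced from~\eqref{AR1}--\eqref{AR2} and with the continuous embedding of the energy space into $L^p(\Omega)$ for $p\in(2,2^*_{s_\sharp})$ provided by Proposition~\ref{embeddings}, one gets an estimate of the form $J_\lambda(u)\geq c_\lambda\|u\|^2 - \e c_1\|u\|^2 - C_\e c_2\|u\|^p$, so that $J_\lambda(u)\geq \beta>0$ whenever $\|u\|=\rho$ is chosen sufficiently small. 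For the distant point, I would test on $u_0\equiv 1$ (which lies in the energy space since the $(\alpha,\mu)$-Neumann setting accommodates constants) and use~\eqref{AR4} to bound
$\int_\Omega F(x,tu_0)\,dx \geq a_3 t^{\widetilde{\vartheta}}|\Omega| - \|a_4\|_{L^1(\Omega)}$ for $t>0$. Since $\widetilde{\vartheta}>2$, this yields $J_\lambda(tu_0)\to -\infty$ as $t\to+\infty$, and picking $e:=t u_0$ with $t$ large does the job.

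Next, I would verify the Palais--Smale condition. Given $(u_n)$ with $J_\lambda(u_n)\to c$ and $J'_\lambda(u_n)\to 0$, the standard manipulation $J_\lambda(u_n)-\tfrac{1}{\vartheta}J'_\lambda(u_n)[u_n]$ combined with~\eqref{AR3} (and~\eqref{AR5} when $r>0$, to absorb the contribution of $\{|u_n|\leq r\}$) gives
\[
C(1+\|u_n\|) \;\geq\; \Bigl(\tfrac{1}{2}-\tfrac{1}{\vartheta}\Bigr)\bigl(\|u_n\|^2-\lambda\|u_n\|_{L^2(\Omega)}^2\bigr) - C',
\]
and the same Poincaré-type estimate as above, available because $\lambda<\lambda_1=1$, turns the right-hand side into $c''\|u_n\|^2-C'$ and forces $(u_n)$ to be bounded. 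By reflexivity $u_n\rightharpoonup u$ weakly in the energy space and, by the compact embedding part of Proposition~\ref{embeddings}, strongly in $L^q(\Omega)$ for $q\in[1,2^*_{s_\sharp})$. Evaluating $(J'_\lambda(u_n)-J'_\lambda(u))[u_n-u]\to 0$ and exploiting~\eqref{AR1} together with the strong $L^p$-convergence to kill the nonlinear and $L^2$-terms yields $\|u_n-u\|\to 0$, i.e.\ the Palais--Smale condition holds at every level.

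The Mountain Pass Theorem then produces a critical point $u$ of $J_\lambda$ at level $c\geq \beta>0=J_\lambda(0)$, and hence $u\not\equiv 0$ is the desired weak solution. I expect the main technical obstacle to be the Palais--Smale verification: namely, turning weak convergence in the mixed-order energy space into strong convergence of $\|u_n\|$. This requires carefully handling the interaction between the classical Dirichlet energy (when $\alpha\neq 0$) and the continuous superposition of Gagliardo seminorms $\int_{(0,1)}[\,\cdot\,]_{s}^{2}\,d\mu(s)$, as well as the $(\alpha,\mu)$-Neumann boundary contributions, all of which must pass to the limit consistently using the functional framework developed earlier in the paper.
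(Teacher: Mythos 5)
Your proposal follows essentially the same route as the paper: the same energy functional, the same Poincar\'e-type estimate $\|u\|_{L^2(\Omega)}^2\le\|u\|_{\alpha,\mu}^2$ coming from $\lambda_1=1$, the near-origin geometry via the pointwise bound $|F(x,t)|\le\e t^2+C_\e|t|^p$ combined with Proposition~\ref{embeddings}, the distant point via~\eqref{AR4} (the paper uses a general nonnegative $u_0$ of unit norm, you use $u_0\equiv1$; both work), and Palais--Smale boundedness via the manipulation $\vartheta I(u_n)-\langle I'(u_n),u_n\rangle$ plus the compact embedding to pass from weak to strong convergence.

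There is one slip worth correcting. In the Palais--Smale boundedness step you invoke~\eqref{AR5} to ``absorb the contribution of $\{|u_n|\le r\}$''. But~\eqref{AR5} is not among the hypotheses of Theorem~\ref{MPT} (it is assumed only in Theorem~\ref{LINK}), and even if it were, it would not by itself control
\[
\int_{\Omega\cap\{|u_n|\le r\}}\bigl(f(x,u_n)u_n-\vartheta F(x,u_n)\bigr)\,dx,
\]
since the sign of $f(x,u_n)u_n$ is unconstrained there. The paper instead uses the growth conditions~\eqref{AR1}--\eqref{AR2} (via Lemma~\ref{SV12} with $\e=1$) to see that the integrand is uniformly bounded on $\{|u_n|\le r\}$, giving a constant $C_r$ independent of $n$. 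You already have the requisite pointwise bound in your near-origin argument; applying it on the set $\{|u_n|\le r\}$ in place of the appeal to~\eqref{AR5} closes the gap and makes your argument match the paper's.
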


\begin{theorem}\label{LINK}
Let $f:\Omega\times \R \to \R$ satisfy~\eqref{AR1}, \eqref{AR2},  \eqref{AR3}, \eqref{AR4} and~\eqref{AR5}. 

Then, for any $\lambda\ge\lambda_1$ there exists a nontrivial weak
solution of problem~\eqref{problemasottocritico} according to
Definition~\ref{wdlinking}.
\end{theorem}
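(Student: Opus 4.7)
The plan is to apply the Linking Theorem to the energy functional naturally associated with problem~\eqref{problemasottocritico}. Write
$$\mathcal{I}_\lambda(u) := \frac{1}{2}\|u\|_X^2 - \frac{\lambda}{2}\int_\Omega u^2\,dx - \int_\Omega F(x,u)\,dx,$$
where $X$ is the ambient subspace introduced in~\eqref{tildeacca} and $\|\cdot\|_X^2$ denotes the quadratic form naturally induced by $L_{\alpha,\mu}+\mathrm{Id}$ under the $(\alpha,\mu)$-Neumann conditions. By~\cite[Theorem~1.5]{TUTTI}, $X$ admits a complete orthogonal system of eigenfunctions $\{e_k\}_{k\in\N}$, orthogonal simultaneously in $X$ and in $L^2(\Omega)$, with associated eigenvalues $1=\lambda_1\le\lambda_2\le\cdots\to+\infty$. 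The critical points of $\mathcal{I}_\lambda$ are precisely the weak solutions in the sense of Definition~\ref{wdlinking}.

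Given $\lambda\ge\lambda_1$, I would first select the largest integer $k$ for which $\lambda_k\le\lambda<\lambda_{k+1}$ (absorbing the entire eigenspace of $\lambda_k$ when equality holds), set $V:=\mathrm{span}\{e_1,\ldots,e_k\}$, denote by $W$ the closure of its $X$-orthogonal complement, and fix $e\in W$ of unit $X$-norm. The linking geometry then splits into two estimates. On the sphere $\{u\in W \,:\, \|u\|_X=\rho\}$ the spectral gap
$$\|u\|_X^2-\lambda\,\|u\|_{L^2(\Omega)}^2 \ge \Big(1-\frac{\lambda}{\lambda_{k+1}}\Big)\|u\|_X^2,$$
together with the bound $|F(x,t)|\le\varepsilon t^2+C_\varepsilon|t|^p$ coming from~\eqref{AR1}--\eqref{AR2}, and the continuous embedding $X\hookrightarrow L^p(\Omega)$ supplied by Proposition~\ref{embeddings}, yields $\mathcal{I}_\lambda(u)\ge\alpha>0$ for $\rho$ sufficiently small. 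On the boundary of the bounded set
$$Q:=\{v+te \,:\, v\in V,\ \|v\|_X\le R,\ 0\le t\le R\},$$
the nonpositivity of the quadratic part on $V$, the superquadratic bound $F(x,t)\ge a_3|t|^{\widetilde{\vartheta}}-a_4(x)$ from~\eqref{AR4} (recall $\widetilde{\vartheta}>2$), and the equivalence of norms on the finite-dimensional space $V\oplus\R e$, together give $\mathcal{I}_\lambda(u)\le 0$ once $R$ is chosen large enough.

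The remaining ingredient is the Palais--Smale condition. Given $\{u_n\}\subset X$ with $\mathcal{I}_\lambda(u_n)\to c$ and $\mathcal{I}_\lambda'(u_n)\to 0$, I would combine $\vartheta\,\mathcal{I}_\lambda(u_n)-\langle\mathcal{I}_\lambda'(u_n),u_n\rangle$ and $\mathcal{I}_\lambda(u_n)$ with the Ambrosetti--Rabinowitz inequality~\eqref{AR3} on $\{|u_n|>r\}$ (and with~\eqref{AR5} to discard the potentially negative contribution from $\{|u_n|\le r\}$ when $r>0$), together with the $L^{\widetilde{\vartheta}}$-lower bound supplied by~\eqref{AR4}, in order to derive boundedness of $\|u_n\|_X$ via the standard rescaling argument $\bar u_n:=u_n/\|u_n\|_X$. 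A weakly convergent subsequence, upgraded to strong convergence in $L^p(\Omega)$ through the compact embedding from Proposition~\ref{embeddings}, then closes the argument in the usual way. The Linking Theorem at last produces a critical point $u$ at level $c\ge\alpha>0$, which is therefore nontrivial.

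The main obstacle, and what distinguishes this result from the classical blueprint, lies in performing the spectral and compactness manipulations inside the mixed-order bilinear form underlying $\|\cdot\|_X$. Specifically, the direct sum $X=V\oplus W$ must be orthogonal simultaneously for the $X$-inner product and for the $L^2(\Omega)$-inner product; this is exactly what the Hilbert basis from~\cite[Theorem~1.5]{TUTTI} provides, and it is the substitute for the standard Poincar\'e-type inequality $\|u\|_{L^2(\Omega)}^2\le\lambda_{k+1}^{-1}\|u\|_X^2$ on $W$. A secondary, more structural difficulty, already flagged in the introduction, is the choice of ambient subspace: the subspace in~\eqref{tildeacca} is one viable option but not the only one, and verifying that the required direct-sum decomposition genuinely lives in it is the purpose of Section~\ref{sec-link} (see also Section~\ref{sec-app}).
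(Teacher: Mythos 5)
Your overall architecture matches the paper's: work in the subspace $\widetilde{\mathcal H}_{\alpha,\mu}(\Omega)$ defined in~\eqref{tildeacca}, use the eigenfunction system of~\cite[Theorem~1.5]{TUTTI} (completeness in the $\alpha=0$ case being Proposition~\ref{sistemacompleto}), split $\widetilde{\mathcal H}_{\alpha,\mu}(\Omega)=H_i\oplus H_i^\perp$ with $\lambda\in[\lambda_i,\lambda_{i+1})$, verify the two linking estimates as in Propositions~\ref{LGEO1} and~\ref{LGEO2}, establish Palais--Smale, and invoke the Linking Theorem. Where you genuinely diverge is the boundedness of Palais--Smale sequences. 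You propose a rescaling-and-contradiction argument with $\bar u_n:=u_n/\|u_n\|_{\alpha,\mu}$: from the AR3 estimate on $\vartheta\mathcal I(u_n)-\langle\mathcal I'(u_n),u_n\rangle$ one deduces $\lambda\|\bar u\|_{L^2(\Omega)}^2\ge 1$, hence $\bar u\neq 0$; combined with $F(x,t)\ge a_3|t|^{\widetilde\vartheta}-a_4(x)$ from~\eqref{AR4} (and the compact embedding into $L^{\widetilde\vartheta}(\Omega)$, which is legitimate since~\eqref{AR1} and~\eqref{AR4} together force $\widetilde\vartheta\le p<2^*_{s_\sharp}$), this would give $\mathcal I(u_n)\to-\infty$, contradicting $\mathcal I(u_n)\to c$. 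That route is viable, but Proposition~\ref{boundedlinking} does something different: it fixes $k\in(2,\vartheta)$ and estimates $k\mathcal I(u_n)-\langle\mathcal I'(u_n),u_n\rangle$ directly, using the surplus $(\vartheta-k)\int_\Omega F(x,u_n)\,dx$ together with a Young inequality and~\eqref{AR4} to absorb the term $\lambda\|u_n\|_{L^2(\Omega)}^2$ that the spectral gap no longer controls when $\lambda\ge\lambda_1$. The paper's version is constructive and gives an explicit a priori bound; yours is shorter to state but relies on a blow-up contradiction and compactness. One small imprecision to flag: you say~\eqref{AR5} ``discards'' the contribution from $\{|u_n|\le r\}$ in the Palais--Smale estimate, but $F\ge 0$ alone does not bound $f(x,u_n)u_n-\vartheta F(x,u_n)$ on that set from below; the paper controls it with Lemma~\ref{SV12} (i.e.\ via~\eqref{AR1}--\eqref{AR2}) and reserves~\eqref{AR5} for Proposition~\ref{LGEO2}, where it guarantees $\int_\Omega F(x,u)\,dx\ge 0$ on the finite-dimensional piece $H_i$.
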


Our main results can be compared with the ones stated in~\cite[Theorems~1 and~2]{MR3002745}. Nonetheless, we emphasize that our setting presents different boundary conditions and is more general than the one introduced in~\cite{MR3002745}, not only for the wide generality of the operator~$L_{\alpha, \mu}$ under consideration, but also for the assumptions on the nonlinearity~$f(x, t)$ (e.g. one can compare~\eqref{AR3}, \eqref{AR4} and~\eqref{AR5} here with~\cite[formulas~(1.11), (1.14) and~(1.15)]{MR3002745}).

We emphasize that Theorems~\ref{MPT} and~\ref{LINK} turn out to be not only new in their broad generality, but they also allow us to address many specific cases which are new as well. 

In particular:
\begin{itemize}
%A detailed discussion of this type of results will be given in Corollary~\ref{ILS}.\medskip
\item Let $\beta>0$ and $\mu:=\beta\delta_s$,
being~$\delta_s$ the Dirac's delta at~$s\in(0,1)$. Hence, the operator~$L_{\alpha, \mu}$ introduced in~\eqref{ELLE} boils down to the mixed operator
\[
-\alpha\Delta + \beta(-\Delta)^s
\]
for some~$s\in(0,1)$,  which has been introduced, with Neumann boundary conditions, in~\cite{MR4438596, MR4651677}.

In this setting, to the best of our knowledge, our results happen to be new as well.  We treat this case in detail in Corollary~\ref{cor1}.

Moreover, if $\alpha=0$ and $\beta=1$, then problem 
\eqref{problemasottocritico} turns out to be a particular case
of the one addressed in~\cite[Section~5]{MPPL}.\medskip

\item The case of the superposition of a finite number of nonlocal operators with different orders, corresponding to the choice of the measure
\[
\mu:=\sum_{k=1}^n \delta_{s_k}
\]
and to an operator of the type
\[
-\alpha\Delta+\sum_{k=1}^n (-\Delta)^{s_k}
\]
for some $s_1,\dots, s_n\in(0,1)$, with~$n\ge 2$,
to our best knowledge is new as well in both the cases~$\alpha\neq 0$ and~$\alpha=0$.

The application of Theorems~\ref{MPT} and~\ref{LINK} for this choice of~$\mu$ will be discussed in Corollary~\ref{cor2}.\medskip

\item The case of a convergent series, namely
\[
\mu:= \sum_{k=0}^{+\infty} c_k \,\delta_{s_k}, \qquad{\mbox{where }}\, \sum_{k=0}^{+\infty} c_k\in(0,+\infty),
\]
with $c_k\ge0$ for all $k\in\N$, corresponding to the operator
\[
-\alpha\Delta + \sum_{k=0}^{+\infty} c_k (-\Delta)^{s_k},
\]
is also new in both the cases $\alpha\neq 0$ and $\alpha=0$.

We discuss this case in Corollary~\ref{cor3}.
\medskip

\item The continuous superposition of fractional operators of the form
\[
\int_0^1 \omega(s)(-\Delta)^s u \,ds,
\]
where $\omega$ is a measurable, nonnegative and non identically zero function, turns out to be new as well. 

We address this case in Corollary~\ref{cor4}.
\end{itemize}

\medskip

The rest of the paper is organized as follows. In Section~\ref{sec:func}
we introduce the functional analytical setting that we work in.
Sections~\ref{sec:mp} and~\ref{sec-link} are devoted
to the proofs of Theorems~\ref{MPT} and~\ref{LINK}, respectively.
Some explicit examples and applications depending on the  specific
choice of the measure~$\mu$ are provided in Section~\ref{sec:appex}.

The paper ends with Appendix~\ref {sec-app}, where
we motivate the specific choice of the functional space
used to employ the Linking Theorem.

\section{Functional setting}\label{sec:func}

In this section we introduce a functional analytical setting that is inspired by the one presented in~\cite[Section~2]{TUTTI} and allows us
to address the study of problem~\eqref{problemasottocritico} by using a variational argument.

Let $\Q:=\R^{2N}\setminus (\R^N\setminus \Omega)^2$. We denote the Gagliardo seminorm of $u$ as
\[
[u]_s:= \left({c_{N, s}}\iint_{\mathcal Q} \frac{|u(x)-u(y)|^2}{|x-y|^{N+2s}} \, dx\, dy\right)^{1/2}.
\]
Moreover, we define the functional space
\begin{equation}\label{HALPHAMU}
\mathcal H_{\alpha, \mu} (\Omega):= \begin{cases}
H^1(\Omega) &\hbox{ if } \mu \equiv 0,\\
\mathcal H_\mu (\Omega) &\hbox{ if } \alpha = 0,\\
H^1(\Omega)\cap \mathcal H_\mu (\Omega) &\hbox{ if }  \alpha\neq 0  \text{ and }
\mu \not\equiv 0,
\end{cases}
\end{equation}
endowed with the norm
\begin{equation}\label{normaalfamu}
\begin{split}
\|u\|_{\alpha, \mu} := \Bigg(\|u\|^2_{L^2(\Omega)} + \||h|^{1/2} u\|^2_{L^2(\partial\Omega)} &+ \| |g|^{1/2} u\|^2_{L^2(\R^N\setminus\Omega)} \\
&+ \alpha \|\nabla u\|^2_{L^2(\Omega)} +\frac12 \int_{(0, 1)} [u]_s^2 \, d\mu(s) \Bigg)^{1/2}.
\end{split}
\end{equation}
In~\eqref{HALPHAMU}, the space $\mathcal H_\mu (\Omega)$ is defined
as
\begin{equation}\label{Hmu}
\mathcal H_\mu (\Omega):= \left\lbrace u:\R^N\to\R \text{ measurable } : \|u\|_\mu <+\infty \right\rbrace,
\end{equation}
with the norm
\begin{equation}\label{normamu}
\|u\|_\mu := \left(\|u\|^2_{L^2(\Omega)} + \| |g|^{1/2} u\|^2_{L^2(\R^N\setminus\Omega)} +\frac12\int_{(0, 1)} [u]_s^2 \, d\mu(s) \right)^{1/2}.
\end{equation}

We can now recall the embedding results for the space $\mathcal H_{\alpha, \mu} (\Omega)$ stated in~\cite[Proposition~2.4 and Corollary~2.5]{TUTTI}:

\begin{proposition}\label{embeddings}
Let $s_\sharp$ be as in~\eqref{ssharp}. Then, there exists a positive constant~$c=c(N, \Omega, s_\sharp)$ such that, for any~$u\in\mathcal H_{\alpha,\mu}(\Omega)$, 
\[
\|u\|_{H^{s_\sharp}(\Omega)}\le c \|u\|_{\alpha, \mu},
\]
namely the space $\mathcal H_{\alpha,\mu}(\Omega)$ is continuosly embedded in $L^{2^*_{s_\sharp}}(\Omega)$. 

In addition, $\mathcal H_{\alpha,\mu}(\Omega)$ is compactly embedded in $L^p(\Omega)$ for any $p\in [1,  2^*_{s_\sharp})$.
\end{proposition}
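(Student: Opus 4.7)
The plan is to split the argument into the two cases given by \eqref{ssharp} and reduce both of them to the classical continuous and compact Sobolev embeddings for $H^{s_\sharp}(\Omega)$ on the bounded Lipschitz domain $\Omega$. Once the continuous embedding $\mathcal{H}_{\alpha,\mu}(\Omega)\hookrightarrow H^{s_\sharp}(\Omega)$ is in place, the continuous embedding into $L^{2^*_{s_\sharp}}(\Omega)$ and the compact embedding into $L^p(\Omega)$ for $p\in[1,2^*_{s_\sharp})$ follow at once from the classical (integer-order or fractional) Sobolev and Rellich--Kondrachov theorems on Lipschitz domains.

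The case $\alpha\ne 0$ is immediate. Here $s_\sharp=1$ and, by \eqref{HALPHAMU}, $\mathcal{H}_{\alpha,\mu}(\Omega)\subseteq H^1(\Omega)$. Since \eqref{normaalfamu} already dominates $\|u\|_{L^2(\Omega)}^2+\alpha\|\nabla u\|_{L^2(\Omega)}^2$, one obtains $\|u\|_{H^1(\Omega)}\le (\min\{1,\alpha\})^{-1/2}\,\|u\|_{\alpha,\mu}$ at once, and the non-negativity of the remaining summands in \eqref{normaalfamu} does not spoil the bound.

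The substantial case is $\alpha=0$, so $s_\sharp=\widetilde{s}_\sharp$ and $\mu([s_\sharp,1))>0$. By inner continuity of the finite Borel measure $\mu$ there exists $s^*\in(s_\sharp,1)$ with $m:=\mu([s_\sharp,s^*])>0$, and on the compact interval $[s_\sharp,s^*]$ the normalizing constant $c_{N,s}$ is continuous and bounded away from zero. For every $s\in[s_\sharp,s^*]$ and every $x,y\in\Omega$, the trivial bound $|x-y|\le\mathrm{diam}(\Omega)$ gives
\[
\frac{1}{|x-y|^{N+2s_\sharp}}\le\mathrm{diam}(\Omega)^{2(s-s_\sharp)}\cdot\frac{1}{|x-y|^{N+2s}}.
\]
Integrating over $\Omega\times\Omega\subseteq\mathcal{Q}$ and using the lower bound on $c_{N,s}$ yields a uniform constant $C_0=C_0(N,\Omega,s_\sharp,s^*)$ such that
\[
\iint_{\Omega\times\Omega}\frac{|u(x)-u(y)|^2}{|x-y|^{N+2s_\sharp}}\,dx\,dy\le C_0\,[u]_s^2\qquad\text{for every }s\in[s_\sharp,s^*].
\]
Integrating this (pointwise in $s$) inequality against $\mu$ on $[s_\sharp,s^*]$ and dividing by $m$ gives
\[
\iint_{\Omega\times\Omega}\frac{|u(x)-u(y)|^2}{|x-y|^{N+2s_\sharp}}\,dx\,dy\le\frac{C_0}{m}\int_{(0,1)}[u]_s^2\,d\mu(s)\le\frac{2C_0}{m}\,\|u\|_{\alpha,\mu}^2,
\]
which, together with the $L^2(\Omega)$ contribution already present in $\|u\|_{\alpha,\mu}$, produces the desired inequality $\|u\|_{H^{s_\sharp}(\Omega)}\le c\,\|u\|_{\alpha,\mu}$.

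The main obstacle is precisely this $\alpha=0$ case: because $c_{N,s}\to 0$ as $s\to 1^-$ (a consequence of the chosen normalization ensuring $(-\Delta)^s\to-\Delta$), one cannot estimate uniformly on the full interval $[s_\sharp,1)$, and the argument would collapse if the mass of $\mu$ were concentrated near $s=1$. This is circumvented exactly by the inner regularity of $\mu$, which allows retreating to a compact sub-interval $[s_\sharp,s^*]\subset[s_\sharp,1)$ of positive $\mu$-measure where $c_{N,s}$ stays bounded below. Once the embedding into $H^{s_\sharp}(\Omega)$ is established, the remaining statements are standard.
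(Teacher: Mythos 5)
The paper does not prove this proposition: it is recalled verbatim from \cite[Proposition~2.4 and Corollary~2.5]{TUTTI}, so there is no in-text argument to compare against. That said, your proof is correct and is the expected route. The essential points — retreating from $[s_\sharp,1)$ to a compact subinterval $[s_\sharp,s^*]$ of positive $\mu$-measure via continuity from below, bounding $c_{N,s}$ away from zero on that compact set (which is precisely where one must be careful, since $c_{N,s}\to 0$ as $s\to 1^-$ under this normalization), and using the pointwise kernel comparison $|x-y|^{-(N+2s_\sharp)}\le \mathrm{diam}(\Omega)^{2(s-s_\sharp)}|x-y|^{-(N+2s)}$ on the bounded set $\Omega\times\Omega$ — are all sound. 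Two small remarks. First, your bound controls the Gagliardo seminorm over $\Omega\times\Omega$ only; this suffices for the Sobolev and Rellich--Kondrachov conclusions into $L^{2^*_{s_\sharp}}(\Omega)$ and $L^p(\Omega)$, so the inequality $\|u\|_{H^{s_\sharp}(\Omega)}\le c\|u\|_{\alpha,\mu}$ should be read with $H^{s_\sharp}(\Omega)$ meaning the standard fractional Sobolev space over $\Omega$, not the $\mathcal Q$-based seminorm the paper uses elsewhere (for points with $|x-y|>\mathrm{diam}(\Omega)$ the kernel comparison reverses and the cross-term over $\Omega\times(\R^N\setminus\Omega)$ would need a separate argument). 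Second, the constant you obtain depends on $\alpha$ (when $\alpha\ne 0$) and on $\mu$ through $m=\mu([s_\sharp,s^*])$ and $s^*$ (when $\alpha=0$); the paper's notation $c(N,\Omega,s_\sharp)$ suppresses these dependencies, but since $\alpha$ and $\mu$ are fixed throughout, this does not affect the result.
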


We point out that, since we deal with homogeneous $(\alpha,\mu)$-Neumann boundary conditions in~\eqref{problemasottocritico},
we will assume from now on that~$h\equiv0$ and~$g\equiv0$.

To develop a Linking argument,  we introduce the subspace
\begin{equation}\label{tildeacca}
\widetilde{\mathcal H}_{\alpha, \mu}(\Omega):=\begin{cases}
H^1(\Omega) &\mbox{ if } \mu\equiv 0,\\
\widetilde{\mathcal H}_{\mu}(\Omega) &\mbox{ if } \alpha = 0,\\
 L^2(\R^N) \cap
\mathcal H_{\alpha, \mu}(\Omega)&\mbox{ if } \alpha\neq 0 \text{ and }\mu\not\equiv 0,
\end{cases}
\end{equation}
where
\begin{equation}\label{H0MU}
\widetilde{\mathcal H}_{\mu}(\Omega):=\left\{u\in\mathcal H_{\mu}(\Omega) : \int_{(0, 1)}\Ns u\, d\mu(s)=0
\mbox{ in }\R^N\setminus \overline{\Omega}\right\}.
\end{equation}
We endow the space $\widetilde{\mathcal H}_{\alpha, \mu}(\Omega)$ with the norm defined in~\eqref{normaalfamu}.
\begin{remark}
We observe that there exist functions
in~${\mathcal H}_{0, \mu}(\Omega)={\mathcal H}_{\mu}(\Omega)$ that do not belong to~$\widetilde{\mathcal H}_{0, \mu}(\Omega)
=\widetilde{\mathcal H}_{\mu}(\Omega)$.
Indeed, we consider $\Omega=(-1,1)$ and take~$u:\R\to \R$ such that
\begin{eqnarray*}
&&u(x)=x \quad \mbox{ if } x\in(-1,1)
\\
{\mbox{and }}&& 
\int_{(0, 1)}\Ns u(x)\, d\mu(s)=0\quad \mbox{ if } x\in\R\setminus[-1,1].
\end{eqnarray*}
In this way, we have that~$u\in \widetilde{\mathcal H}_{0, \mu}(\Omega)$ and, for all~$x\in\R\setminus[-1,1]$,
\[
u(x)=
\dfrac{\displaystyle \int_{(0,1)}\int_{-1}^1 \frac{y}{|x-y|^{1+2s}}\,dy\,d\mu(s)}{\displaystyle \int_{(0,1)}\int_{-1}^1 \frac{dy}{|x-y|^{1+2s}}\,d\mu(s)}.
\]
In particular, if $x<-1$, we have that, for any~$s\in(0,1)$,
\[
\int_{-1}^1 \frac{y}{|x-y|^{1+2s}}\,dy\leq 0 ,
\]
and thus $u(x)\leq 0$ if $x\leq 0$.

Now, if we consider the 
positive part of $u$, we have $u^+\in {\mathcal H}_{0, \mu}(\Omega)$ and $u^+(x)=0$ if $x\leq 0$. 
However, for any $x<-1$, we can compute
\begin{equation*}
\int_{(0, 1)}\Ns u^+(x)\, d\mu(s)=\int_{(0, 1)} \int_{-1}^1 \frac{u^+(x)-u^+(y)}{|x-y|^{1+2s}}\,dy\,d\mu(s)
=\int_{(0, 1)}\int_0^1 \frac{-u^+(y)}{|x-y|^{1+2s}}\,dy\,d\mu(s)
<0,
\end{equation*}
and therefore $u^+\not \in \widetilde{\mathcal H}_{0, \mu}(\Omega)$.
\end{remark}

We notice that, by the definition in~\eqref{tildeacca},
if~$\alpha\neq 0$ and~$\mu\not\equiv 0$ we have that the space~$\widetilde{\mathcal H}_{\alpha, \mu}(\Omega)$ is a closed subspace of~$L^2(\R^N)$. Moreover, if~$\mu\equiv 0$, we have that~$\widetilde{\mathcal H}_{\alpha, \mu}(\Omega)$ is a closed subspace of~$L^2(\Omega)$.
Since both~$L^2(\R^N)$ and~$L^2(\Omega)$ are separable, we infer that there exists a complete orthogonal system for~$\widetilde{\mathcal H}_{\alpha, \mu}(\Omega)$ in these cases.

Instead, when $\alpha=0$, we employ the result stated in~\cite[Theorem~1.5]{TUTTI}, according to which there exists a sequence of eigenfunctions~$e_k$ in~${\mathcal H}_{0, \mu}(\Omega)$ associated with the operator~$L_{0,\mu}$ which is a complete orthogonal system in~$L^2(\Omega)$. 

We also remark that, by construction, for any~$k\in\N\setminus\{0\}$
and any~$x\in\R^N\setminus \overline\Omega$,
\begin{equation}\label{dhuwo4ty4tgufkjfbewksbefi4yoi}
\int_{(0, 1)}\Ns e_k(x) d\mu(s) =0.
\end{equation}
This entails that the eigenfunctions~$e_k$ actually
belong to~$\widetilde{\mathcal H}_{0, \mu}(\Omega)$.

As a matter of fact, we have the following result:

\begin{proposition}\label{sistemacompleto}
The sequence $e_k$ provides a complete orthogonal system in $\widetilde{\mathcal H}_{0, \mu}(\Omega)$.
\end{proposition}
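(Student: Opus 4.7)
The plan is to combine the nonlocal integration-by-parts identity for the bilinear form
\[
B(u,v):=\frac12\int_{(0,1)}c_{N,s}\iint_{\Q}\frac{(u(x)-u(y))(v(x)-v(y))}{|x-y|^{N+2s}}\,dx\,dy\,d\mu(s)
\]
with the spectral result from~\cite[Theorem~1.5]{TUTTI}, which supplies the $L^2(\Omega)$-orthogonality and the $L^2(\Omega)$-completeness of $\{e_k\}$. Since, recalling~\eqref{normamu} with $g\equiv 0$, the inner product on $\widetilde{\mathcal H}_{0,\mu}(\Omega)$ is
\[
\langle u,v\rangle_\mu=\int_\Omega uv\,dx+B(u,v),
\]
the task reduces to verifying orthogonality with respect to $\langle\cdot,\cdot\rangle_\mu$ and the triviality of the $\langle\cdot,\cdot\rangle_\mu$-orthogonal complement of $\{e_k\}$.

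For orthogonality, I would invoke the identity (established in~\cite{TUTTI})
\[
B(u,v)=\int_\Omega L_{0,\mu}(u)\,v\,dx+\int_{\R^N\setminus\overline\Omega} v(x)\int_{(0,1)}\Ns u(x)\,d\mu(s)\,dx.
\]
Thanks to~\eqref{dhuwo4ty4tgufkjfbewksbefi4yoi}, the exterior term vanishes whenever the first slot is an eigenfunction, so $B(e_k,\phi)=\widetilde\lambda_k\int_\Omega e_k\phi\,dx$ for every $\phi\in\widetilde{\mathcal H}_{0,\mu}(\Omega)$. Taking $\phi=e_j$ with $j\neq k$ and using the $L^2(\Omega)$-orthogonality yields $\langle e_j,e_k\rangle_\mu=(1+\widetilde\lambda_j)\int_\Omega e_j e_k\,dx=0$.

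For completeness, suppose $u\in\widetilde{\mathcal H}_{0,\mu}(\Omega)$ satisfies $\langle u,e_k\rangle_\mu=0$ for every $k$. By the symmetry of $B$ and the previous formula applied with $e_k$ in the first slot, this gives $(1+\widetilde\lambda_k)\int_\Omega u\,e_k\,dx=0$; since $\widetilde\lambda_k\ge 0$ by~\eqref{sequenceLalfamu}, the prefactor is strictly positive, so $\int_\Omega u\,e_k\,dx=0$ for all $k$, and the $L^2(\Omega)$-completeness of $\{e_k\}$ forces $u\equiv 0$ a.e.\ in $\Omega$. To propagate this to the exterior I would exploit the Neumann-type condition built into~\eqref{H0MU}: since $u=0$ in $\Omega$, for $x\in\R^N\setminus\overline\Omega$
\[
0=\int_{(0,1)}\Ns u(x)\,d\mu(s)=u(x)\int_{(0,1)}c_{N,s}\int_\Omega\frac{dy}{|x-y|^{N+2s}}\,d\mu(s),
\]
and the bracketed coefficient is strictly positive because $\mu$ is a nontrivial nonnegative measure on $(0,1)$. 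Hence $u(x)=0$ a.e.\ in $\R^N\setminus\overline\Omega$, which together with the vanishing on $\Omega$ gives $u\equiv 0$ in $\widetilde{\mathcal H}_{0,\mu}(\Omega)$.

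The main technical obstacle is the clean application of the integration-by-parts identity to pairs $(u,v)\in\widetilde{\mathcal H}_{0,\mu}(\Omega)\times\widetilde{\mathcal H}_{0,\mu}(\Omega)$, but this is exactly the regime treated in~\cite{TUTTI}; once this is in hand, the orthogonality is immediate from the eigenvalue equation, and the only nonstandard step is the exterior recovery via the integral Neumann condition described above, which hinges on the nontriviality of $\mu$.
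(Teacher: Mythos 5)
Your proposal is correct and its ingredients are the same as the paper's, but the completeness step is organized differently. For orthogonality you and the paper both use the weak eigenvalue equation $B(e_k,\phi)=\widetilde\lambda_k\int_\Omega e_k\phi\,dx$ (valid for test functions $\phi$ satisfying the vanishing exterior Neumann condition, which is exactly~\eqref{dhuwo4ty4tgufkjfbewksbefi4yoi}) to reduce $\mu$-orthogonality to $L^2(\Omega)$-orthogonality; this is the same argument as in the paper. For completeness, however, the paper argues constructively: given $u\in\widetilde{\mathcal H}_{0,\mu}(\Omega)$ it expands the restriction $u|_\Omega=\sum a_k e_k$ via $L^2(\Omega)$-completeness and then uses the formula determining $u$ in $\R^N\setminus\overline\Omega$ from $u|_\Omega$ (together with dominated convergence and~\eqref{dhuwo4ty4tgufkjfbewksbefi4yoi}) to show the same series represents $u$ outside $\Omega$. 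You instead take the dual route: show the $\langle\cdot,\cdot\rangle_\mu$-orthogonal complement of $\{e_k\}$ is trivial, by converting $\langle u,e_k\rangle_\mu=0$ into $(1+\widetilde\lambda_k)\int_\Omega u\,e_k\,dx=0$, deducing $u\equiv 0$ on $\Omega$ from $L^2(\Omega)$-completeness, and then propagating $u\equiv 0$ to the exterior via the nonlocal Neumann condition, using that the kernel coefficient $\int_{(0,1)}c_{N,s}\int_\Omega|x-y|^{-N-2s}\,dy\,d\mu(s)$ is strictly positive. Your route sidesteps the interchange-of-series-and-integral step (which the paper handles via dominated convergence) and invokes the standard Hilbert-space criterion for completeness directly; the paper's route has the advantage of explicitly exhibiting the series expansion of an arbitrary element of $\widetilde{\mathcal H}_{0,\mu}(\Omega)$. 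Both are valid; one small point worth spelling out in your write-up is that the integration-by-parts identity you invoke is applied only with the eigenfunction in the first slot, where the exterior boundary term vanishes thanks to the eigenfunction's Neumann condition, which is exactly what the quoted weak formulation in~\cite{TUTTI} provides.
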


\begin{proof}
We first observe that, by construction,
for any~$k$, $j\in\N\setminus\{0\}$,
\[
\int_{(0,1)}\frac{c_{N, s}}{2}\iint_\Q\frac{(e_k(x)-e_k(y))(e_j(x)-e_j(y))}{|x-y|^{N+2s}}\,dx\,dy\,d\mu(s)=\lambda_k \int_\Omega e_k(x) e_j(x)\,dx.
\]
As a consequence, since~$e_k$ is an orthogonal system in~$L^2(\Omega)$, we have that it is orthogonal in~$\widetilde{\mathcal H}_{0, \mu}(\Omega)$ as well.

We now focus on proving the completeness of the system. To this aim,
let~$u\in \widetilde{\mathcal H}_{0, \mu}(\Omega)$. Since~$e_k$ is a complete system in~$L^2(\Omega)$, by considering the restriction of~$u$ over~$\Omega$, there exist coefficients~$a_k\in\R$ such that
\[
u=\sum_{k=1}^{+\infty}a_ke_k \quad \mbox{a.e. in } \Omega.
\]
Moreover, since $u\in \widetilde{\mathcal H}_{0, \mu}(\Omega)$, for any~$x\in \R^N\setminus \overline{\Omega}$ we have that
\[
u(x)=
\dfrac{\displaystyle \int_{(0,1)}\int_\Omega \frac{u(y)}{|x-y|^{N+2s}}\,dy\,d\mu(s)}{\displaystyle \int_{(0,1)}\int_\Omega \frac{dy}{|x-y|^{N+2s}}\,d\mu(s)}
=\dfrac{\displaystyle \int_{(0,1)}\int_\Omega \frac{\sum_{k=1}^{+\infty}a_ke_k(y)}{|x-y|^{N+2s}}\,dy\,d\mu(s)}{\displaystyle \int_{(0,1)}\int_\Omega \frac{dy}{|x-y|^{N+2s}}\,d\mu(s)}.
\]
Hence, by the Dominated Convergence Theorem, and
recalling~\eqref{dhuwo4ty4tgufkjfbewksbefi4yoi}, we get
\[
u(x)=\sum_{k=1}^{+\infty}a_k\;\dfrac{\displaystyle \int_{(0,1)}\int_\Omega \frac{e_k(y)}{|x-y|^{N+2s}}\,dy\,d\mu(s)}{\displaystyle \int_{(0,1)}\int_\Omega \frac{dy}{|x-y|^{N+2s}}\,d\mu(s)}
=\sum_{k=1}^{+\infty}a_k e_k(x),
\]
as desired.
\end{proof}

We now prove that the subspace $\widetilde{\mathcal H}_{0, \mu}(\Omega)$ is closed with 
respect to the weak convergence.

\begin{proposition}\label{chiusoconvergenzadebole}
The space $\widetilde{\mathcal H}_{0, \mu}(\Omega)$ is closed
with respect to the weak convergence in 
$\widetilde{\mathcal H}_{0, \mu}(\Omega)$.
\end{proposition}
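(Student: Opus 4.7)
The approach is to realize $\widetilde{\mathcal H}_{0,\mu}(\Omega)$ as the intersection of kernels of a family of continuous linear functionals on the ambient Hilbert space $\mathcal H_{0,\mu}(\Omega)=\mathcal H_\mu(\Omega)$, indexed by a suitable dense set of test functions; such intersections are automatically weakly closed. Concretely, given a sequence $\{u_n\}\subset\widetilde{\mathcal H}_{0,\mu}(\Omega)$ with $u_n\rightharpoonup u$ in $\mathcal H_\mu(\Omega)$, I want to verify the defining Neumann-type condition
\[
\int_{(0,1)}\Ns u(x)\,d\mu(s)\;=\;0 \qquad \text{for a.e.\ } x\in\R^N\setminus\overline\Omega.
\]
By the fundamental lemma of the calculus of variations, it is enough to test against arbitrary $\phi\in C^\infty_c(\R^N\setminus\overline\Omega)$.

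The key step is to encode this test as an inner-product pairing on $\mathcal H_\mu(\Omega)$. Given such a $\phi$, let $\tilde\phi$ be its extension by zero on $\overline\Omega$. Granted $\tilde\phi\in\mathcal H_\mu(\Omega)$ (see below), a direct expansion of the inner product associated with the norm~\eqref{normamu} (with $g\equiv 0$) yields the identity
\[
\langle\tilde\phi,v\rangle_\mu \;=\; \int_{\R^N\setminus\overline\Omega}\phi(x)\int_{(0,1)}\Ns v(x)\,d\mu(s)\,dx
\qquad\text{for every } v\in\mathcal H_\mu(\Omega),
\]
because the $L^2(\Omega)$-piece vanishes (as $\tilde\phi\equiv 0$ on $\Omega$) and, after splitting $\Q$ according to which of $x$, $y$ lies in $\Omega$ and using the symmetry $x\leftrightarrow y$, the fractional piece reduces by Fubini to the displayed Neumann-type pairing. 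Evaluated at $v=u_n$, this pairing vanishes since $u_n\in\widetilde{\mathcal H}_{0,\mu}(\Omega)$, and weak convergence yields $\langle\tilde\phi,u\rangle_\mu = 0$. By the arbitrariness of $\phi$, we conclude that $u\in\widetilde{\mathcal H}_{0,\mu}(\Omega)$.

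The main technical obstacle, and essentially the only nontrivial estimate, is verifying that $\tilde\phi$ indeed belongs to $\mathcal H_\mu(\Omega)$, i.e.\ that $\int_{(0,1)}[\tilde\phi]_s^2\,d\mu(s)<+\infty$. This uses three ingredients: since $\mathrm{supp}\,\phi$ lies at positive distance $\delta>0$ from $\overline\Omega$, the kernel $|x-y|^{-N-2s}$ appearing in $[\tilde\phi]_s^2$ stays bounded, uniformly in $s\in(0,1)$, on the only pairs that give nonzero contribution (namely those with $x\in\mathrm{supp}\,\tilde\phi$ and $y\in\Omega$, or vice versa); the normalizing constant $c_{N,s}$ is bounded on $(0,1)$ in view of its continuous and vanishing behavior at the endpoints; and $\mu$ is a finite measure on $(0,1)$. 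Combined with the smoothness and compact support of $\phi$, these three facts give the required bound, which closes the argument.
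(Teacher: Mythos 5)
Your argument is correct, and it is structurally different from the paper's, so a comparison is worthwhile. The paper shows closedness by hand: it uses the compact embedding to get strong $L^p(\Omega)$ and a.e.\ convergence with domination, writes the exterior values of each $u_k$ via the explicit representation formula coming from the constraint $\int_{(0,1)}\Ns u_k\,d\mu(s)=0$, passes to a pointwise limit $v$ by dominated convergence, and then identifies $u=v$ outside $\Omega$ by testing the weak convergence against $\varphi\in C_c^\infty(\R^N\setminus\overline\Omega)$ and splitting the resulting bilinear form over $\Q$ into the pieces $I_1$ and $I_2$, which converge by rather different mechanisms. Your route packages all of this into one clean observation: for each $\phi\in C_c^\infty(\R^N\setminus\overline\Omega)$ the zero-extension $\tilde\phi$ belongs to $\mathcal H_\mu(\Omega)$, and the pairing $v\mapsto\langle\tilde\phi,v\rangle_\mu$ computes exactly $\int_{\R^N\setminus\overline\Omega}\phi\int_{(0,1)}\Ns v\,d\mu(s)\,dx$ (by the same split-over-$\Q$ and symmetry manipulation the paper performs, but carried out once and for all at the level of the inner product). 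Thus $\widetilde{\mathcal H}_{0,\mu}(\Omega)$ is an intersection of kernels of continuous linear functionals on the Hilbert space $\mathcal H_\mu(\Omega)$, hence a closed subspace, hence weakly closed. This bypasses the compact embedding, the a.e.\ convergence with domination, and the representation formula entirely, and makes the closedness an immediate corollary of Riesz representation plus the definition of weak convergence. The only genuine work in your version is the two computations you flag — that $\tilde\phi\in\mathcal H_\mu(\Omega)$ (uses the positive distance $\delta$ from $\overline\Omega$, boundedness of $c_{N,s}$ on $(0,1)$, and finiteness of $\mu$, all correctly invoked) and the pairing identity (where one should also note, as you implicitly do, that Fubini is legitimate: a Cauchy–Schwarz argument using the finiteness of $\int_\Omega |x-y|^{-N-2s}\,dx$ for $x\in\mathrm{supp}\,\phi$ gives absolute integrability). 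Both check out, so the proof is complete and arguably tidier than the paper's.
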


\begin{proof}
Let $u_k$ be a sequence in $\widetilde{\mathcal H}_{0, \mu}(\Omega)$ which converges weakly to some $u$. We aim to prove that $u\in \widetilde{\mathcal H}_{0, \mu}(\Omega)$.

We observe that $u_k$ converges weakly in ${\mathcal H}_{0, \mu}(\Omega)$ and $u\in{\mathcal H}_{0, \mu}(\Omega)$.
Then, by the compact embedding stated in Proposition~\ref{embeddings},
$u_k$ converges strongly to $u$ in $L^p(\Omega)$ for any 
$p\in [1,2^*_{s_{\sharp}})$. Moreover, $u_k$ converges to $u$ 
a.e. in $\Omega$ and $|u_k|\leq h$ for some $h\in L^p(\Omega)$
(see~\cite[Theorem~IV.9]{brezis}).
Then, recalling that $u_k\in \widetilde{\mathcal H}_{0, \mu}(\Omega)$, if $x\in\R^N\setminus \overline{\Omega}$,
we can apply the Dominated Convergence Theorem to get
\begin{equation}\label{vdefn}
\lim_{k\to +\infty}u_k(x)=
\lim_{k\to +\infty}
\dfrac{\displaystyle \int_{(0,1)}\int_\Omega \frac{u_k(y)}{|x-y|^{N+2s}}\,dy\,d\mu(s)}{\displaystyle \int_{(0,1)}\int_\Omega \frac{dy}{|x-y|^{N+2s}}\,d\mu(s)}
=
\dfrac{\displaystyle \int_{(0,1)}\int_\Omega \frac{u(y)}{|x-y|^{N+2s}}\,dy\,d\mu(s)}{\displaystyle \int_{(0,1)}\int_\Omega \frac{dy}{|x-y|^{N+2s}}\,d\mu(s)}=:v(x).
\end{equation}

We claim that 
\begin{equation}\label{claimuv}
u=v \quad\mbox{ a.e. in } \R^N\setminus\overline\Omega.
\end{equation}
To this aim, let $\varphi\in C_c^\infty(\R^N\setminus \overline{\Omega}, [0,+\infty))$. Since~$u_k$ converges weakly to~$u$
in~${\mathcal H}_{0, \mu}(\Omega)$, we have that
\begin{equation}\label{limiteuv1}
\lim_{k\to +\infty}\int_{(0,1)}\frac{c_{N, s}}{2}\iint_\Q\frac{\big( (u_k-u)(x)- (u_k-u)(y)\big)(\varphi(x)-\varphi(y))}{|x-y|^{N+2s}}\,dx\,dy\,d\mu(s)=0.
\end{equation}
We write $\Q = \big(\Omega\times\Omega\big)\cup\big(\Omega\times(\R^N\setminus {\Omega})\big)\cup\big((\R^N\setminus  {\Omega})\times\Omega\big)$.  Since~$\varphi=0$ in~$\Omega$, 
\begin{equation}\label{limiteuv2}
\int_{(0,1)}\frac{c_{N, s}}{2}\iint_{\Omega\times\Omega}\frac{\big( (u_k-u)(x)- (u_k-u)(y)\big)(\varphi(x)-\varphi(y))}{|x-y|^{N+2s}}\,dx\,dy\,d\mu(s)=0.
\end{equation}
Moreover,  we have
\begin{equation}\label{limiteuv3}
\begin{split}
&\int_{(0,1)}\frac{c_{N, s}}{2}\iint_{\Omega\times(\R^N\setminus  {\Omega})}\frac{\big( (u_k-u)(x)- (u_k-u)(y)\big)(\varphi(x)-\varphi(y))}{|x-y|^{N+2s}}\,dx\,dy\,d\mu(s)\\
&= \int_{(0,1)}\frac{c_{N, s}}{2}\iint_{\Omega\times(\R^N\setminus  {\Omega})}\frac{(u_k-u)(y)\,\varphi(y)}{|x-y|^{N+2s}}\,dx\,dy\,d\mu(s)\\
&\qquad - \int_{(0,1)}\frac{c_{N, s}}{2}\iint_{\Omega\times(\R^N\setminus  {\Omega})}\frac{(u_k-u)(x)\,\varphi(y)}{|x-y|^{N+2s}}\,dx\,dy\,d\mu(s)=: I_1 - I_2.
\end{split}
\end{equation}
We start considering $I_2$.  We have
\[
\begin{split}
I_2&=\int_\Omega(u_k(x) - u(x)) \left(\;\int_{(0,1)}\frac{c_{N, s}}{2}\int_{\R^N\setminus \overline{\Omega}} \frac{\varphi(y)}{|x-y|^{N+2s}} \, dy\, d\mu(s) \right) dx.
\end{split}
\]
Since $\mbox{supp}(\varphi)\subset \R^N\setminus \overline{\Omega}$, we have
\[
\frac{c_{N, s}\varphi(y)}{|x-y|^{N+2s}} \ \mbox{ is bounded in } (0, 1)\times\mbox{supp}(\varphi),
\]
which entails that
\[
|I_2|\le C \int_\Omega |u_k(x) - u(x)|\, dx,
\]
for some $C=C(\varphi, \Omega, \mu)>0$. This gives that
\begin{equation}\label{limiteuv4}
\lim_{k\to +\infty}  \int_{(0,1)}\frac{c_{N, s}}{2}\iint_{\Omega\times(\R^N\setminus {\Omega})}\frac{(u_k-u)(x)\,\varphi(y)}{|x-y|^{N+2s}}\,dx\,dy\,d\mu(s) =\lim_{k\to+\infty} I_2 =0.
\end{equation}

Furthermore, 
\[
\begin{split}
I_1 &= \int_{\R^N\setminus \overline{\Omega}} (u_k-u)(y) \varphi(y) \left(  \int_{(0,1)}\frac{c_{N, s}}{2}\int_{\Omega} \frac{1}{|x-y|^{N+2s}} \, dx\, d\mu(s) \right) dy\\
&= \int_{\R^N\setminus \overline{\Omega}} C_\Omega (y) (u_k-u)(y) \varphi(y)\, dy,
\end{split}
\]
where $C_\Omega (y)>0$ is bounded on the support of $\varphi$ (in light of the fact that $\mbox{supp}(\varphi)\subset \R^N\setminus \overline{\Omega}$).
By \eqref{vdefn} and the Dominated Convergence Theorem, we get
\begin{equation}\label{limiteuv5}\begin{split}
&\lim_{k\to +\infty} \int_{(0,1)}\frac{c_{N, s}}{2}\iint_{\Omega\times(\R^N\setminus \overline{\Omega})}\frac{(u_k-u)(y)\,\varphi(y)}{|x-y|^{N+2s}}\,dx\,dy\,d\mu(s)=\lim_{k\to +\infty}I_1\\&\qquad\qquad= \int_{\R^N\setminus \overline{\Omega}} C_\Omega (y) (v-u)(y) \varphi(y)\, dy.
\end{split}\end{equation}

Moreover, exchanging the role of~$x$ and~$y$, we observe that
\begin{equation}\label{limiteuv6}
\begin{split}
&\int_{(0,1)}\frac{c_{N, s}}{2}\iint_{\Omega\times(\R^N\setminus \overline{\Omega})}\frac{\big( (u_k-u)(x)- (u_k-u)(y)\big)(\varphi(x)-\varphi(y))}{|x-y|^{N+2s}}\,dx\,dy\,d\mu(s)\\
&=\int_{(0,1)}\frac{c_{N, s}}{2}\iint_{(\R^N\setminus \overline{\Omega})\times\Omega}\frac{\big( (u_k-u)(x)- (u_k-u)(y)\big)(\varphi(x)-\varphi(y))}{|x-y|^{N+2s}}\,dx\,dy\,d\mu(s)
\end{split}
\end{equation}
Hence, combining \eqref{limiteuv1}, \eqref{limiteuv2}, \eqref{limiteuv3}, \eqref{limiteuv4}, \eqref{limiteuv5} and \eqref{limiteuv6}, we get
\[
\int_{\R^N\setminus \overline{\Omega}} C_\Omega (y) (v-u)(y) \varphi(y)\, dy = 0,
\]
namely $u=v$ a.e. on the support of $\varphi$. Since $\varphi$ is arbitrary, we obtain the claim in~\eqref{claimuv}.

{F}rom \eqref{claimuv} we infer that~$u_k$ converges to~$u$ a.e. in~$\R^N$. This and~\eqref{vdefn} give that~$u$ satisfies
\[
\int_{(0,1)}\Ns u\,d\mu(s)=0.
\]
This shows that~$u \in \widetilde{\mathcal H}_{0, \mu}(\Omega)$, as desired.
\end{proof}

For further reference, we recall the following result stated in~\cite[Lemma~3]{SV12}:

\begin{lemma}\label{SV12}
Assume that $f$ satisfies~\eqref{AR1} and~\eqref{AR2}.
Then, for any~$\varepsilon>0$ there exists~$\delta=\delta(\varepsilon)$ such that, for a.e.~$x\in \Omega$ and for any~$t\in \R$,
\begin{eqnarray}&&
|f(x,t)|\leq 2\varepsilon|t|+p\delta(\varepsilon)|t|^{p-1}\nonumber
\\
{\mbox{and }}&&
\label{F1}
|F(x,t)|\leq \varepsilon|t|^2+\delta(\varepsilon)|t|^{p},
\end{eqnarray}
where $F$ is defined as in~\eqref{definizioneF}.
\end{lemma}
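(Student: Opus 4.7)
The plan is to establish the pointwise bound on $|f(x,t)|$ by separating the small and large values of $|t|$, using \eqref{AR2} near zero and \eqref{AR1} away from zero, and then to obtain the bound on $|F(x,t)|$ by a straightforward integration of the bound on $|f|$.

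First I would exploit \eqref{AR2}: given $\varepsilon>0$, the uniform limit condition yields some $\rho=\rho(\varepsilon)>0$ (depending only on $\varepsilon$ and on the uniformity described in \eqref{AR2}) such that
\[
|f(x,t)|\le \varepsilon|t|\qquad\text{for a.e.\ }x\in\Omega\text{ and every }t\in\R\text{ with }|t|\le\rho.
\]
In particular this trivially gives $|f(x,t)|\le 2\varepsilon|t|+p\delta(\varepsilon)|t|^{p-1}$ for any nonnegative $\delta(\varepsilon)$ when $|t|\le\rho$. For $|t|>\rho$ I would use \eqref{AR1}, rewriting the constant term by means of the inequality $1\le (|t|/\rho)^{p-1}$, which holds because $p>2>1$: this yields
\[
|f(x,t)|\le a_1+a_2|t|^{p-1}\le \Bigl(\frac{a_1}{\rho^{p-1}}+a_2\Bigr)|t|^{p-1}.
\]
Choosing $\delta(\varepsilon)$ large enough so that $p\,\delta(\varepsilon)\ge a_1/\rho(\varepsilon)^{p-1}+a_2$ then yields $|f(x,t)|\le p\,\delta(\varepsilon)|t|^{p-1}\le 2\varepsilon|t|+p\,\delta(\varepsilon)|t|^{p-1}$ on $\{|t|>\rho\}$ as well. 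Combining the two regimes gives the first displayed estimate.

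For the estimate on $F$, I would integrate the pointwise bound just obtained. For $t\ge 0$,
\[
|F(x,t)|\le \int_0^{t}|f(x,\tau)|\,d\tau\le \int_0^{t}\bigl(2\varepsilon\tau+p\,\delta(\varepsilon)\tau^{p-1}\bigr)d\tau=\varepsilon\,t^{2}+\delta(\varepsilon)\,t^{p},
\]
and the case $t<0$ is reduced to the previous one by the change of variable $\tau\mapsto-\tau$, yielding $|F(x,t)|\le\varepsilon|t|^{2}+\delta(\varepsilon)|t|^{p}$ for every $t\in\R$.

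There is no genuine obstacle here: the only mild subtlety is to make sure that the threshold $\rho$ coming from \eqref{AR2} is chosen uniformly in $x$ (which is guaranteed by the ``uniformly for a.e.\ $x\in\Omega$'' clause in \eqref{AR2}) and to bundle the dependence on $a_1$, $a_2$ and $\rho(\varepsilon)$ into a single constant $\delta(\varepsilon)$. The slack factor $2$ in front of $\varepsilon|t|$ in the first inequality is not needed for the argument just described, but keeping it free (rather than forcing it to be $\varepsilon$) is what allows the same $\delta(\varepsilon)$ to serve both estimates after integration.
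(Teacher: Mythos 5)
Your proof is correct. The paper does not give its own argument for this lemma---it is quoted verbatim from~\cite[Lemma~3]{SV12}---and the reasoning you present (splitting into $|t|\le\rho$ and $|t|>\rho$, using~\eqref{AR2} near the origin, absorbing the constant $a_1$ from~\eqref{AR1} into the power term via $1\le(|t|/\rho)^{p-1}$, then integrating to pass to~$F$) is precisely the standard argument given there. One small remark: the coefficients $2\varepsilon$ and $p\delta(\varepsilon)$ are tuned so that the integration step lands exactly on $\varepsilon|t|^2+\delta(\varepsilon)|t|^p$; as you observe, $\varepsilon$ in place of $2\varepsilon$ would still give a bound (with $\varepsilon/2$ after integration), so the factor $2$ is cosmetic rather than essential.
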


\section{Existence of a Mountain Pass solution
and proof of Theorem~\ref{MPT}}\label{sec:mp}

In this section we provide the existence of a
Mountain Pass solution for problem~\eqref{problemasottocritico}. To this end, we start providing the definition of weak solution for problem~\eqref{problemasottocritico}.

\begin{definition}\label{weakdefn}
We say that $u\in \mathcal{H}_{\alpha, \mu}(\Omega)$ is a weak solution of problem~\eqref{problemasottocritico} if, for any~$v\in  \mathcal{H}_{\alpha, \mu}(\Omega)$,
\begin{equation}\label{weaksolution}
\begin{split}
\alpha \int_\Omega \nabla u\cdot\nabla v\, dx &+\int_{(0, 1)} \frac{c_{N, s}}{2} \iint_{\Q} \frac{(u(x)-u(y))(v(x)-v(y))}{|x-y|^{N+2s}} \,dx\,dy\, d\mu(s)\\
&+\int_\Omega uv\, dx = \lambda\int_\Omega uv\, dx +\int_\Omega f(x, u) v \, dx.
\end{split}
\end{equation}
\end{definition}

It is easy to check that if $f$ verifies~\eqref{AR1}, then the functional $I:\mathcal{H}_{\alpha, \mu}(\Omega)\to\R$ defined as
\begin{equation}\label{fun1}
\begin{split}
I(u):&= \frac{\alpha}{2}\int_\Omega |\nabla u|^2 \,dx +\int_{(0, 1)} \frac{c_{N, s}}{4} \iint_{\Q} \frac{|u(x)-u(y)|^2}{|x-y|^{N+2s}} \,dx\,dy\, d\mu(s)\\
&\qquad+\frac12 \int_{\Omega} |u|^2 \, dx -\frac{\lambda}{2} \int_{\Omega} |u|^2 \, dx - \int_\Omega F(x, u) \, dx
\end{split}
\end{equation}
is of class $C^1$ and satisfies, for any~$u$, $v\in \mathcal{H}_{\alpha, \mu}(\Omega)$,
\begin{equation}\label{diff1}
\begin{split}
\langle I'(u), v\rangle &=\alpha \int_\Omega \nabla u\cdot\nabla v\, dx +\int_{(0, 1)} \frac{c_{N, s}}{2} \iint_{\Q} \frac{(u(x)-u(y))(v(x)-v(y))}{|x-y|^{N+2s}} \,dx\,dy\, d\mu(s)\\
&\qquad+\int_\Omega uv\, dx - \lambda\int_\Omega uv\, dx - \int_\Omega f(x, u) v \, dx.
\end{split}
\end{equation}

Moreover, the critical points of $I$ are weak solutions of problem~\eqref{problemasottocritico} (see e.g.~\cite[Proposition~4.2]{TUTTI}).

We now take into account the case $\lambda<\lambda_1$ and we prove that sequences approaching a given energy level are bounded.  We point out that a similar result holds for the complementary case~$\lambda\ge\lambda_1$, but it requires a different care (see the forthcoming Proposition~\ref{boundedlinking}).

Both the results will be an useful ingredient to prove that the Palais-Smale condition is satisfied at any level.

\begin{proposition}\label{PropPS1}
Let $\lambda<\lambda_1$.
Assume that $f$ satisfies~\eqref{AR1}, \eqref{AR2} and~\eqref{AR3}. Let $c\in\R$ and let $u_n$ be a sequence in $\mathcal{H}_{\alpha, \mu}(\Omega)$ such that
\begin{equation}\label{bdd1}
\lim_{n\to +\infty} I(u_n)=c
\end{equation}
and
\begin{equation}\label{bdd2}
\lim_{n\to +\infty} \sup_{\substack{v\in \mathcal{H}_{\alpha, \mu}(\Omega)\\ \|v\|_{\alpha, \mu}=1}} |\langle I'(u_n), v\rangle| =0.
\end{equation}

Then, $u_n$ is bounded in $\mathcal{H}_{\alpha, \mu}(\Omega)$.
\end{proposition}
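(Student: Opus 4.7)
The plan is to run the classical Palais--Smale boundedness argument: combine $I(u_n)$ with $\langle I'(u_n),u_n\rangle$ and invoke the Ambrosetti--Rabinowitz condition to control the nonlinear terms, while using $\lambda<\lambda_1=1$ to secure coercivity of the quadratic part.

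First, I would evaluate $I(u_n)-\tfrac{1}{\vartheta}\langle I'(u_n),u_n\rangle$ using the formulas~\eqref{fun1} and~\eqref{diff1}. After cancellations, the contributions from the gradient, Gagliardo and $L^2(\Omega)$ quadratic pieces combine, in view of~\eqref{normaalfamu}, into
\[
\Bigl(\tfrac12-\tfrac1\vartheta\Bigr)\bigl(\|u_n\|_{\alpha,\mu}^2-\lambda\|u_n\|_{L^2(\Omega)}^2\bigr),
\]
while the nonlinear terms collapse to $\int_\Omega\bigl(\tfrac{1}{\vartheta}f(x,u_n)u_n-F(x,u_n)\bigr)\,dx$. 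By~\eqref{bdd1}--\eqref{bdd2}, the left-hand side of the resulting identity is bounded above by $c+o(1)+o(1)\,\|u_n\|_{\alpha,\mu}$.

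For the nonlinear integral, I would split $\Omega=\{|u_n|\leq r\}\cup\{|u_n|>r\}$. On the second set, condition~\eqref{AR3} yields $\tfrac1\vartheta f(x,u_n)u_n-F(x,u_n)\geq 0$ pointwise. On the first set, the subcritical growth~\eqref{AR1} (and the elementary consequence $|F(x,t)|\le a_1|t|+\tfrac{a_2}{p}|t|^p$) bounds the integrand by a constant depending only on $a_1,a_2,r,p$, so the integral over this piece is bounded below by some $-C_0$ independent of $n$.

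For the quadratic part, I would exploit the fact that $\|u\|_{\alpha,\mu}^2\ge\|u\|_{L^2(\Omega)}^2$ for every $u\in\mathcal H_{\alpha,\mu}(\Omega)$, which is transparent from~\eqref{normaalfamu} and is also the variational content of~\eqref{primouguale1} (equality being realised on constants). A quick case distinction on the sign of $\lambda$ then gives
\[
\|u_n\|_{\alpha,\mu}^2-\lambda\|u_n\|_{L^2(\Omega)}^2\geq \kappa\,\|u_n\|_{\alpha,\mu}^2,\qquad \kappa:=\min\{1,\,1-\lambda\}>0.
\]
Combining this with the two previous steps,
\[
c+o(1)+o(1)\,\|u_n\|_{\alpha,\mu}\geq\Bigl(\tfrac12-\tfrac1\vartheta\Bigr)\kappa\,\|u_n\|_{\alpha,\mu}^2-C_0,
\]
and since $\vartheta>2$ the coefficient of the quadratic term on the right is strictly positive, forcing $\|u_n\|_{\alpha,\mu}$ to be bounded.

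The only genuinely delicate bookkeeping point is making sure that the spectral information $\lambda_1=1$ produces a coercive estimate on the \emph{full} ambient norm $\|\cdot\|_{\alpha,\mu}$, rather than merely on the nonlocal or gradient pieces: the explicit form of~\eqref{normaalfamu} makes this immediate, but one has to be careful not to discard the non-$L^2$ contributions of the norm during the algebraic rearrangement. Note that hypothesis~\eqref{AR4} is not used in this argument, as expected: it is needed to produce the Mountain Pass geometry, not for Palais--Smale compactness.
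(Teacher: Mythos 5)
Your argument is correct and follows essentially the same strategy as the paper: test against $\vartheta I(u_n)-\langle I'(u_n),u_n\rangle$ (you normalise by $\vartheta$, which is cosmetic), split $\Omega$ along $\{|u_n|\le r\}$ and $\{|u_n|>r\}$, use~\eqref{AR3} on the large set and the polynomial growth from~\eqref{AR1} on the small set, then extract coercivity from $\lambda<\lambda_1=1$; the only stylistic difference is that you bound the low set directly from~\eqref{AR1} while the paper invokes Lemma~\ref{SV12}, and both work. In fact your $\kappa=\min\{1,1-\lambda\}$ formulation is slightly more careful than the paper's displayed inequality $-\lambda(\vartheta/2-1)\|u_n\|^2_{L^2(\Omega)}\ge-\lambda(\vartheta/2-1)\|u_n\|^2_{\alpha,\mu}$, which as written is only valid for $\lambda\ge0$ (for $\lambda<0$ one should instead simply discard the nonnegative term), though the boundedness conclusion is unaffected.
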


\begin{proof}
By~\eqref{bdd1} and~\eqref{bdd2}, there exists a positive constant $c_1$ such that, for\footnote{Notice that, without loss of generality, we can suppose that~$\|u_n\|_{\alpha, \mu}\neq0$ for any~$n\in\N$.} any~$n\in\N$,
\begin{equation*}
|I(u_n)|\le c_1\quad\mbox{ and }\quad \left|\left\langle I'(u_n), \frac{u_n}{\|u_n\|_{\alpha, \mu}}\right\rangle\right|\le c_1.
\end{equation*}
{F}rom this, if~$\vartheta$ is as in~\eqref{AR3}, we see that
\begin{equation}\label{c11}
\vartheta I(u_n) - \langle I'(u_n), u_n\rangle \le c_1\big(\vartheta +\|u_n\|_{\alpha, \mu}\big).
\end{equation}

Moreover, we take $r$ as in assumption~\eqref{AR3}. 
Then,
\begin{equation}\label{cnaoj}
\int_{\Omega \cap\{|u_n|> r\}}\Big(f(x, u_n) u_n -\vartheta F(x, u_n)\Big)\,dx \ge 0.
\end{equation}
On the other hand, if $r>0$, by using Lemma~\ref{SV12} with~$\varepsilon:=1$ we get that
\begin{equation}\label{ler}
\left|\;\int_{\Omega\cap\{|u_n|\le r\}}  \Big(f(x, u_n) u_n -\vartheta F(x, u_n)\Big) \,dx\right| \le \left(2r^2 +p\delta(1) r^p +\vartheta r^2 +\vartheta\delta(1) r^p\right)|\Omega|=:C_r.
\end{equation}

Now, by~\eqref{fun1}, \eqref{diff1}, \eqref{normaalfamu}, \eqref{cnaoj} and~\eqref{ler}, we conclude that
\begin{equation}\label{c12}
\begin{split}
\vartheta I(u_n) - \langle I'(u_n), u_n\rangle & =\alpha\left(\frac{\vartheta}{2} -1\right) \int_\Omega |\nabla u_n|^2 \,dx +\frac12 \left(\frac{\vartheta}{2} -1\right) \int_{(0, 1)} [u_n]^2_s \, d\mu(s)\\
&\qquad +\left(\frac{\vartheta}{2} -1\right) \int_\Omega |u_n|^2 \,dx -\lambda\left(\frac{\vartheta}{2} -1\right) \int_\Omega |u_n|^2 \,dx\\
&\qquad +\int_\Omega \Big(f(x, u_n)u_n -\vartheta F(x, u_n)\Big) \,dx\\
&\ge \left(\frac{\vartheta}{2} -1\right) \|u_n\|^2_{\alpha, \mu} -\lambda\left(\frac{\vartheta}{2} -1\right)  \|u_n\|^2_{\alpha, \mu}\\
&\qquad-\left|\;\int_{\Omega\cap\{|u_n|\le r\}}  \Big(f(x, u_n) u_n -\vartheta F(x, u_n)\Big) \,dx\right| \\
&\ge(1-\lambda)\left(\frac{\vartheta}{2} -1\right) \|u_n\|^2_{\alpha, \mu} -C_r.
\end{split}
\end{equation}
Accordingly, by~\eqref{c11} and~\eqref{c12}, since~$\lambda<\lambda_1=1$ (recall~\eqref{primouguale1}) and~$\vartheta>2$, we infer that
\begin{equation}\label{diverso}
\|u_n\|^2_{\alpha, \mu} \le \frac{2 c_1(\vartheta +\|u_n\|_{\alpha, \mu}) +2C_r} {(1-\lambda)(\vartheta-2)},
\end{equation}
namely $u_n$ is bounded in $\mathcal{H}_{\alpha, \mu}(\Omega)$. This concludes the proof.
\end{proof}

We now show that any bounded sequence in~$\mathcal{H}_{\alpha, \mu}(\Omega)$ converges strongly to a function in~$\mathcal{H}_{\alpha, \mu}(\Omega)$. It is worth noting that, in this case, our result does not require the distinction between~$\lambda<\lambda_1$ and~$\lambda\ge\lambda_1$.

\begin{proposition}\label{PropPS2}
Let $\lambda\in\R$ and assume that $f$ satisfies~\eqref{AR1}. Let $u_n$ be a bounded sequence in~$\mathcal{H}_{\alpha, \mu}(\Omega)$ satisfying~\eqref{bdd2}.

Then, there exists $u\in\mathcal{H}_{\alpha, \mu}(\Omega)$ such that
\[
{\mbox{$u_n\to u$ in~$\mathcal{H}_{\alpha, \mu}(\Omega)$ as~$n\to+\infty$.}}
\]
\end{proposition}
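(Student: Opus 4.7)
The plan is to exploit the Hilbert space structure of $\mathcal{H}_{\alpha,\mu}(\Omega)$: extract a weakly convergent subsequence by boundedness, promote it to strong convergence in subcritical Lebesgue spaces via Proposition~\ref{embeddings}, and then upgrade this to strong convergence in $\mathcal{H}_{\alpha,\mu}(\Omega)$ by testing the approximate criticality condition~\eqref{bdd2} against $u_n - u$.

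First, since $u_n$ is bounded in the Hilbert space $\mathcal{H}_{\alpha,\mu}(\Omega)$ (whose norm, once $g\equiv 0 \equiv h$, is induced by the symmetric bilinear form
$$B(u,v):=\alpha\int_\Omega\nabla u\cdot\nabla v\,dx+\int_{(0,1)}\frac{c_{N,s}}{2}\iint_\Q\frac{(u(x)-u(y))(v(x)-v(y))}{|x-y|^{N+2s}}\,dx\,dy\,d\mu(s)+\int_\Omega uv\,dx,$$
which satisfies $B(u,u)=\|u\|_{\alpha,\mu}^2$), I extract a subsequence (not relabeled) with $u_n\rightharpoonup u$ in $\mathcal{H}_{\alpha,\mu}(\Omega)$ for some $u\in\mathcal{H}_{\alpha,\mu}(\Omega)$. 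By the compact embedding in Proposition~\ref{embeddings} (and a further subsequence if needed), $u_n\to u$ strongly in $L^q(\Omega)$ for every $q\in[1,2^*_{s_\sharp})$ and a.e.\ in $\Omega$.

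Next, assuming $u_n\neq u$ (otherwise there is nothing to prove), I set $v_n:=(u_n-u)/\|u_n-u\|_{\alpha,\mu}$, which has unit norm. Condition~\eqref{bdd2} gives $\langle I'(u_n),v_n\rangle\to 0$, and multiplying by the bounded factor $\|u_n-u\|_{\alpha,\mu}$ yields $\langle I'(u_n),u_n-u\rangle\to 0$. Using~\eqref{diff1}, this pairing splits as
$$\langle I'(u_n),u_n-u\rangle=B(u_n-u,u_n-u)+B(u,u_n-u)-\lambda\int_\Omega u_n(u_n-u)\,dx-\int_\Omega f(x,u_n)(u_n-u)\,dx.$$
The term $B(u,u_n-u)$ vanishes in the limit directly from weak convergence (it is the action of the fixed element $u$ in the Hilbert inner product against $u_n-u$), and $\lambda\int_\Omega u_n(u_n-u)\,dx\to 0$ by strong $L^2(\Omega)$ convergence.

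The key step is the nonlinear term: by~\eqref{AR1} one has $|f(\cdot,u_n)|\leq a_1+a_2|u_n|^{p-1}$ with $p\in(2,2^*_{s_\sharp})$, so $f(\cdot,u_n)$ is bounded in $L^{p/(p-1)}(\Omega)$, while $u_n-u\to 0$ strongly in $L^p(\Omega)$ thanks to the compact embedding. H\"older's inequality then yields $\int_\Omega f(x,u_n)(u_n-u)\,dx\to 0$. Combining the four identifications, $\|u_n-u\|_{\alpha,\mu}^2=B(u_n-u,u_n-u)\to 0$, which is the claimed strong convergence. The main obstacle in this argument is precisely the nonlinear term, and it is overcome by the subcritical matching $p<2^*_{s_\sharp}$ between the growth hypothesis~\eqref{AR1} and the compact embedding of Proposition~\ref{embeddings}; note that this step does not depend on the sign or size of $\lambda$, which is why the dichotomy $\lambda<\lambda_1$ versus $\lambda\geq\lambda_1$ is irrelevant here.
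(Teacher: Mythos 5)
Your proof is correct and follows essentially the same route as the paper's: extract a weakly convergent subsequence, use the compact embedding of Proposition~\ref{embeddings} to get strong $L^p$ convergence, pair $I'(u_n)$ against $u_n-u$, kill the nonlinear term by H\"older with the subcritical growth~\eqref{AR1}, and peel off the cross term using weak convergence in the Hilbert space $\mathcal{H}_{\alpha,\mu}(\Omega)$. The only cosmetic difference is that you split $B(u_n,u_n-u)=B(u_n-u,u_n-u)+B(u,u_n-u)$ directly and send the $\lambda$-term to zero immediately, while the paper writes out two limits (one from $\langle I'(u_n),u_n-u\rangle\to0$ and one from weak convergence against the fixed test $u$) and subtracts them, arriving at $\|u_n-u\|^2_{\alpha,\mu}-\lambda\|u_n-u\|^2_{L^2(\Omega)}\to0$; also, your normalization device $v_n=(u_n-u)/\|u_n-u\|_{\alpha,\mu}$ and the proviso ``$u_n\neq u$'' are unnecessary, since $|\langle I'(u_n),u_n-u\rangle|\le\sup_{\|v\|_{\alpha,\mu}=1}|\langle I'(u_n),v\rangle|\cdot\|u_n-u\|_{\alpha,\mu}$ with the second factor bounded already gives the limit.
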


\begin{proof}
Since $u_n$ is bounded in $ \mathcal{H}_{\alpha, \mu}(\Omega)$
(and~$ \mathcal{H}_{\alpha, \mu}(\Omega)$ is a Hilbert space, thanks
to~\cite[Corollary~2.2]{TUTTI}), by Proposition~\ref{embeddings} there exists $u\in\mathcal{H}_{\alpha, \mu}(\Omega)$ such that, up to subsequences, as~$n\to+\infty$,
\begin{align}\label{weakconv}
&u_n\rightharpoonup u \quad\mbox{ in }  \mathcal{H}_{\alpha, \mu}(\Omega),\\ \label{convlp}
{\mbox{and }} \quad &u_n\to u \quad\mbox{ in }  L^p(\Omega), \quad\mbox{ for any } p\in [1, 2^*_{s_\sharp}).
\end{align}

Now, by~\eqref{weakconv} and~\eqref{bdd2}, we have
\[
\lim_{n\to +\infty}\langle I'(u_n), u_n -u\rangle = 0,
\]
namely
\[
\begin{split}
\lim_{n\to +\infty}\Bigg(&\alpha \int_\Omega \nabla u_n\cdot\nabla (u_n-u) \, dx\\
&\quad+\int_{(0, 1)} \frac{c_{N, s}}{2} \iint_{\Q} \frac{(u_n(x)-u_n(y))\big((u_n-u)(x)-(u_n-u)(y)\big)}{|x-y|^{N+2s}} \,dx\,dy\, d\mu(s)\\
&\quad+(1-\lambda)\int_\Omega u_n(u_n-u)\, dx -\int_\Omega f(x, u_n) (u_n-u) \, dx\Bigg)=0.
\end{split}
\]

We observe that, by hypothesis~\eqref{AR1}, 
\[
\left|\int_\Omega f(x, u_n) (u_n-u) \, dx\right|\le \int_\Omega \big(
a_1 +a_2 |u_n|^{p-1}\big) |u_n - u| \, dx.
\]
Let $p'$ denote the conjugate exponent of $p$. Now, since $a_1 +a_2 |u_n|^{p-1} \in L^{p'}(\Omega)$, by~\eqref{convlp} we infer that
\[
\lim_{n\to +\infty} \int_\Omega f(x, u_n) (u_n-u) \, dx =0.
\]
Thus, we have that
\begin{equation}\label{lim1}
\begin{split}
\lim_{n\to +\infty}\Bigg(&\alpha \int_\Omega \nabla u_n\cdot\nabla (u_n-u) \, dx\\
&\quad+\int_{(0, 1)} \frac{c_{N, s}}{2} \iint_{\Q} \frac{\big(u_n(x)-u_n(y))((u_n-u)(x)-(u_n-u)(y)\big)}{|x-y|^{N+2s}} \,dx\,dy\, d\mu(s)\\
&\quad+(1-\lambda)\int_\Omega u_n(u_n-u)\, dx\Bigg)=0.
\end{split}
\end{equation}

On the other hand, by~\eqref{weakconv} and~\eqref{convlp},
it follows that
\begin{equation}\label{lim2}
\begin{split}
\lim_{n\to +\infty}\Bigg(&\alpha \int_\Omega \nabla u\cdot\nabla (u_n-u) \, dx\\
&\quad+\int_{(0, 1)} \frac{c_{N, s}}{2} \iint_{\Q} \frac{\big(u(x)-u(y))((u_n-u)(x)-(u_n-u)(y)\big)}{|x-y|^{N+2s}} \,dx\,dy\, d\mu(s)\\
&\quad+(1-\lambda)\int_\Omega u(u_n-u)\, dx\Bigg)=0.
\end{split}
\end{equation}
Combining~\eqref{lim1} and~\eqref{lim2}, we get
\[
\lim_{n\to +\infty}\Big(\|u_n -u\|^2_{\alpha, \mu} -\lambda\|u_n -u\|^2_{L^2(\Omega)}\Big)=0.
\]
Thus, by using~\eqref{convlp}, we infer that
\[
\lim_{n\to +\infty}\|u_n -u\|^2_{\alpha, \mu} =0,
\]
as desired.
\end{proof}

It remains to show that, if $\lambda <\lambda_1$, the functional~$I$ in~\eqref{fun1} fulfills the geometry of the Mountain Pass Theorem.

\begin{proposition}\label{geometriaMPAR1}
Let $\lambda<\lambda_1$.
Assume that $f$ satisfies assumptions~\eqref{AR1} and~\eqref{AR2}.

Then, there exist~$\rho>0$ and~$\beta>0$ such that, for any~$u\in \mathcal{H}_{\alpha,\mu}(\Omega)$ with~$\|u\|_{\alpha,\mu}=\rho$,
it holds that~$I(u)\geq \beta$.
\end{proposition}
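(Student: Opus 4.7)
The plan is to show that the quadratic part of $I$ dominates the nonlinear part on a small sphere of radius $\rho$ in $\mathcal{H}_{\alpha,\mu}(\Omega)$. Using the definition of the norm in~\eqref{normaalfamu} (with $g\equiv 0$, $h\equiv 0$), the functional~\eqref{fun1} can be rewritten as
\[
I(u) = \frac{1}{2}\|u\|_{\alpha,\mu}^2 - \frac{\lambda}{2}\|u\|_{L^2(\Omega)}^2 - \int_\Omega F(x,u)\,dx.
\]
The first step is to bound from below the quadratic piece $Q(u):= \frac{1}{2}\|u\|_{\alpha,\mu}^2 - \frac{\lambda}{2}\|u\|_{L^2(\Omega)}^2$. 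Recalling~\eqref{primouguale1}, the variational characterization of $\lambda_1=1$ as the smallest eigenvalue of $L_{\alpha,\mu}+\mathrm{Id}$ under $(\alpha,\mu)$-Neumann conditions, combined with the shape of the associated Rayleigh quotient (which equals $\|u\|_{\alpha,\mu}^2/\|u\|_{L^2(\Omega)}^2$ for the energy in~\eqref{fun1}), gives $\|u\|_{L^2(\Omega)}^2\le \|u\|_{\alpha,\mu}^2$ for every $u\in\mathcal H_{\alpha,\mu}(\Omega)$. Hence, if $\lambda\in[0,\lambda_1)$, we get $Q(u)\ge \frac{1-\lambda}{2}\|u\|_{\alpha,\mu}^2$, while for $\lambda<0$ the term $-\frac{\lambda}{2}\|u\|_{L^2(\Omega)}^2$ is nonnegative and $Q(u)\ge \frac{1}{2}\|u\|_{\alpha,\mu}^2$. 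In both cases, there exists $C_\lambda>0$ with
\[
Q(u)\ \ge\ C_\lambda\,\|u\|_{\alpha,\mu}^2 \qquad\text{for every } u\in\mathcal H_{\alpha,\mu}(\Omega).
\]

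Next I would control the nonlinear piece via Lemma~\ref{SV12}. Given any $\varepsilon>0$, estimate~\eqref{F1} yields
\[
\int_\Omega F(x,u)\,dx\ \le\ \varepsilon\,\|u\|_{L^2(\Omega)}^2 + \delta(\varepsilon)\,\|u\|_{L^p(\Omega)}^p.
\]
Since $p\in(2,2^*_{s_\sharp})$, Proposition~\ref{embeddings} provides constants $K_2,K_p>0$ (depending on $N,\Omega,s_\sharp$) such that $\|u\|_{L^2(\Omega)}\le K_2\|u\|_{\alpha,\mu}$ and $\|u\|_{L^p(\Omega)}\le K_p\|u\|_{\alpha,\mu}$, leading to
\[
\int_\Omega F(x,u)\,dx\ \le\ \varepsilon K_2^2\,\|u\|_{\alpha,\mu}^2 + \delta(\varepsilon)\,K_p^p\,\|u\|_{\alpha,\mu}^p.
\]

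Combining the two estimates we get
\[
I(u)\ \ge\ \bigl(C_\lambda - \varepsilon K_2^2\bigr)\|u\|_{\alpha,\mu}^2 - \delta(\varepsilon)K_p^p\,\|u\|_{\alpha,\mu}^p.
\]
Now fix $\varepsilon>0$ so small that $\kappa:=C_\lambda-\varepsilon K_2^2>0$; then, for $\|u\|_{\alpha,\mu}=\rho$,
\[
I(u)\ \ge\ \rho^2\bigl(\kappa-\delta(\varepsilon)K_p^p\,\rho^{p-2}\bigr).
\]
Since $p>2$, choosing $\rho>0$ small enough that $\delta(\varepsilon)K_p^p\,\rho^{p-2}\le \kappa/2$ gives $I(u)\ge \beta:=\frac{\kappa}{2}\rho^2>0$, which is the desired conclusion. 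The only nontrivial step is the coercivity of $Q$, which rests on the Rayleigh-quotient characterization of $\lambda_1=1$; the rest is a direct application of Lemma~\ref{SV12} together with the continuous embedding from Proposition~\ref{embeddings}.
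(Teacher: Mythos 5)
Your proof is correct and follows essentially the same strategy as the paper: split $I$ into a quadratic part and a nonlinear part, use $\|u\|_{L^2(\Omega)}^2\le\|u\|_{\alpha,\mu}^2$ to bound the quadratic part from below, control $\int_\Omega F(x,u)\,dx$ via Lemma~\ref{SV12} and Proposition~\ref{embeddings}, and then shrink $\rho$ so that the quadratic term wins. The paper gets the inequality $\|u\|_{L^2(\Omega)}^2\le\|u\|_{\alpha,\mu}^2$ directly from the definition of the norm in~\eqref{normaalfamu}, rather than via the Rayleigh-quotient characterization of $\lambda_1=1$; your separate treatment of $\lambda<0$ versus $\lambda\in[0,\lambda_1)$ is a minor stylistic variant that spells out a case the paper handles implicitly through its choice of $\varepsilon$.
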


\begin{proof}
Let $u\in \mathcal{H}_{\alpha,\mu}(\Omega)$. By~\eqref{fun1}, \eqref{F1}, \eqref{normaalfamu} and Proposition~\ref{embeddings}, we infer that for any $\varepsilon>0$,
\begin{equation}\label{bvcxzaer2tyw}
\begin{split}
I(u) & \ge \frac12 \|u\|^2_{\alpha, \mu} -\frac{\lambda}{2} \|u\|^2_{L^2(\Omega)} -\varepsilon\|u\|^2_{L^2(\Omega)} - \delta(\varepsilon)\|u\|^p_{L^p(\Omega)}\\
& \ge\left(\frac{1-\lambda}{2} -\varepsilon\right) \|u\|^2_{\alpha, \mu} -\delta(\varepsilon)\|u\|^p_{L^p(\Omega)}\\
&\ge \|u\|^2_{\alpha, \mu} \left(\frac{1-\lambda}{2}-\varepsilon -\delta(\varepsilon) c  \|u\|^{p-2}_{\alpha, \mu}\right),
\end{split}
\end{equation}
where $c=c(N, \Omega, s_\sharp)>0$.

Now, since~$\lambda<\lambda_1=1$ (recall~\eqref{primouguale1}), we can take 
\[
\varepsilon\in\left(0, \frac{1-\lambda}{2}\right).
\]
Moreover, we recall that~$p\in (2, 2^*_{s_\sharp})$ (thanks to the
assumption in~\eqref{AR1}) and so we take
\[
\rho \in\left(0, \left(\frac{1-\lambda-2\varepsilon}{2\delta(\varepsilon) c} \right)^{\frac{1}{p-2}}\right).
\]
In this way,
\[
\frac{1-\lambda}{2}-\varepsilon -\delta(\varepsilon) c\rho^{p-2}>0, 
\]
and therefore,
plugging this information into~\eqref{bvcxzaer2tyw}, we gather that
\[
\inf_{\substack{u\in \mathcal{H}_{\alpha,\mu}(\Omega)\\ \|u\|_{\alpha, \mu}=\rho}} I(u)\ge \rho^2 \left(\frac{1-\lambda}{2}-\varepsilon -\delta(\varepsilon) c\rho^{p-2}\right) =:\beta >0,
\]
as desired.
\end{proof}

\begin{proposition}\label{geometriaMPAR2}
Let $\lambda<\lambda_1$ and assume that $f$ satisfies~\eqref{AR1}, \eqref{AR2} and~\eqref{AR4}.

Then, there exists $e\in \mathcal{H}_{\alpha,\mu}(\Omega)$ such that~$e\geq 0$ a.e. in~$\R^N$, $\|e\|_{\alpha,\mu}>\rho$ and~$I(e)<\beta$, where~$\rho$ and~$\beta$ are given in Proposition~\ref{geometriaMPAR1}.
\end{proposition}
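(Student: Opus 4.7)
The plan is to exhibit $e$ as a large scalar multiple of a fixed nonnegative test function, exploiting the superlinear lower bound $F(x,t) \geq a_3|t|^{\widetilde\vartheta} - a_4(x)$ from~\eqref{AR4}, whose exponent $\widetilde\vartheta > 2$ will dominate the purely quadratic contribution to $I$ coming from the Hilbert-norm and linear eigenvalue terms.

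Concretely, I would first pick some $u_0 \in \mathcal{H}_{\alpha,\mu}(\Omega)$ with $u_0 \geq 0$ a.e. in $\R^N$ and $\|u_0\|_{L^{\widetilde\vartheta}(\Omega)} > 0$. A smooth nonnegative bump function in $C^\infty_c(\R^N)$ whose support meets $\Omega$ in positive measure is enough: it has finite $L^2(\Omega)$ and $H^1(\Omega)$ norms, and its Gagliardo seminorms $[u_0]_s$ are bounded uniformly in $s \in (0,1)$, so since $\mu$ is finite, $u_0$ belongs to every component space appearing in~\eqref{HALPHAMU}. Then I would set $e := t u_0$ for $t > 0$ to be chosen.

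Rewriting the functional~\eqref{fun1} compactly in terms of the norm~\eqref{normaalfamu} (recalling $h \equiv 0$, $g \equiv 0$) as
\[
I(u) = \tfrac{1}{2}\|u\|^2_{\alpha,\mu} - \tfrac{\lambda}{2}\|u\|^2_{L^2(\Omega)} - \int_\Omega F(x,u)\,dx,
\]
and applying~\eqref{AR4} pointwise (valid for all real arguments since the lower bound uses $|t|^{\widetilde\vartheta}$), I obtain
\[
I(tu_0) \leq \tfrac{t^2}{2}\bigl(1 + |\lambda|\bigr)\|u_0\|^2_{\alpha,\mu} - a_3\, t^{\widetilde\vartheta}\|u_0\|^{\widetilde\vartheta}_{L^{\widetilde\vartheta}(\Omega)} + \|a_4\|_{L^1(\Omega)},
\]
where I have used $\|u_0\|_{L^2(\Omega)} \leq \|u_0\|_{\alpha,\mu}$ to absorb the eigenvalue term regardless of the sign of $\lambda$. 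Since $\widetilde\vartheta > 2$ and the coefficient of $t^{\widetilde\vartheta}$ is strictly positive, the right-hand side tends to $-\infty$ as $t \to +\infty$. It therefore suffices to choose $t$ so large that simultaneously $t\|u_0\|_{\alpha,\mu} > \rho$ and the right-hand side falls below $\beta$ (in fact, below $0$), and to set $e := t u_0$.

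There is no serious obstacle: the argument is standard Mountain Pass bookkeeping, and in particular the restriction $\lambda < \lambda_1$ is not needed for this half of the geometry (it was only used in Proposition~\ref{geometriaMPAR1}). The only minor point of care is to pick $u_0$ inside the correct functional space, which is immediate for a smooth nonnegative compactly supported representative, and to handle the sign of $\lambda$ uniformly, which is achieved by the $(1 + |\lambda|)$ factor above.
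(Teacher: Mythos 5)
Your proposal is correct and follows essentially the same route as the paper: fix a nonnegative $u_0\in\mathcal{H}_{\alpha,\mu}(\Omega)$, consider $e=tu_0$, use~\eqref{AR4} to get a lower bound $\int_\Omega F(x,tu_0)\,dx\geq a_3t^{\widetilde\vartheta}\|u_0\|^{\widetilde\vartheta}_{L^{\widetilde\vartheta}(\Omega)}-\|a_4\|_{L^1(\Omega)}$, and let $t\to+\infty$ exploiting $\widetilde\vartheta>2$. The only cosmetic difference is that you absorb the eigenvalue term via $(1+|\lambda|)$ while the paper uses the marginally sharper $(1+\max\{0,-\lambda\})$; both are valid, and your side remarks (the choice of a nonnegative bump with $\|u_0\|_{L^{\widetilde\vartheta}(\Omega)}>0$, and the observation that $\lambda<\lambda_1$ is not actually used here) are accurate.
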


\begin{proof}
We fix $u_0\in \mathcal{H}_{\alpha,\mu}(\Omega)$ such that $\|u_0\|_{\alpha,\mu}=1$ and $u_0\geq 0$ a.e. in $\R^N$. Moreover, let $t>0$. We point out that
\begin{alignat*}{2}
&\mbox{either $ \ -\displaystyle\frac{\lambda t^2}{2}\|u_0\|_{L^2(\Omega)}^2\le 0$} &&\mbox{ \ if $\lambda\in [0,\lambda_1)$,}\\
&\mbox{or \qquad$-\displaystyle\frac{\lambda t^2}{2}\|u_0\|_{L^2(\Omega)}^2\le -\dfrac{\lambda t^2}{2}\|u_0\|_{\alpha,\mu}^2$} &&\mbox{ \ if $\lambda<0$.}
\end{alignat*}
In light of this, \eqref{fun1}, \eqref{normaalfamu} and~\eqref{AR4}, we infer that
\[
\begin{aligned}
I(tu_0)&=\frac{t^2}{2}\|u_0\|_{\alpha,\mu}^2-\frac{\lambda t^2}{2}\|u_0\|_{L^2(\Omega)}^2
-\int_\Omega F(x,tu_0)\,dx  \\
&\leq \frac{t^2}{2} \big(1+\max\{0,-\lambda\}\big)\|u_0\|_{\alpha,\mu}^2
-a_3t^{\widetilde{\vartheta}}\|u_0\|_{L^{\widetilde{\vartheta}}(\Omega)}^{\widetilde{\vartheta}}
+\int_\Omega a_4(x)\,dx.
\end{aligned}
\]
Since $\widetilde{\vartheta}>2$, passing to the limit we get
\[
\lim_{t\to+\infty}I(tu_0)=-\infty.
\]
Therefore, taking $e:=tu_0$ for $t$ large enough we have the desired result.
\end{proof}

We are now in the position of completing the proof of Theorem~\ref{MPT}.

\begin{proof}[Proof of Theorem~\ref{MPT}]
When $\lambda<\lambda_1$, the validity of the Palais-Smale condition at any level~$c\in\R$ is provided by Propositions~\ref{PropPS1} and~\ref{PropPS2}. On the other hand, from Propositions~\ref{geometriaMPAR1} and~\ref{geometriaMPAR2} we deduce that the functional~$I$ satisfies the geometric properties of the Mountain Pass Theorem.

Thus, we can apply the Mountain Pass Theorem (see e.g.~\cite[Theorem~6.1]{struwe}) and conclude that there exists a critical point $u\in\mathcal{H}_{\alpha,\mu}(\Omega)$ of $I$ such that
\[
I(u)\geq \beta >0=I(0).
\]
Hence, in particular, $u\neq 0$. This concludes the proof.
\end{proof}

\section{Existence of a Linking solution and proof of Theorem~\ref{LINK}}\label{sec-link}

In this section we address the study of problem~\eqref{problemasottocritico} when $\lambda\ge\lambda_1$ and we use the Linking Theorem to prove the existence of the nontrivial solution claimed in Theorem~\ref{LINK}.

We notice that, since $\lambda\ge \lambda_1$, we can assume that~$\lambda$ falls between two consecutive eigenvalues of the operator~$L_{\alpha,\mu}+Id$, namely
\begin{equation}\label{successivi}
\lambda\in [\lambda_i, \lambda_{i+1})\quad\mbox{ for some } i\in\N\setminus\{0\}.
\end{equation}

In addition, we point out that to develop the Linking argument we need to cast our problem in a suitable space which admits a direct sum decomposition. In~\cite[Theorem~1.5]{TUTTI}, it has been shown that the sequence~$e_k$ of eigenfunctions of the operator~$L_{\alpha,\mu}$
determines a complete orthogonal system in $L^2(\Omega)$. However, since~$e_k$ is an orthogonal system in $\mathcal{H}_{\alpha, \mu}(\Omega)$ but the completeness is not guaranteed,  our choice cannot fall on the space~$\mathcal{H}_{\alpha, \mu}(\Omega)$. For this reason, we restrict ourselves to the subspace~$\widetilde{\mathcal{H}}_{\alpha, \mu}(\Omega)$ introduced in~\eqref{tildeacca} (recall, indeed, that
thanks to Proposition~\ref{sistemacompleto}
the sequence~$e_k$ provides a complete orthogonal system in~$\widetilde{\mathcal H}_{0, \mu}(\Omega)$).

Due to this choice, we rephrase the definition of weak solution for problem~\eqref{problemasottocritico} as follows:

\begin{definition}\label{wdlinking}
We say that $u\in \widetilde{\mathcal{H}}_{\alpha, \mu}(\Omega)$ is a weak solution of problem~\eqref{problemasottocritico}
if, for any~$v\in\widetilde{\mathcal{H}}_{\alpha, \mu}(\Omega)$,
\begin{equation}\label{wslinking}
\begin{split}
\alpha \int_\Omega \nabla u\cdot\nabla v\, dx &+\int_{(0, 1)} \frac{c_{N, s}}{2} \iint_{\Q} \frac{(u(x)-u(y))(v(x)-v(y))}{|x-y|^{N+2s}} \,dx\,dy\, d\mu(s)\\
&+\int_\Omega uv\, dx = \lambda\int_\Omega uv\, dx+ \int_\Omega f(x, u) v \, dx.
\end{split}
\end{equation}
\end{definition}

Moreover, if $f$ satisfies assumption~\eqref{AR1}, then the functional $\mathcal I:\widetilde{\mathcal{H}}_{\alpha, \mu}(\Omega)\to\R$ defined as
\begin{equation}\label{funlink}
\begin{split}
\mathcal I(u):&= \frac{\alpha}{2}\int_\Omega |\nabla u|^2 \,dx +\int_{(0, 1)} \frac{c_{N, s}}{4} \iint_{\Q} \frac{|u(x)-u(y)|^2}{|x-y|^{N+2s}} \,dx\,dy\, d\mu(s)\\
&\qquad+\frac12 \int_{\Omega} |u|^2 \, dx -\frac{\lambda}{2} \int_{\Omega} |u|^2 \, dx - \int_\Omega F(x, u) \, dx
\end{split}
\end{equation}
is of class $C^1$ and satisfies, for any~$u$, $v\in \widetilde{\mathcal{H}}_{\alpha, \mu}(\Omega)$,
\begin{equation}\label{diff2}
\begin{split}
\langle \mathcal I'(u), v\rangle &=\alpha \int_\Omega \nabla u\cdot\nabla v\, dx +\int_{(0, 1)} \frac{c_{N, s}}{2} \iint_{\Q} \frac{(u(x)-u(y))(v(x)-v(y))}{|x-y|^{N+2s}} \,dx\,dy\, d\mu(s)\\
&\qquad+\int_\Omega uv\, dx - \lambda\int_\Omega uv\, dx -\int_\Omega f(x, u) v \, dx.
\end{split}
\end{equation}

Moreover, the critical points of $\mathcal I$ are weak solutions of problem~\eqref{problemasottocritico} (see e.g.~\cite[Proposition~4.2]{TUTTI}).

We now show that, if~\eqref{successivi} holds true and~$u_n$ approaches a suitable energy level, then~$u_n$ is bounded.  The wording of the next result closely resembles that of Proposition~\ref{PropPS1},  but some technical difficulties arise in the proof. To be more precise, in light of the assumption~\eqref{successivi},  we have that inequality~\eqref{diverso} can not be deduced by~\eqref{c12}.
This requires a careful approach to this case.

\begin{proposition}\label{boundedlinking}
Let $\lambda$ be as in~\eqref{successivi} and suppose that~$f$ satisfies~\eqref{AR1}, \eqref{AR2}, \eqref{AR3} and~\eqref{AR4}. Let~$c\in\R$ and let~$u_n$ be a sequence in $\widetilde{\mathcal{H}}_{\alpha, \mu}(\Omega)$ such that
\begin{equation}\label{bdd12}
\lim_{n\to +\infty} \mathcal I(u_n)=c
\end{equation}
and
\begin{equation}\label{bdd22}
\lim_{n\to +\infty} \sup_{\substack{v\in \widetilde{\mathcal{H}}_{\alpha, \mu}(\Omega)\\ \|v\|_{\alpha, \mu}=1}} |\langle \mathcal I'(u_n), v\rangle| =0.
\end{equation}

Then, $u_n$ is bounded in~$\widetilde{\mathcal{H}}_{\alpha, \mu}(\Omega)$.
\end{proposition}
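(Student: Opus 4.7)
The plan is to start as in Proposition~\ref{PropPS1}: from~\eqref{bdd12} and~\eqref{bdd22} one extracts $|\mathcal{I}(u_n)|\leq c_1$ and $|\langle \mathcal{I}'(u_n), u_n\rangle|\leq c_1\|u_n\|_{\alpha,\mu}$, so
\[
\vartheta \mathcal{I}(u_n) - \langle \mathcal{I}'(u_n), u_n\rangle \leq c_1\bigl(\vartheta + \|u_n\|_{\alpha,\mu}\bigr).
\]
Expanding the left-hand side and invoking~\eqref{AR3} together with Lemma~\ref{SV12} (exactly as in the passage from~\eqref{cnaoj} to~\eqref{c12}) yields
\[
\left(\frac{\vartheta}{2} - 1\right) \Bigl(\|u_n\|_{\alpha,\mu}^2 - \lambda \|u_n\|_{L^2(\Omega)}^2\Bigr) \leq c_1\bigl(\vartheta + \|u_n\|_{\alpha,\mu}\bigr) + C_r.
\]
In Proposition~\ref{PropPS1} one concluded by using $\lambda<\lambda_1=1$ to make the bracket coercive; here it can be negative, and the whole difficulty is to recover control. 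The plan is to do so by a spectral splitting combined with assumption~\eqref{AR4} (not needed in Proposition~\ref{PropPS1}).

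Next, I would invoke the complete orthogonal system $\{e_k\}$ of eigenfunctions of $L_{\alpha,\mu}+\mathrm{Id}$ in $\widetilde{\mathcal{H}}_{\alpha,\mu}(\Omega)$, provided by Proposition~\ref{sistemacompleto} when $\alpha=0$ and by the discussion following~\eqref{tildeacca} in the remaining cases. These eigenfunctions are also orthogonal in $L^2(\Omega)$ and satisfy $\|e_k\|_{\alpha,\mu}^2 = \lambda_k \|e_k\|_{L^2(\Omega)}^2$. With $i$ as in~\eqref{successivi}, split
\[
u_n = v_n + w_n, \qquad v_n \in \mathrm{span}\{e_1,\dots,e_i\}, \qquad w_n \in \overline{\mathrm{span}}\{e_k : k\geq i+1\}.
\]
A direct computation on the series expansion, using $\lambda_k\geq \lambda_{i+1}$ when $k\geq i+1$, yields
\[
\|u_n\|_{\alpha,\mu}^2 - \lambda\|u_n\|_{L^2(\Omega)}^2 \geq \frac{\lambda_{i+1}-\lambda}{\lambda_{i+1}}\|w_n\|_{\alpha,\mu}^2 - (\lambda - \lambda_1)\|v_n\|_{L^2(\Omega)}^2,
\]
while the same decomposition gives $\|v_n\|_{\alpha,\mu}^2 \leq \lambda_i\|v_n\|_{L^2(\Omega)}^2$. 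Since $\lambda_{i+1}>\lambda$, combining these two estimates with the previous display produces
\[
\|u_n\|_{\alpha,\mu}^2 = \|v_n\|_{\alpha,\mu}^2 + \|w_n\|_{\alpha,\mu}^2 \leq C\bigl(1 + \|u_n\|_{\alpha,\mu} + \|v_n\|_{L^2(\Omega)}^2\bigr).
\]

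It remains to absorb the term $\|v_n\|_{L^2(\Omega)}^2$, and here~\eqref{AR4} enters decisively. From $\mathcal{I}(u_n)\leq c_1$ one reads $\int_\Omega F(x,u_n)\,dx \leq c_1 + \tfrac{1}{2}\|u_n\|_{\alpha,\mu}^2$, and feeding~\eqref{AR4} into the left-hand side, together with H\"older's inequality ($\widetilde{\vartheta}>2$, $|\Omega|<+\infty$), gives
\[
\|u_n\|_{L^2(\Omega)}^{\widetilde{\vartheta}} \leq C\bigl(1 + \|u_n\|_{\alpha,\mu}^2\bigr).
\]
By $L^2$-orthogonality $\|v_n\|_{L^2(\Omega)}\leq \|u_n\|_{L^2(\Omega)}$, so $\|v_n\|_{L^2(\Omega)}^2 \leq C\bigl(1 + \|u_n\|_{\alpha,\mu}^{4/\widetilde{\vartheta}}\bigr)$; substituting back,
\[
\|u_n\|_{\alpha,\mu}^2 \leq C\bigl(1 + \|u_n\|_{\alpha,\mu} + \|u_n\|_{\alpha,\mu}^{4/\widetilde{\vartheta}}\bigr),
\]
and since $4/\widetilde{\vartheta}<2$ boundedness of $\{u_n\}$ follows at once. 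The main obstacle is precisely this control of $v_n$: the Ambrosetti--Rabinowitz condition~\eqref{AR3} imposes no sign condition on the finite-dimensional negative-eigenspace component, so the stronger growth encoded in~\eqref{AR4} (and in particular $\widetilde{\vartheta}>2$) is what makes the final absorption possible.
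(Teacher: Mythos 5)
Your proof is correct, but it follows a genuinely different route from the paper's. The paper avoids any spectral splitting: it picks an auxiliary exponent $k\in(2,\vartheta)$ and works with $k\,\mathcal I(u_n)-\langle \mathcal I'(u_n),u_n\rangle$. The slack $\vartheta-k>0$ produces a surplus term $(\vartheta-k)\int_\Omega F(x,u_n)\,dx$ in the lower bound, which via~\eqref{AR4} becomes $(\vartheta-k)a_3\|u_n\|_{L^{\widetilde\vartheta}(\Omega)}^{\widetilde\vartheta}$; a Young inequality then absorbs the troublesome $\lambda\|u_n\|_{L^2(\Omega)}^2$ term into this surplus, yielding the same coercive conclusion as in Proposition~\ref{PropPS1} but with $c_2$ in place of $C_r$. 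You instead use $\vartheta$ directly (so there is no surplus $F$-term in the combination $\vartheta\,\mathcal I(u_n)-\langle \mathcal I'(u_n),u_n\rangle$) and compensate by bringing in a \emph{second} piece of information: the sequence bound $|\mathcal I(u_n)|\le c_1$ alone (note you want the lower bound $\mathcal I(u_n)\ge -c_1$, not the upper one as written; both follow from~\eqref{bdd12}) already gives $\int_\Omega F(x,u_n)\,dx\le c_1+\tfrac12\|u_n\|_{\alpha,\mu}^2$, and then~\eqref{AR4} plus H\"older give $\|u_n\|_{L^2(\Omega)}^2\le C(1+\|u_n\|_{\alpha,\mu}^{4/\widetilde\vartheta})$ with $4/\widetilde\vartheta<2$, which closes the absorption by a sublinear power.

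One remark on economy: the spectral decomposition $u_n=v_n+w_n$ and the estimates~\eqref{poincHi}--\eqref{poincHiperp} are not actually needed in your argument. Once you have $\|u_n\|_{L^2(\Omega)}^2\le C(1+\|u_n\|_{\alpha,\mu}^{4/\widetilde\vartheta})$, you can feed it straight into
\[
\left(\tfrac{\vartheta}{2}-1\right)\bigl(\|u_n\|_{\alpha,\mu}^2-\lambda\|u_n\|_{L^2(\Omega)}^2\bigr)\le c_1\bigl(\vartheta+\|u_n\|_{\alpha,\mu}\bigr)+C_r
\]
to get $\|u_n\|_{\alpha,\mu}^2\le C\bigl(1+\|u_n\|_{\alpha,\mu}+\|u_n\|_{\alpha,\mu}^{4/\widetilde\vartheta}\bigr)$ directly, without ever restricting to $v_n$. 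So the splitting is harmless but superfluous here; it becomes essential in the geometric Propositions~\ref{LGEO1} and~\ref{LGEO2}, not in the Palais--Smale boundedness step. Both the paper's $k<\vartheta$ trick and your direct use of the energy bound together with~\eqref{AR4} are valid ways to handle the loss of coercivity when $\lambda\ge\lambda_1$; the paper's is a single self-contained inequality, while yours separates the two pieces of information in~\eqref{bdd12} and~\eqref{bdd22} and uses each once.
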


\begin{proof}
Let $k\in (2, \vartheta)$. By~\eqref{bdd12} and~\eqref{bdd22} we infer that there exists a constant $c_1>0$ such that for any $n\in\N$,
\begin{equation}\label{ndkjsc}
k \mathcal I(u_n) -\langle \mathcal I'(u_n), u_n\rangle \le c_1 (k +\|u_n\|_{\alpha, \mu}).
\end{equation}
Let $r$ be as in assumption~\eqref{AR3} and notice that
\begin{equation}\label{lnsdnv}
\int_{\Omega\cap\{|u_n|> r\}} \Big(f(x, u_n) u_n -k F(x, u_n)\Big)\,dx >0.
\end{equation}
Moreover, if $r>0$, then by using Lemma~\ref{SV12} with~$\varepsilon:=1$ we get that
\begin{equation}\label{ler2}
\left|\;\int_{\Omega\cap\{|u_n|\le r\}}  \big(f(x, u_n) u_n -\vartheta F(x, u_n)\big) \,dx\right| \le \left(2r^2 +p\delta(1) r^p + \vartheta r^2 +\vartheta\delta(1) r^p\right)|\Omega|=:C_r.
\end{equation}
Hence, by~\eqref{funlink}, \eqref{diff2},
\eqref{lnsdnv} and~\eqref{ler2}, we infer that
\[
\begin{split}
k\, \mathcal I(u_n) -\langle \mathcal I'(u_n), u_n\rangle &\ge\left(\frac{k}{2}-1\right)\|u_n\|^2_{\alpha, \mu}-\lambda\left(\frac{k}{2}-1\right)\|u_n\|^2_{L^2(\Omega)} \\
&\qquad+\int_\Omega \big(f(x, u_n) u_n -k F(x, u_n)\big) \,dx\\
& \ge\left(\frac{k}{2}-1\right)\|u_n\|^2_{\alpha, \mu}-\lambda\left(\frac{k}{2}-1\right)\|u_n\|^2_{L^2(\Omega)}\\
&\qquad - \left|\;\int_{\Omega\cap\{|u_n|\le r\}}  \Big(f(x, u_n) u_n -\vartheta F(x, u_n)\Big) \,dx\right| +(\vartheta -k)\int_\Omega F(x, u_n) \,dx\\
&\ge\left(\frac{k}{2}-1\right)\|u_n\|^2_{\alpha, \mu}-\lambda\left(\frac{k}{2}-1\right)\|u_n\|^2_{L^2(\Omega)}\\
&\qquad +(\vartheta -k)\int_\Omega F(x, u_n) \,dx- C_r.
\end{split}
\]

Let now~$\widetilde\vartheta$ be as in hypothesis~\eqref{AR4}. We notice that, by using both H\"older and Young inequalities,
for any~$\varepsilon>0$ and for any~$u\in \widetilde{\mathcal{H}}_{\alpha, \mu}(\Omega)$,
\[
\|u\|^2_{L^2(\Omega)}\le \frac{2\,\varepsilon^{\widetilde\vartheta/2}}{\widetilde\vartheta} \|u\|^{\widetilde\vartheta}_{L^{\widetilde\vartheta}(\Omega)} +\frac{\widetilde\vartheta -2}{\widetilde\vartheta\,\varepsilon^{\widetilde\vartheta/{(\widetilde\vartheta -2)}}} |\Omega|.
\]
In light of this, recalling that $\vartheta>k$ and that~\eqref{AR4} holds, we infer that
\[
\begin{split}
k\, \mathcal I(u_n) -\langle \mathcal I'(u_n), u_n\rangle&\ge\left(\frac{k}{2}-1\right)\|u_n\|^2_{\alpha, \mu}-\lambda\left(\frac{k}{2}-1\right)\|u_n\|^2_{L^2(\Omega)}\\
&\qquad +(\vartheta -k) a_3 \|u_n\|^{\widetilde\vartheta}_{L^{\widetilde\vartheta}(\Omega)} +(\vartheta - k)\int_\Omega a_4(x) \,dx -C_r\\
&\ge\left(\frac{k}{2}-1\right)\|u_n\|^2_{\alpha, \mu} + \left((\vartheta-k)a_3-\frac{2\,\lambda}{\widetilde\vartheta}\,\varepsilon^{\widetilde\vartheta/2}\left(\frac{k}{2}-1 \right)\right)\|u_n\|^{\widetilde\vartheta}_{L^{\widetilde\vartheta}(\Omega)} -c_2,
\end{split}
\]
where
\[
c_2:= C_r +\frac{|\Omega|(\widetilde\vartheta-2)}{\widetilde\vartheta\,\varepsilon^{\widetilde\vartheta/(\widetilde\vartheta-2)}} +(\vartheta - k)\int_\Omega a_4(x) \,dx. 
\]

Now, by taking
\[
\varepsilon\in\left(0, \left(\frac{a_3\widetilde\vartheta (\vartheta-k)}{\lambda(k-2)}\right)^{2/\widetilde\vartheta}\right),
\]
we get
\[
k\, \mathcal I(u_n) -\langle \mathcal I'(u_n), u_n\rangle\ge \left(\frac{k}{2}-1\right)\|u_n\|^2_{\alpha, \mu} -c_2.
\]
This, together with~\eqref{ndkjsc}, entails that 
\[
\|u_n\|^2_{\alpha, \mu}\le \frac{2c_1 (k +\|u_n\|_{\alpha, \mu}) +2c_2}{k-2},
\]
namely $u_n$ is bounded in $\widetilde{\mathcal{H}}_{\alpha, \mu}(\Omega)$, as desired.
\end{proof}

The next result states that a bounded sequence in~$\widetilde{\mathcal{H}}_{\alpha, \mu}(\Omega)$ converges strongly
to a function in~$\widetilde{\mathcal{H}}_{\alpha, \mu}(\Omega)$.
We point out that this result holds for any value of~$\lambda$, without
distinguishing between the cases~$\lambda<\lambda_1$ and~$\lambda\ge\lambda_1$.

\begin{proposition}\label{PropPS22}
Let $\lambda\in\R$ and assume that $f$ satisfies~\eqref{AR1}. Let~$u_n$ be a bounded sequence in~$\widetilde{\mathcal{H}}_{\alpha, \mu}(\Omega)$ satisfying~\eqref{bdd22}.

Then, there exists $u\in\widetilde{\mathcal{H}}_{\alpha, \mu}(\Omega)$ such that
\[
{\mbox{$u_n\to u$ in~$\widetilde{\mathcal{H}}_{\alpha, \mu}(\Omega)$
as~$n\to+\infty$.}}
\]
\end{proposition}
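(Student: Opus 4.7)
The plan is to adapt the argument of Proposition~\ref{PropPS2} to the subspace setting, with the key additional ingredient being that the weak limit of $u_n$ remains in $\widetilde{\mathcal H}_{\alpha,\mu}(\Omega)$. First I would observe that $\widetilde{\mathcal H}_{\alpha,\mu}(\Omega)$ is a Hilbert space: for $\mu\equiv 0$ it coincides with $H^1(\Omega)$, for $\alpha\neq 0$ and $\mu\not\equiv 0$ it is the intersection $L^2(\R^N)\cap\mathcal H_{\alpha,\mu}(\Omega)$, which is a closed subspace of the Hilbert space $\mathcal H_{\alpha,\mu}(\Omega)$, and for $\alpha=0$ it is $\widetilde{\mathcal H}_\mu(\Omega)$, which is closed under weak convergence by Proposition~\ref{chiusoconvergenzadebole}. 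Consequently, up to a subsequence, I can extract a weak limit $u\in\widetilde{\mathcal H}_{\alpha,\mu}(\Omega)$ of $u_n$. By the compact embedding of Proposition~\ref{embeddings} (which holds a fortiori on the subspace, since the norm is the same), I also get strong convergence $u_n\to u$ in $L^p(\Omega)$ for every $p\in[1,2^*_{s_\sharp})$.

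Next, since $\widetilde{\mathcal H}_{\alpha,\mu}(\Omega)$ is a linear subspace, the difference $u_n-u$ is an admissible test function, and condition~\eqref{bdd22} combined with the boundedness of $u_n$ yields
\[
\lim_{n\to+\infty}\langle\mathcal I'(u_n),u_n-u\rangle=0.
\]
Plugging $v=u_n-u$ into~\eqref{diff2} and using the growth condition~\eqref{AR1} together with H\"older's inequality and the strong $L^p$ convergence, the nonlinear term satisfies
\[
\left|\int_\Omega f(x,u_n)(u_n-u)\,dx\right|\leq \bigl\| a_1+a_2|u_n|^{p-1}\bigr\|_{L^{p'}(\Omega)}\,\|u_n-u\|_{L^p(\Omega)}\longrightarrow 0.
\]

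The argument is then concluded exactly as in Proposition~\ref{PropPS2}: one separately expands
\[
\langle\mathcal I'(u_n)-\mathcal I'(u),u_n-u\rangle
\]
using the weak convergence of $u_n$ in $\widetilde{\mathcal H}_{\alpha,\mu}(\Omega)$, which (by linearity of the bilinear form in~\eqref{diff2}) kills the contribution coming from $\mathcal I'(u)$ tested against $u_n-u$. Combined with the strong $L^2$ convergence of $u_n$ to $u$ (another consequence of the compact embedding) to dispose of the lower-order term $\lambda\int_\Omega u_n(u_n-u)\,dx$, one arrives at
\[
\lim_{n\to+\infty}\|u_n-u\|_{\alpha,\mu}^2=0,
\]
which is the desired conclusion.

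I do not expect any serious obstacle here, since the main technical point---namely, that the weak limit of a sequence in $\widetilde{\mathcal H}_{\alpha,\mu}(\Omega)$ lies in $\widetilde{\mathcal H}_{\alpha,\mu}(\Omega)$---has already been handled by Proposition~\ref{chiusoconvergenzadebole} in the delicate case $\alpha=0$. The remaining ingredients (compact embedding, Hilbert space structure, subcritical growth) are all in place, so the proof reduces to a careful replica of the argument used in Proposition~\ref{PropPS2}.
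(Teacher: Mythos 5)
Your proof is correct and follows essentially the same approach as the paper, which handles this proposition via a one-sentence remark: it follows the same lines as Proposition~\ref{PropPS2}, relying additionally on Proposition~\ref{chiusoconvergenzadebole} to ensure that the weak limit remains in $\widetilde{\mathcal H}_{\alpha,\mu}(\Omega)$. You have fleshed out exactly this argument, correctly identifying the weak closedness of the subspace as the key additional ingredient beyond Proposition~\ref{PropPS2}.
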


The proof of Proposition~\ref{PropPS22} follows the same lines as the
one of Proposition~\ref{PropPS2}, relying now also on Proposition~\ref{chiusoconvergenzadebole} to use the fact that the space~$\widetilde{\mathcal{H}}_{\alpha, \mu}(\Omega)$
is closed with respect to the weak convergence.

We now prove that the geometry of the Linking Theorem is satisfied.
To this aim, we observe that the space $\widetilde{\mathcal{H}}_{\alpha, \mu}(\Omega)$ admits a direct sum decomposition. Indeed, by the definition in~\eqref{tildeacca} (and Proposition~\ref{sistemacompleto}
when~$\alpha=0$) there exists a complete orthogonal system~$e_k$ for the space~$\widetilde{\mathcal{H}}_{\alpha, \mu}(\Omega)$.
Hence, for all~$i\in\N\setminus\{0\}$, we introduce the spaces
\begin{equation}\label{directsum}
H_i:= \mbox{span}\{e_1,\dots, e_i\}\quad\mbox{ and }\quad H^{\perp}_i:= \overline{\mbox{span}\{e_{i+1},\dots\}}.
\end{equation}
In order to not weigh down the notation,  in the previous writing we have not highlighted explicitly the dependence on~$\alpha$ and~$\mu$. However, it is clear that~$H_i=H_{i, \alpha, \mu}$ and~$H^\perp_i=H^\perp_{i, \alpha, \mu}$.

Thus, by~\eqref{directsum} we infer the following direct sum decomposition
\[
\widetilde{\mathcal{H}}_{\alpha, \mu}(\Omega) = H_i \oplus  H^{\perp}_i.
\]
\begin{remark}
We notice that by~\eqref{directsum}, it follows that
\begin{equation}\label{poincHi}
\|u\|_{\alpha,\mu}^2\leq \lambda_i\|u\|_{L^2(\Omega)}^2
\quad \mbox{for any } u\in H_i,
\end{equation}
and
\begin{equation}\label{poincHiperp}
\|u\|_{\alpha,\mu}^2\geq \lambda_{i+1}\|u\|_{L^2(\Omega)}^2
\quad \mbox{for any } u\in H_i^\perp.
\end{equation}

Indeed, if $u\in H_i$, then 
\[
u=\sum_{k=1}^ia_ke_k.
\]
Hence, by construction,
\[
\|u\|_{\alpha,\mu}^2=
\sum_{k=1}^ia_k^2\|e_k\|_{\alpha,\mu}^2
=\sum_{k=1}^ia_k^2\lambda_k\|e_k\|_{L^2(\Omega)}^2 \leq \lambda_i\sum_{k=1}^ia_k^2\|e_k\|_{L^2(\Omega)}^2
=\lambda_i\|u\|_{L^2(\Omega)}^2,
\]
namely~\eqref{poincHi} is satisfied.

In a similar fashion, if $u\in H_i^\perp$, then
\[
u=\sum_{k=i+1}^{+\infty}a_ke_k.
\]
Thus,
\[
\|u\|_{\alpha,\mu}^2=
\sum_{k=i+1}^{+\infty}a_k^2\|e_k\|_{\alpha,\mu}^2
=\sum_{k=i+1}^{+\infty}a_k^2\lambda_k\|e_k\|_{L^2(\Omega)}^2 
\geq \lambda_{i+1}\sum_{k=i+1}^{+\infty}a_k^2\|e_k\|_{L^2(\Omega)}^2
=\lambda_{i+1}\|u\|_{L^2(\Omega)}^2,
\]
which proves~\eqref{poincHiperp}.
\end{remark}

We now focus on showing that the geometry of the Linking Theorem is satisfied.

\begin{proposition}\label{LGEO1}
Let~$\lambda$ as in~\eqref{successivi} and suppose that~$f$ satisfies~\eqref{AR1} and~\eqref{AR2}.

Then, there exist $\rho>0$ and $\beta>0$ such that, for any~$u\in H_i^\perp$ with~$\|u\|_{\alpha,\mu}=\rho$,
it holds that~$\mathcal{I}(u)\geq \beta$.
\end{proposition}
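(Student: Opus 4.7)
The plan is to proceed in the same spirit as Proposition~\ref{geometriaMPAR1}, but to exploit the Poincar\'e-type inequality~\eqref{poincHiperp} available on the subspace $H_i^\perp$ in place of the unconditional inequality $\lambda<\lambda_1=1$.  Since $\lambda\in[\lambda_i,\lambda_{i+1})$ by~\eqref{successivi}, we have $\lambda<\lambda_{i+1}$, and this is the structural gap that will make the quadratic part of $\mathcal{I}(u)$ coercive on $H_i^\perp$.

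More precisely, fix $u\in H_i^\perp$.  First I would apply Lemma~\ref{SV12}, which for any $\varepsilon>0$ yields
\[
\left|\int_\Omega F(x,u)\,dx\right|\le \varepsilon\|u\|_{L^2(\Omega)}^2+\delta(\varepsilon)\|u\|_{L^p(\Omega)}^p.
\]
Plugging this into~\eqref{funlink} and noting that $\lambda\ge\lambda_1=1>0$, so $-\tfrac{\lambda}{2}\|u\|_{L^2(\Omega)}^2$ is nonpositive, gives
\[
\mathcal{I}(u)\ge \tfrac12\|u\|_{\alpha,\mu}^2-\tfrac{\lambda}{2}\|u\|_{L^2(\Omega)}^2-\varepsilon\|u\|_{L^2(\Omega)}^2-\delta(\varepsilon)\|u\|_{L^p(\Omega)}^p.
\]
Next I would invoke~\eqref{poincHiperp} to replace both $\|u\|_{L^2(\Omega)}^2$ terms by $\lambda_{i+1}^{-1}\|u\|_{\alpha,\mu}^2$, and use the continuous embedding of $\widetilde{\mathcal{H}}_{\alpha,\mu}(\Omega)\hookrightarrow L^p(\Omega)$ from Proposition~\ref{embeddings} (legal since $p\in(2,2^*_{s_\sharp})$ by~\eqref{AR1}) to obtain a constant $c=c(N,\Omega,s_\sharp)>0$ with $\|u\|_{L^p(\Omega)}\le c\|u\|_{\alpha,\mu}$.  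This produces
\[
\mathcal{I}(u)\ge \|u\|_{\alpha,\mu}^2\left[\tfrac12\Bigl(1-\tfrac{\lambda}{\lambda_{i+1}}\Bigr)-\tfrac{\varepsilon}{\lambda_{i+1}}-\delta(\varepsilon)\,c^p\|u\|_{\alpha,\mu}^{p-2}\right].
\]

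To finish, I would first choose $\varepsilon\in\bigl(0,\tfrac12(\lambda_{i+1}-\lambda)\bigr)$, so that $\tfrac12(1-\lambda/\lambda_{i+1})-\varepsilon/\lambda_{i+1}>0$; note this is where $\lambda<\lambda_{i+1}$ is crucial.  Then, since $p-2>0$, I would pick
\[
\rho\in\left(0,\left(\tfrac{\lambda_{i+1}-\lambda-2\varepsilon}{2\lambda_{i+1}\,\delta(\varepsilon)\,c^p}\right)^{1/(p-2)}\right),
\]
so that the bracket above is strictly positive when $\|u\|_{\alpha,\mu}=\rho$, and set
\[
\beta:=\rho^2\left[\tfrac12\Bigl(1-\tfrac{\lambda}{\lambda_{i+1}}\Bigr)-\tfrac{\varepsilon}{\lambda_{i+1}}-\delta(\varepsilon)\,c^p\rho^{p-2}\right]>0.
\]
There is no serious obstacle: the only delicate point is ensuring the spectral gap inequality~\eqref{poincHiperp} is applicable, which in turn relies on the fact that $e_k$ forms a complete orthogonal system of $\widetilde{\mathcal{H}}_{\alpha,\mu}(\Omega)$ (guaranteed by Proposition~\ref{sistemacompleto} in the $\alpha=0$ case and by the construction of~$\widetilde{\mathcal{H}}_{\alpha,\mu}(\Omega)$ otherwise), so that the decomposition~\eqref{directsum} is meaningful and $H_i^\perp$ is genuinely the span of eigenmodes with eigenvalues $\ge\lambda_{i+1}$.
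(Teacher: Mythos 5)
Your proposal is correct and follows essentially the same route as the paper: apply Lemma~\ref{SV12}, use the spectral gap inequality~\eqref{poincHiperp} to absorb the $\lambda$-term, invoke the embedding from Proposition~\ref{embeddings}, and then choose $\varepsilon$ and $\rho$ small enough so the bracket is positive. The only cosmetic difference is that you also bound the $\varepsilon\|u\|_{L^2(\Omega)}^2$ term via~\eqref{poincHiperp} (producing $\varepsilon/\lambda_{i+1}$), whereas the paper simply uses $\|u\|_{L^2(\Omega)}\le\|u\|_{\alpha,\mu}$ from~\eqref{normaalfamu}; both are valid.
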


\begin{proof}
Let $u\in H_i^\perp$. By~\eqref{F1}, \eqref{funlink}, \eqref{poincHiperp}, \eqref{normaalfamu} and Proposition~\ref{embeddings}, we infer that, for any~$\varepsilon>0$,
\[
\begin{aligned}
\mathcal{I}(u)&=
\frac{1}{2}\|u\|_{{\alpha,\mu}}^2-\frac{\lambda}{2}\|u\|_{L^2(\Omega)}^2-\int_\Omega F(x,u)\,dx \\
&\geq \frac{1}{2}\|u\|_{{\alpha,\mu}}^2-\frac{\lambda}{2\lambda_{i+1}}\|u\|_{\alpha,\mu}^2-\varepsilon \|u\|_{L^2(\Omega)}^2-\delta(\varepsilon)\|u\|_{L^p(\Omega)}^p \\
&\geq \|u\|_{\alpha,\mu}^2\left(\frac{1}{2}- \frac{\lambda}{2\lambda_{i+1}}-\varepsilon-c\,\delta(\varepsilon) \|u\|_{\alpha,\mu}^{p-2}\right),
\end{aligned}
\]
being $c=c(N,\Omega,s_\sharp)>0$.

Now, let
\[
\varepsilon \in \left(0, \frac{1}{2}-\frac{\lambda}{2\lambda_{i+1}} \right)
\qquad{\mbox{and}}\qquad
\rho \in\left(0, \left(\frac{1}{2c\,\delta(\varepsilon)}\left(1-\frac{\lambda}{\lambda_{i+1}}\right) -\frac{\varepsilon}{2c\,\delta(\varepsilon)}\right)^{1/(p-2)}\right).
\]
In this way,
\[
\frac{1}{2}- \frac{\lambda}{2\lambda_{i+1}}-\varepsilon
-c\,\delta(\varepsilon) \rho^{p-2}>0
\]
and therefore
\[
\inf_{\substack{u\in H_i^{\perp} \\ \|u\|_{\alpha, \mu}=\rho}} \mathcal{I}(u)
\geq \rho^2\left(
\frac{1}{2}- \frac{\lambda}{2\lambda_{i+1}}-\varepsilon
-c\,\delta(\varepsilon)\rho^{p-2}
\right)=:\beta>0,
\]
as desired.
\end{proof}

It remains to show that there exists a suitable set in which the functional in~\eqref{funlink} is nonpositive.  In the next result we prove that such a set exists and it links with the set\footnote{For every subspace~$X$ of~$\mathcal H_{\alpha, \mu}(\Omega)$, the notation~$\partial_X$ stands for the boundary in the subspace~$X$.} $\partial_{H_i^\perp}(B_\rho\cap H_i^\perp)$, being~$\rho$ as in Proposition~\ref{LGEO1}.

\begin{proposition}\label{LGEO2}
Let $\lambda$ be as in~\eqref{successivi} and assume that~$f$ satisfies~\eqref{AR3}, \eqref{AR4} and~\eqref{AR5}. Let~$\rho>0$ be
as in Proposition~\ref{LGEO1}.

Then, there exist $R>\rho$ and~$e\in H_i^\perp$ such that the set
\[
\Sigma:=\partial_{H_i\oplus e} \Big((\overline{B_R}\cap H_i) \oplus \{te : t\in [0, R]\} \Big)
\]
satisfies~$\mathcal I (\Sigma)\le 0$.
\end{proposition}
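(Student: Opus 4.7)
The plan is to decompose $\Sigma$ into three faces and estimate $\mathcal I$ separately on each. I first fix $e \in H_i^\perp$ of unit $\|\cdot\|_{\alpha,\mu}$-norm (for instance a normalized eigenfunction $e_{i+1}/\|e_{i+1}\|_{\alpha,\mu}$), so that $E := H_i \oplus \R e$ is a finite-dimensional subspace of $\widetilde{\mathcal H}_{\alpha,\mu}(\Omega)$. The set $\Sigma$ then consists of three pieces: the \emph{base} $B := \overline{B_R}\cap H_i$ (corresponding to $t=0$), the \emph{lid} $L := \{u + Re : u\in \overline{B_R}\cap H_i\}$ (corresponding to $t=R$), and the \emph{lateral side} $S := \{u+te : u\in H_i,\; \|u\|_{\alpha,\mu}=R,\; t\in[0,R]\}$.

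The crucial observation for the base is that $F(x,\cdot)\ge 0$ pointwise. This follows from~\eqref{AR5} when $r>0$ and, when $r=0$, directly from~\eqref{AR3} together with $F(x,0)=0$. Combining this with the Poincar\'e-type inequality~\eqref{poincHi} and the hypothesis $\lambda\ge\lambda_i$, I would estimate, for $u\in B$,
\[
\mathcal I(u) \le \tfrac12 \|u\|_{\alpha,\mu}^2 - \tfrac{\lambda}{2\lambda_i}\|u\|_{\alpha,\mu}^2 = \tfrac12\|u\|_{\alpha,\mu}^2\Bigl(1 - \tfrac{\lambda}{\lambda_i}\Bigr) \le 0.
\]

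For the lid and the side, I would exploit that $H_i$ and $H_i^\perp$ are orthogonal both in $L^2(\Omega)$ and in $\widetilde{\mathcal H}_{\alpha,\mu}(\Omega)$ (the latter being part of Proposition~\ref{sistemacompleto}), so that for $v = u+te$
\[
\|v\|_{\alpha,\mu}^2 = \|u\|_{\alpha,\mu}^2 + t^2
\qquad\text{and}\qquad
\|v\|_{L^2(\Omega)}^2 = \|u\|_{L^2(\Omega)}^2 + t^2\|e\|_{L^2(\Omega)}^2.
\]
Since $\lambda>0$, the $L^2$-term in $\mathcal I(v)$ contributes nonpositively. Using~\eqref{AR4} and the equivalence of the norms $\|\cdot\|_{L^{\widetilde\vartheta}(\Omega)}$ and $\|\cdot\|_{\alpha,\mu}$ on the finite-dimensional space $E$ (with some constant $C_E>0$), I would obtain
\[
\mathcal I(v) \le \tfrac12\|v\|_{\alpha,\mu}^2 - a_3 C_E \|v\|_{\alpha,\mu}^{\widetilde\vartheta} + \|a_4\|_{L^1(\Omega)}.
\]
On $L\cup S$ one has $R \le \|v\|_{\alpha,\mu}\le \sqrt 2\,R$, and since $\widetilde\vartheta>2$, the right-hand side tends to $-\infty$ as $R\to+\infty$, uniformly on these faces. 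Choosing $R$ large enough (in particular $R>\rho$) then yields $\mathcal I\le 0$ on $L\cup S$.

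I expect the main subtlety to lie in the base face: the leading quadratic terms of $\mathcal I$ cancel already when $\lambda=\lambda_i$, so there is no room for any loss coming from the nonlinear part, which is precisely why the sign hypothesis $F\ge 0$ (furnished by~\eqref{AR5} combined with~\eqref{AR3}) is indispensable here, in contrast with the mountain pass setting of Section~\ref{sec:mp}. The lid and the side are then routine once~\eqref{AR4} is combined with the finite-dimensionality of $H_i\oplus \R e$.
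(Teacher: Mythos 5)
Your proof is correct and follows essentially the same strategy as the paper's: nonnegativity of $F$ (from~\eqref{AR5}, or~\eqref{AR3} when $r=0$) together with~\eqref{poincHi} and $\lambda\ge\lambda_i$ handles the bottom face, while~\eqref{AR4} and finite-dimensionality of $H_i\oplus\R e$ make $\mathcal I$ tend to $-\infty$ on the lid and lateral side as $R\to+\infty$. The only difference is cosmetic: you normalize $e$ in $\|\cdot\|_{\alpha,\mu}$ and invoke equivalence of norms on $E$ to make the uniformity in $R$ explicit, whereas the paper normalizes $e_{i+1}$ in $L^2(\Omega)$ and leaves that uniformity implicit.
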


\begin{proof}
We claim that
\begin{equation}\label{I<0}
\mathcal I(u)\le 0 \quad\mbox{ for any } u\in H_i.
\end{equation}
Indeed, by~\eqref{funlink}, \eqref{poincHi}, \eqref{AR3}, \eqref{AR5} and~\eqref{successivi}, we infer that
\[
\mathcal I(u) = \frac{1}{2}\|u\|_{{\alpha,\mu}}^2-\frac{\lambda}{2}\|u\|_{L^2(\Omega)}^2-\int_\Omega F(x,u)\,dx 
\le \frac12 (\lambda_i -\lambda) \|u\|_{L^2(\Omega)}^2\le 0,
\]
which establishes~\eqref{I<0}.

Moreover, let $u\in H_i$ and~$t>0$ and consider~$u+te_{i+1}$. Notice that~$e_{i+1}\in H_i^\perp$.
Now, up to a normalization, we can say that~$\|e_{i+1}\|_{L^2(\Omega)}^2=1$. Hence, 
\[
\|e_{i+1}\|_{\alpha, \mu}^2=\lambda_{i+1}\|e_{i+1}\|^2_{L^2(\Omega)} =\lambda_{i+1}.
\]
Accordingly, by~\eqref{funlink}, \eqref{AR4}, \eqref{successivi}, \eqref{directsum} and~\eqref{poincHi} we have 
\[
\begin{split}
\mathcal I(u+te_{i+1})&= \frac{1}{2}\|u\|_{{\alpha,\mu}}^2-\frac{\lambda}{2}\|u\|_{L^2(\Omega)}^2 +\frac{t^2}{2} \|e_{i+1}\|_{{\alpha,\mu}}^2 -\frac{\lambda t^2}{2} \|e_{i+1}\|_{L^2(\Omega)}^2 -\int_\Omega F(x,u+te_{i+1})\,dx \\
&\le \frac12 (\lambda_i -\lambda) \|u\|_{L^2(\Omega)}^2 +\frac{t^2}{2} (\lambda_{i+1}-\lambda) - a_3\, t^{\widetilde\vartheta} \int_\Omega \left|\frac{u}{t} +e_{i+1} \right|^{\widetilde\vartheta} \, dx -\|a_4\|_{L^1(\Omega)}\\
&\le \frac{t^2}{2} (\lambda_{i+1}-\lambda) - a_3\, t^{\widetilde\vartheta} \int_\Omega \left|\frac{u}{t} +e_{i+1} \right|^{\widetilde\vartheta} \, dx -\|a_4\|_{L^1(\Omega)}.
\end{split}
\]
Thus, being $\widetilde\vartheta>2$, we get 
\[
\lim_{t\to +\infty} \mathcal I(u+te_{i+1}) = -\infty.
\]
{F}rom this and~\eqref{I<0}, we can take $e:=e_{i+1}$ and $R$ large enough to get the desired result.
This concludes the proof.
\end{proof}

\begin{proof}[Proof of the Theorem~\ref{LINK}]
Since $\lambda\ge \lambda_1$, we can take $\lambda$ as in~\eqref{successivi}. {F}rom Popositions~\ref{boundedlinking} and~\ref{PropPS22},
we infer that the functional~$\mathcal I$ satisfies the Palais--Smale condition at any level $c\in\R$.

Moreover, by Propositions~\ref{LGEO1} and~\ref{LGEO2}, the geometry of the Linking Theorem is satisfied as well. 

Hence, we can apply the Linking Theorem (see e.g.~\cite[Theorem~5.3]{MR845785}). {F}rom this, we infer that there exists a critical point $u\in\widetilde{\mathcal{H}}_{\alpha, \mu}(\Omega)$ such that
\[
\mathcal I(u)\ge\beta>0 = \mathcal I(0),
\]
being $\beta$ as in Proposition~\ref{LGEO1}. In particular, $u\not\equiv 0$. This concludes the proof.
\end{proof}

\section{Examples and applications}\label{sec:appex}

In this section we illustrate some examples that arise from particular choices of the measure~$\mu$.
Indeed, the operator~$L_{\alpha, \mu}$ introduced in~\eqref{ELLE} has a broad formulation which allows us to produce a wide number of new interesting existence results for subcritical problems with Neumann boundary conditions. We exhibit some of these cases here below.

Throughout this section we assume that $f:\Omega\times \R \to \R$ is a Carath\'eodory function satisfying~\eqref{AR1}, \eqref{AR2},  \eqref{AR3}, \eqref{AR4} and~\eqref{AR5}.  Moreover, we will use the notation $H^s(\Omega)$ to identify the space
\[
H^s(\Omega):=\Big\{u:\R^N\to\R \mbox{ measurable } : \|u\|_{L^2(\Omega)} + [u]_s <+\infty\Big\},
\]
see e.g.~\cite[Section~3]{MR3651008}. Also, in the forthcoming results, we will use the notation~$
H^1(\Omega)\cap H^s (\Omega)$
to denote the space of functions~$u\in H^s (\Omega)$ whose restriction over~$\Omega$ belongs to~$H^1(\Omega)$.

The first existence result that we present here concerns with the mixed operator $-\alpha\Delta u + \beta (-\Delta)^s u$. 

\begin{corollary}\label{cor1}
Let $s\in (0, 1)$ and $\beta>0$.  Denote by $\widetilde\lambda_k$ the sequence of the eigenvalues of the operator
\[
-\alpha\Delta u + \beta (-\Delta)^s u
\]
under homogeneous Neumann conditions\footnote{We recall the notation introduced in~\eqref{eigennotation}}.

Then, if~$\alpha>0$, for any $\lambda\in\R$, the problem
\begin{equation}\label{P11}
\begin{cases}
-\alpha\Delta u + \beta (-\Delta)^s u +u=\lambda u +f(x,u)  &\mbox{in }\Omega,\\
\partial_\nu u(x)=0  &\mbox{for all } x\in \partial\Omega,\\
 \Ns u(x)=0 &\mbox{for all }x\in \R^N \setminus \overline{\Omega}, 
\end{cases}
\end{equation}
admits a nontrivial solution $u$ such that 
\begin{equation}\label{spazicor1}
\begin{alignedat}{2}
\mbox{either } \ & u\in H^1(\Omega)\cap H^s (\Omega)  \quad&&\mbox{ if } \lambda<\lambda_1,\\
\mbox{or } \ &u\in H^1(\Omega)\cap H^s (\Omega)\cap L^2(\R^N) \quad&&\mbox{ if } \lambda\ge\lambda_1.
\end{alignedat}
\end{equation}

If instead~$\alpha=0$, for any $\lambda\in\R$, the problem
\begin{equation}\label{P12}
\begin{cases}
\beta(-\Delta)^s u +u=\lambda u +f(x,u)  &\mbox{in }\Omega,\\
 \Ns u(x)=0 &\mbox{for all }x\in \R^N \setminus \overline{\Omega}, 
\end{cases}
\end{equation}
admits a nontrivial solution $u$ such that
\begin{equation}\label{spazicor11}
\begin{alignedat}{2}
\mbox{either } \ & u\in H^s (\Omega)  \quad&&\mbox{ if } \lambda<\lambda_1,\\
\mbox{or } \ &u\in \widetilde{\mathcal H}_{0, \mu} (\Omega) \quad&&\mbox{ if } \lambda\ge\lambda_1,
\end{alignedat}
\end{equation}
being 
\[\
\widetilde{\mathcal H}_{0, \mu}(\Omega):=\left\{u\in H^s(\Omega) : \Ns (u)=0 \mbox{ in }\R^N\setminus \overline{\Omega}\right\}.
\]
\end{corollary}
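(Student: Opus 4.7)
The plan is to specialize the main Theorems~\ref{MPT} and~\ref{LINK} to the measure $\mu:=\beta\delta_s$, which is a nonnegative, finite Borel measure on~$(0,1)$, and to verify that the abstract functional spaces $\mathcal H_{\alpha,\mu}(\Omega)$ and $\widetilde{\mathcal H}_{\alpha,\mu}(\Omega)$ reduce to the concrete spaces listed in~\eqref{spazicor1} and~\eqref{spazicor11}. By the very definition~\eqref{ELLE} of $L_{\alpha,\mu}$ and the property of the Dirac delta,
\[
L_{\alpha,\mu}(u)=-\alpha\Delta u+\beta(-\Delta)^s u,
\]
so that problems~\eqref{P11} and~\eqref{P12} coincide with the specialization of~\eqref{problemasottocritico} to this choice of $\mu$ (noting that the integral $\int_{(0,1)}\mathcal N_{s'}u\,d\mu(s')=\beta\mathcal N_s u$, which vanishes if and only if $\mathcal N_s u=0$).

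First I would identify the critical exponent: for $\alpha=0$ the support of $\mu$ is the single point $\{s\}\subset(0,1)$, hence $\mu([s,1))=\beta>0$ and one may take $\widetilde s_\sharp=s$, so $s_\sharp=s$ by~\eqref{ssharp}; for $\alpha\neq 0$ one simply has $s_\sharp=1$. Next I would match the functional spaces. With $\mu=\beta\delta_s$ the measure integral in~\eqref{normaalfamu} becomes $\tfrac{\beta}{2}[u]_s^2$ and the boundary/exterior weights vanish under the homogeneous Neumann setting, so the norm $\|\cdot\|_{\alpha,\mu}$ is equivalent to the natural norm on $H^1(\Omega)\cap H^s(\Omega)$ when $\alpha>0$ and to the $H^s(\Omega)$ norm when $\alpha=0$, using the definition of $H^s(\Omega)$ recalled in the excerpt.

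For the Mountain Pass range $\lambda<\lambda_1$, I would invoke Theorem~\ref{MPT} directly; the resulting weak solution belongs to $\mathcal H_{\alpha,\mu}(\Omega)$, which by the previous step is $H^1(\Omega)\cap H^s(\Omega)$ if $\alpha>0$ and $H^s(\Omega)$ if $\alpha=0$. For the Linking range $\lambda\ge\lambda_1$, I would invoke Theorem~\ref{LINK} and unravel the definition~\eqref{tildeacca}: when $\alpha\neq 0$ and $\mu\not\equiv 0$ the Linking space is $L^2(\R^N)\cap \mathcal H_{\alpha,\mu}(\Omega)=H^1(\Omega)\cap H^s(\Omega)\cap L^2(\R^N)$, while when $\alpha=0$ it reduces, via~\eqref{H0MU} and the fact that $\int_{(0,1)}\mathcal N_{s'}u\,d\mu(s')=\beta\mathcal N_s u$, precisely to
\[
\widetilde{\mathcal H}_{0,\mu}(\Omega)=\bigl\{u\in H^s(\Omega):\mathcal N_s u=0\ \text{in }\R^N\setminus\overline\Omega\bigr\}.
\]
This yields exactly the spaces in~\eqref{spazicor1} and~\eqref{spazicor11}.

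The proof is essentially bookkeeping once the identification of spaces is made, so the main (minor) obstacle is simply verifying the equivalence of norms and the correct reduction of the subspace $\widetilde{\mathcal H}_{\alpha,\mu}(\Omega)$ in the Dirac-measure case; the nontrivial analytic content—compact embeddings, Palais--Smale, and the Mountain Pass and Linking geometries—has been packaged into Theorems~\ref{MPT} and~\ref{LINK} and used off the shelf.
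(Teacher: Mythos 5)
Your proof follows essentially the same route as the paper: specialize $\mu$ to a Dirac mass at $s$, identify $s_\sharp$ via~\eqref{ssharp}, verify the reduction of $\mathcal H_{\alpha,\mu}(\Omega)$ and $\widetilde{\mathcal H}_{\alpha,\mu}(\Omega)$ to the concrete spaces, and then invoke Theorems~\ref{MPT} and~\ref{LINK} off the shelf. The only cosmetic difference is that you take $\mu=\beta\delta_s$ while the paper writes $\mu=\delta_s$ (absorbing $\beta$ into the operator), but this does not change the argument.
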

\begin{proof}
Let $\mu:=\delta_s$, where~$\delta_s$ is the Dirac's delta at~$s$.
Recalling~\eqref{ssharp}, we have that~$s_\sharp=1$ if~$\alpha\neq0$ and~$s_\sharp=s$ if~$\alpha=0$.
Hence, assumption~\eqref{AR1} is verified by taking either~$p\in (2, 2^*)$ if~$\alpha\neq 0$ or~$p\in (2, 2^*_s)$ if~$\alpha =0$.

The existence of a nontrivial solution for problems~\eqref{P11} and~\eqref{P12} is guaranteed by the Theorems~\ref{MPT} and~\ref{LINK}.
Moreover, we have that~\eqref{spazicor1} and~\eqref{spazicor11} plainly follow by the definition of the spaces in~\eqref{HALPHAMU} and~\eqref{tildeacca}, respectively.
\end{proof}

One more interesting application of our results arises when we choose~$\mu$ to be a finite (or even infinite) sum of Dirac's measures.  On this topic, we provide the following two results.
To our best knowledge,  these results are entirely new in the literature.

\begin{corollary}\label{cor2}
Let $n\in\N$ with $n\ge 2$ and $s_k\in (0, 1)$ for any $k\in\{1, \dots, n\}$.  Denote by $\widetilde\lambda_k$ the sequence of the eigenvalues of the operator
\[
\-\alpha\Delta u  + \displaystyle\sum_{k=1}^n(-\Delta)^{s_k} u
\]
under homogeneous Neumann conditions.

Then, if $\alpha\neq 0$, for any $\lambda\in\R$, the problem
\begin{equation}\label{P21}
\begin{cases}
-\alpha\Delta u  + \displaystyle\sum_{k=1}^n(-\Delta)^{s_k} u +u=\lambda u +f(x,u)  &\mbox{in }\Omega,\\
\partial_\nu u(x)=0  &\mbox{for all } x\in \partial\Omega,\\
\displaystyle\sum_{k=1}^n\mathscr N_{s_k} u(x)=0 &\mbox{for all }x\in \R^N \setminus \overline{\Omega}, 
\end{cases}
\end{equation}
admits a nontrivial solution $u$ such that
\begin{equation}\label{spazicor2}
\begin{alignedat}{2}
\mbox{either } \ & u\in H^1(\Omega)\cap H^{\overline s} (\Omega)  \quad&&\mbox{ if } \lambda<\lambda_1,\\
\mbox{or } \ &u\in H^1(\Omega)\cap H^{\overline s} (\Omega)\cap L^2(\R^N) \quad&&\mbox{ if } \lambda\ge\lambda_1,
\end{alignedat}
\end{equation}
where $\overline s:=\max\{s_k : k\in\{1,\dots, n\}\}$.

If instead~$\alpha=0$, for any $\lambda\in\R$, the problem
\begin{equation}\label{P22}
\begin{cases}
\displaystyle\sum_{k=1}^n (-\Delta)^{s_k} u +u=\lambda u +f(x,u)  &\mbox{in }\Omega,\\
\displaystyle\sum_{k=1}^n\mathscr N_{s_k} u(x)=0 &\mbox{for all }x\in \R^N \setminus \overline{\Omega}, 
\end{cases}
\end{equation}
admits a nontrivial solution $u$ such that
\begin{equation}\label{spazicor22}
\begin{alignedat}{2}
\mbox{either } \ & u\in H^{\overline s} (\Omega)  \quad&&\mbox{ if } \lambda<\lambda_1,\\
\mbox{or } \ &u\in \widetilde{\mathcal H}_{0, \mu} (\Omega) \quad&&\mbox{ if } \lambda\ge\lambda_1,
\end{alignedat}
\end{equation}
being
\[\
\widetilde{\mathcal H}_{0, \mu}(\Omega):=\left\{u\in H^{\overline s}(\Omega) : \sum_{k=1}^n\mathscr N_{s_k} (u)=0 \mbox{ in }\R^N\setminus \overline{\Omega}\right\}.
\]
\end{corollary}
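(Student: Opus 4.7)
The plan is to specialise Theorems~\ref{MPT} and~\ref{LINK} to the measure
\[
\mu := \sum_{k=1}^n \delta_{s_k},
\]
where $\delta_{s_k}$ stands for the Dirac delta at $s_k \in (0,1)$, following exactly the same template used in the proof of Corollary~\ref{cor1}. First I would observe that $\mu$ is a nonnegative, finite Borel measure on $(0,1)$, and hence admissible in the definition~\eqref{ELLE}. A direct substitution then yields
\[
L_{\alpha,\mu}(u) = -\alpha \Delta u + \sum_{k=1}^n (-\Delta)^{s_k} u,
\]
and the $(\alpha,\mu)$-Neumann prescription of Definition~\ref{NDEFN} collapses to $\sum_{k=1}^n \mathscr N_{s_k} u = 0$ on $\R^N \setminus \overline\Omega$, together with $\partial_\nu u = 0$ on $\partial\Omega$ when $\alpha \neq 0$, matching the boundary conditions in problems~\eqref{P21} and~\eqref{P22}.

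\medskip

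Next I would pin down the critical exponent via~\eqref{ssharp}. If $\alpha \neq 0$ then $s_\sharp = 1$ automatically. If $\alpha = 0$, the identity $\mu(\{\overline s\}) = 1 > 0$ permits the choice $\widetilde s_\sharp = \overline s$, so that $s_\sharp = \overline s$ and $2^*_{s_\sharp} = 2^*_{\overline s}$. Consequently hypothesis~\eqref{AR1} is fulfilled with $p \in (2, 2^*)$ when $\alpha \neq 0$ and $p \in (2, 2^*_{\overline s})$ when $\alpha = 0$, while~\eqref{AR2}--\eqref{AR5} are in force by the standing hypotheses of Section~\ref{sec:appex}. Invoking Theorems~\ref{MPT} and~\ref{LINK} produces a nontrivial weak solution $u$ for every $\lambda \in \R$, interpreted according to Definitions~\ref{weakdefn} and~\ref{wdlinking} respectively.

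\medskip

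It remains to translate the ambient spaces~\eqref{HALPHAMU} and~\eqref{tildeacca} into the concrete form appearing in~\eqref{spazicor2} and~\eqref{spazicor22}. For this choice of $\mu$ the integral $\int_{(0,1)} [u]_s^2 \, d\mu(s)$ reduces to the finite sum $\sum_{k=1}^n [u]_{s_k}^2$, so, upon restriction to $\Omega$, $\mathcal H_\mu(\Omega)$ coincides with $\bigcap_{k=1}^n H^{s_k}(\Omega)$ (and analogously for $\widetilde{\mathcal H}_{0,\mu}(\Omega)$, with the additional constraint $\sum_k \mathscr N_{s_k} u = 0$ outside $\Omega$). The only point that is not pure unfolding of definitions — and thus the mildest obstacle — is the observation that on a bounded Lipschitz domain the standard continuous embedding $H^{s_2}(\Omega) \hookrightarrow H^{s_1}(\Omega)$ for $s_1 < s_2$ collapses the finite intersection $\bigcap_{k=1}^n H^{s_k}(\Omega)$ to $H^{\overline s}(\Omega)$. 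Once this identification is in place, the regularity statements~\eqref{spazicor2} and~\eqref{spazicor22} follow at once, closing the proof along the lines of Corollary~\ref{cor1}.
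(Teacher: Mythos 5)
Your proposal takes essentially the same route as the paper: specialize $\mu$ to $\sum_{k=1}^n \delta_{s_k}$, read off $s_\sharp$ (namely $1$ if $\alpha\neq 0$ and $\overline s$ if $\alpha=0$), and invoke Theorems~\ref{MPT} and~\ref{LINK}, with the ambient spaces then coming from~\eqref{HALPHAMU} and~\eqref{tildeacca}.

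One caution on the last step, however. For the regularity in~\eqref{spazicor2} and~\eqref{spazicor22} you only need the trivial one-sided inclusion $\mathcal H_\mu(\Omega)=\bigcap_{k=1}^n H^{s_k}(\Omega)\subset H^{\overline s}(\Omega)$, which holds simply because $H^{\overline s}(\Omega)$ is one of the factors of the intersection. The ``collapse'' of the intersection to $H^{\overline s}(\Omega)$ that you invoke via the standard embedding $H^{s_2}(\Omega)\hookrightarrow H^{s_1}(\Omega)$ for $s_1<s_2$ is both unnecessary and delicate here: the spaces $H^{s}(\Omega)$ of Section~\ref{sec:appex} carry the Gagliardo seminorm over $\Q=\R^{2N}\setminus(\R^N\setminus\Omega)^2$, which includes far-field interactions between $\Omega$ and all of $\R^N\setminus\Omega$. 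For $|x-y|$ large the kernel $|x-y|^{-N-2s}$ decays \emph{faster} when $s$ is larger, so finiteness of $[u]_{\overline s}$ gives no control on $[u]_{s_k}$ for $s_k<\overline s$, and the classical monotone embedding (valid for the intrinsic Sobolev seminorm on a bounded Lipschitz domain) does not transfer to these cross-term spaces. Since the conclusion of the corollary only uses the trivial inclusion, the argument still closes, but you should drop, or at least qualify, the collapse claim.
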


\begin{proof}
Let $$\mu:=\displaystyle\sum_{k=1}^n \delta_{s_k}.$$ By~\eqref{ssharp}, we have that~$s_\sharp=1$ if~$\alpha\neq0$ and~$s_\sharp = \overline s$ if~$\alpha=0$. In particular, we have that assumption~\eqref{AR1} is verified by taking either~$p\in (2, 2^*)$ if~$\alpha\neq 0$ or~$p\in (2, 2^*_{\overline s})$ if~$\alpha =0$.

Then, the desired nontrivial solutions for problems~\eqref{P21} and~\eqref{P22} are deduced by Theorems~\ref{MPT} and~\ref{LINK}.
In addition, \eqref{spazicor2} and~\eqref{spazicor22} are a consequence of definitions~\eqref{HALPHAMU} and~\eqref{tildeacca}, respectively. 
\end{proof}

\begin{corollary}\label{cor3}
Let $c_k\ge 0$ and $s_k\in (0, 1)$ for any $k\in\N\setminus\{0\}$. 
Suppose that
$$ \sum_{k=0}^{+\infty} c_k\in(0,+\infty).$$
Denote by $\widetilde\lambda_k$ the sequence of the eigenvalues of the operator
\[
-\alpha\Delta  u + \displaystyle\sum_{k=1}^{+\infty} c_k (-\Delta)^{s_k} u
\]
under homogeneous Neumann conditions.

Then, if $\alpha\neq 0$, for any $\lambda\in\R$, the problem
\begin{equation}\label{P31}
\begin{cases}
-\alpha\Delta  u + \displaystyle\sum_{k=1}^{+\infty} c_k (-\Delta)^{s_k} u +u=\lambda u +f(x,u)  &\mbox{in }\Omega,\\
\partial_\nu u(x)=0  &\mbox{for all } x\in \partial\Omega,\\
\displaystyle\sum_{k=1}^{+\infty} c_k \mathscr N_{s_k} u(x)=0 &\mbox{for all }x\in \R^N \setminus \overline{\Omega}, 
\end{cases}
\end{equation}
admits a nontrivial solution $u$ such that
\begin{equation}\label{spazicor3}
\begin{alignedat}{2}
\mbox{either } \ & u\in H^1(\Omega)\cap H_\mu(\Omega)  \quad&&\mbox{ if } \lambda<\lambda_1,\\
\mbox{or } \ &u\in H^1(\Omega)\cap H_{\mu}(\Omega)\cap L^2(\R^N) \quad&&\mbox{ if } \lambda\ge\lambda_1,
\end{alignedat}
\end{equation}
being\footnote{We notice that if there exists $\overline k\in\N\setminus\{0\}$ such that $s_{\overline k}\ge s_k$ for any $k\in\N\setminus\{0\}$, then $H_\mu(\Omega)$ turns out to be the space $H^{s_{\overline k}}(\Omega)$.}
\[
H_\mu (\Omega):=\bigcap_{k=1}^{+\infty} H^{s_k}(\Omega).
\]

If instead~$\alpha=0$, for any $\lambda\in\R$, the problem
\begin{equation}\label{P32}
\begin{cases}
\displaystyle\sum_{k=1}^{+\infty} c_k(-\Delta)^{s_k} u +u=\lambda u +f(x,u)  &\mbox{in }\Omega,\\
\displaystyle\sum_{k=1}^{+\infty} c_k \mathscr N_{s_k} u(x)=0 &\mbox{for all }x\in \R^N \setminus \overline{\Omega}, 
\end{cases}
\end{equation}
admits a nontrivial solution $u$ such that
\begin{equation}\label{spazicor33}
\begin{alignedat}{2}
\mbox{either } \ & u\in H_\mu (\Omega)  \quad&&\mbox{ if } \lambda<\lambda_1,\\
\mbox{or } \ &u\in \widetilde{\mathcal H}_{0, \mu} (\Omega) \quad&&\mbox{ if } \lambda\ge\lambda_1,
\end{alignedat}
\end{equation}
being
\[\
\widetilde{\mathcal H}_{0, \mu}(\Omega):=\left\{u\in H_{\mu}(\Omega) : \sum_{k=1}^{+\infty}\mathscr N_{s_k} (u)=0 \mbox{ in }\R^N\setminus \overline{\Omega}\right\}.
\]
\end{corollary}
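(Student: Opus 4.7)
The plan is to cast the problem into the general framework by choosing the measure
\[
\mu := \sum_{k=1}^{+\infty} c_k \, \delta_{s_k},
\]
verifying that it is a nonnegative, finite Borel measure on $(0,1)$ (which follows from $c_k \geq 0$ and $\sum_{k} c_k \in (0,+\infty)$), and then invoking Theorems~\ref{MPT} and~\ref{LINK} directly. The whole argument will then reduce to identifying the critical exponent, checking that assumption~\eqref{AR1} is compatible with this choice, and unfolding the definitions of the functional spaces~\eqref{HALPHAMU} and~\eqref{tildeacca} for this specific $\mu$.

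The first step is to identify $s_\sharp$ as in~\eqref{ssharp}. When $\alpha\neq 0$, we immediately set $s_\sharp=1$ and assumption~\eqref{AR1} is verified provided $p\in(2,2^*)$. When $\alpha=0$, since $\sum_{k} c_k>0$ there must exist some $k_0\in\N\setminus\{0\}$ with $c_{k_0}>0$, so that $\mu([\widetilde s_\sharp,1))\geq c_{k_0}>0$ for every $\widetilde s_\sharp\in(0,s_{k_0})$. Fixing such a $\widetilde s_\sharp$, assumption~\eqref{AR1} then holds for any $p\in(2,2^*_{\widetilde s_\sharp})$. With these choices, the hypotheses of Theorems~\ref{MPT} and~\ref{LINK} are met, yielding a nontrivial weak solution in $\mathcal H_{\alpha,\mu}(\Omega)$ when $\lambda<\lambda_1$ and in $\widetilde{\mathcal H}_{\alpha,\mu}(\Omega)$ when $\lambda\geq\lambda_1$, in the sense of Definitions~\ref{weakdefn} and~\ref{wdlinking}, respectively.

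The remaining task, which I expect to be the most delicate bookkeeping, is to translate these abstract memberships into the explicit spaces~\eqref{spazicor3} and~\eqref{spazicor33}. For this I would unfold the norm~\eqref{normamu}: since $g\equiv 0$ and the boundary term vanishes, $\|u\|_\mu^2$ reduces to
\[
\|u\|_{L^2(\Omega)}^2 + \frac12 \sum_{k=1}^{+\infty} c_k\, [u]_{s_k}^2.
\]
For any $k$ with $c_k>0$, finiteness of this norm forces $[u]_{s_k}<+\infty$, hence $u\in H^{s_k}(\Omega)$; for indices with $c_k=0$ the corresponding Dirac mass contributes nothing to $\mu$ and may be dropped from the intersection defining $H_\mu(\Omega)$. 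This identifies $\mathcal H_\mu(\Omega)$ with $H_\mu(\Omega)=\bigcap_{k:\,c_k>0} H^{s_k}(\Omega)$, which justifies the footnote in the statement: if some $s_{\overline k}$ dominates all the $s_k$ associated with positive weights, then by the nesting $H^{s_{\overline k}}(\Omega)\hookrightarrow H^{s_k}(\Omega)$ the intersection collapses to $H^{s_{\overline k}}(\Omega)$.

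With this identification, the definition~\eqref{HALPHAMU} yields $\mathcal H_{\alpha,\mu}(\Omega) = H^1(\Omega)\cap H_\mu(\Omega)$ when $\alpha\neq0$ and $\mathcal H_{0,\mu}(\Omega)=H_\mu(\Omega)$ when $\alpha=0$, giving the first line of~\eqref{spazicor3} and~\eqref{spazicor33}. For the Linking case $\lambda\geq\lambda_1$, the definition~\eqref{tildeacca} inserts the additional $L^2(\R^N)$ constraint when $\alpha\neq 0$, producing exactly the space on the second line of~\eqref{spazicor3}; when $\alpha=0$, the nonlocal Neumann constraint
\[
\sum_{k=1}^{+\infty} c_k\,\mathscr{N}_{s_k} u(x)= \int_{(0,1)}\Ns u(x)\,d\mu(s)=0 \quad\mbox{in }\R^N\setminus\overline\Omega
\]
identifies $\widetilde{\mathcal H}_{0,\mu}(\Omega)$ precisely with the subspace of $H_\mu(\Omega)$ written in the statement, completing~\eqref{spazicor33}.
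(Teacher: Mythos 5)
Your proof is correct and follows essentially the same route as the paper: set $\mu=\sum_k c_k\,\delta_{s_k}$, identify $s_\sharp$ and the admissible range of $p$ in~\eqref{AR1}, invoke Theorems~\ref{MPT} and~\ref{LINK}, and then unwind the definitions~\eqref{HALPHAMU} and~\eqref{tildeacca} to read off the spaces in~\eqref{spazicor3} and~\eqref{spazicor33}. If anything, your selection of $\widetilde s_\sharp$ in the $\alpha=0$ case (any value below some $s_{k_0}$ with $c_{k_0}>0$) is slightly more careful than the paper's, which uses $\sup_k s_k$ without observing that the weights $c_k$ near the supremum could all vanish, in which case $\mu([\,\overline s-\delta,1))$ might be zero.
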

\begin{proof}
Let 
$$ \mu:= \sum_{k=0}^{+\infty} c_k \,\delta_{s_k}.$$
We notice that two cases can occur. Indeed, recalling~\eqref{ssharp}, if
$$\displaystyle\overline s:=\sup_{k\in\N\setminus\{0\}}\{s_k\} \neq s_k$$ for every~$k$, then we can take~$s_\sharp:= \overline s-\delta$, for some~$\delta>0$ sufficiently small.
If instead~$\overline s = s_{\overline k }$ for some~$\overline k$, then we can take~$s_\sharp:= s_{\overline k}$.

The desired nontrivial solutions for problems~\eqref{P31} and~\eqref{P32} are inferred by Theorems~\ref{MPT} and~\ref{LINK}.
Moreover, \eqref{spazicor3} and~\eqref{spazicor33} come from definitions~\eqref{HALPHAMU} and~\eqref{tildeacca}, respectively. 
\end{proof}

Remarkably,  another interesting result stems from the continuous superposition of fractional operators. To the best of our knowledge,  this result turns out to be new as well.

\begin{corollary}\label{cor4}
Let $\omega:(0, 1)\to\R^+$ be a measurable, nonnegative and not identically zero function.  Denote by $\widetilde\lambda_k$ the sequence of the eigenvalues of the operator
\begin{equation}\label{intoperator}
-\alpha\Delta  u + \displaystyle\int_0^1 \omega(s) (-\Delta)^s u\, ds
\end{equation}
under homogeneous Neumann conditions.

Then, if~$\alpha\neq 0$, for any $\lambda\in\R$, the problem
\begin{equation}\label{P41}
\begin{cases}
-\alpha\Delta  u + \displaystyle\int_0^1 \omega(s) (-\Delta)^s u\, ds +u=\lambda u +f(x,u)  &\mbox{in }\Omega,\\
\partial_\nu u(x)=0  &\mbox{for all } x\in \partial\Omega,\\
\displaystyle\int_0^1 \omega(s) \Ns u\, ds=0 &\mbox{for all }x\in \R^N \setminus \overline{\Omega}, 
\end{cases}
\end{equation}
admits a nontrivial solution $u$ such that
\begin{equation}\label{spazicor4}
\begin{alignedat}{2}
\mbox{either } \ & u\in H^1(\Omega)\cap \mathcal H_\mu (\Omega) \quad&&\mbox{ if } \lambda<\lambda_1,\\
\mbox{or } \ &u\in H^1(\Omega)\cap \mathcal H_\mu (\Omega)\cap L^2(\R^N) \quad&&\mbox{ if } \lambda\ge\lambda_1,
\end{alignedat}
\end{equation}
being 
\[
\mathcal H_\mu (\Omega):= \left\lbrace u:\R^N\to\R \text{ measurable } : u\in L^2(\Omega) \ \mbox{ and } \ \int_0^1 \omega(s) [u]_s^2 \, ds <+\infty \right\rbrace.
\]

If instead~$\alpha=0$, for any $\lambda\in\R$, the problem
\begin{equation}\label{P42}
\begin{cases}
\displaystyle\int_0^1 \omega(s) (-\Delta)^s u\, ds +u=\lambda u +f(x,u)  &\mbox{in }\Omega,\\
\displaystyle\int_0^1 \omega(s) \Ns u\, ds=0  &\mbox{for all }x\in \R^N \setminus \overline{\Omega}, 
\end{cases}
\end{equation}
admits a nontrivial solution $u$ such that
\begin{equation}\label{spazicor44}
\begin{alignedat}{2}
\mbox{either } \ & u\in \mathcal H_\mu (\Omega)  \quad&&\mbox{ if } \lambda<\lambda_1,\\
\mbox{or } \ &u\in \widetilde{\mathcal H}_{0, \mu} (\Omega) \quad&&\mbox{ if } \lambda\ge\lambda_1,
\end{alignedat}
\end{equation}
being
\[\
\widetilde{\mathcal H}_{0, \mu}(\Omega):=\left\{u\in \mathcal H_{\mu}(\Omega) :\displaystyle\int_0^1 \omega(s) \Ns u\, ds=0 \mbox{ in }\R^N\setminus \overline{\Omega}\right\}.
\]
\end{corollary}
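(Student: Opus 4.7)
The plan is to specialize Theorems~\ref{MPT} and~\ref{LINK} to the measure $d\mu(s):=\omega(s)\,ds$ on $(0,1)$, for which the operator $L_{\alpha,\mu}$ in~\eqref{ELLE} boils down exactly to the continuous superposition in~\eqref{intoperator}, the Neumann integral $\int_{(0,1)}\Ns u\,d\mu(s)$ becomes $\int_0^1\omega(s)\Ns u\,ds$, and the seminorm contribution $\int_{(0,1)}[u]_s^2\,d\mu(s)$ becomes $\int_0^1\omega(s)[u]_s^2\,ds$. This Radon--Nikod\'ym identification is what bridges the abstract framework of Section~\ref{sec:func} with the concrete statement of the corollary.

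First I would verify that $\mu$ fits the standing assumptions of the paper, namely that it is a nonnegative finite Borel measure on $(0,1)$ with $\mu\not\equiv 0$. Nonnegativity and nontriviality follow at once from the hypotheses on $\omega$, while finiteness is the implicit requirement $\omega\in L^1(0,1)$ (without which the operator in~\eqref{intoperator} would not even be defined on generic smooth test functions). Second, I would pin down the critical exponent $s_\sharp$ from~\eqref{ssharp}: if $\alpha\neq 0$ there is nothing to do and $s_\sharp=1$, so~\eqref{AR1} is admissible for any $p\in(2,2^*)$; if $\alpha=0$, since $\omega$ is measurable, nonnegative and not identically zero, there exists $\widetilde s_\sharp\in(0,1)$ with $\int_{\widetilde s_\sharp}^1\omega(s)\,ds>0$, so that $\mu([\widetilde s_\sharp,1))>0$ and hence~\eqref{AR1} is admissible for any $p\in(2,2^*_{s_\sharp})$.

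With these preparations in place, Theorems~\ref{MPT} and~\ref{LINK} apply verbatim: for $\lambda<\lambda_1$ the Mountain Pass result produces a nontrivial weak solution in $\mathcal H_{\alpha,\mu}(\Omega)$, and for $\lambda\geq\lambda_1$ the Linking result yields a nontrivial weak solution in $\widetilde{\mathcal H}_{\alpha,\mu}(\Omega)$. The last step is to unfold these two spaces via~\eqref{HALPHAMU}, \eqref{Hmu} and~\eqref{tildeacca}, \eqref{H0MU}: with the present measure, the abstract space $\mathcal H_\mu(\Omega)$ coincides exactly with the one displayed in the corollary, and $\widetilde{\mathcal H}_{0,\mu}(\Omega)$ coincides with the space spelled out beneath~\eqref{spazicor44}, so that~\eqref{spazicor4} and~\eqref{spazicor44} drop out directly, in complete analogy with the treatment of Corollaries~\ref{cor1}--\ref{cor3}.

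I do not anticipate any genuine obstacle: the argument is a bookkeeping specialization. The only mild care required is to make sure that every abstract Borel integral in the variable $s$ against $\mu$ rewrites pointwise as the weighted Lebesgue integral with density $\omega$; this guarantees in particular that the Neumann prescription in Definition~\ref{NDEFN} for the present $\mu$ matches the one stated in~\eqref{P41}--\eqref{P42}, and that the choice of $s_\sharp$ made above is compatible with the subcritical growth encoded in~\eqref{AR1}.
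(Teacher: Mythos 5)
Your proposal is correct and follows essentially the same route as the paper's own proof: identify $d\mu(s)=\omega(s)\,ds$, fix $s_\sharp$ according to~\eqref{ssharp} (taking $s_\sharp=1$ when $\alpha\neq0$, and any $s_\sharp\in(0,1)$ with $\int_{s_\sharp}^1\omega>0$ when $\alpha=0$), invoke Theorems~\ref{MPT} and~\ref{LINK}, and read off the solution spaces from~\eqref{HALPHAMU} and~\eqref{tildeacca}. Your explicit remark that $\omega\in L^1(0,1)$ is implicitly required for $\mu$ to be a finite measure is a sensible clarification that the paper leaves tacit.
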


\begin{proof}
We observe that the operator in~\eqref{intoperator} is a particular case of $L_{\alpha, \mu}$ defined in~\eqref{ELLE}, where~$d\mu(s)$ reduces to~$\omega(s)ds$.

In this setting, recalling~\eqref{ssharp}, if~$\alpha\neq0$ we take~$s_\sharp=1$.
If instead~$\alpha=0$, we can take~$s_\sharp$ to be any value in~$(0, 1)$ satisfying
\[
\int_{s_\sharp}^1 \omega(s) \, ds >0.
\]
This choice makes $s_\sharp$ to play the role of critical exponent in~\eqref{AR1}.

The desired result for both problems~\eqref{P41} and~\eqref{P42} follows by Theorems~\ref{MPT} and~\ref{LINK}.
Moreover, \eqref{spazicor4} and~\eqref{spazicor44} are a consequence of definitions~\eqref{HALPHAMU} and~\eqref{tildeacca}, respectively. 
\end{proof}

\begin{appendix}
\section{Some justification for the choice of $\widetilde{\mathcal H}_{\alpha, \mu}(\Omega)$}
\label{sec-app}

Here we motivate our choice of defining the space~$\widetilde{\mathcal H}_{\alpha, \mu}(\Omega)$ as in~\eqref{tildeacca}. 

The reason is that, learning from the case $\alpha=0$, one may be tempted to consider instead the space
\[
\overline{\mathcal H}_{\alpha, \mu}(\Omega)
:=\left\{u\in\mathcal H_{\alpha, \mu}(\Omega) : \;\alpha\dfrac{\partial u}{\partial\nu}=0
\mbox{ on } \partial\Omega
 \quad\mbox{ and }\quad \int_{(0, 1)}\Ns u\, d\mu(s)=0
\mbox{ in }\R^N\setminus \overline{\Omega} \right\}
\]
even when $\alpha\neq 0$.
Following this definition, we can find a  complete orthogonal system 
for~$\overline{\mathcal H}_{\alpha, \mu}(\Omega)$ as we did for~$\widetilde{\mathcal H}_{0, \mu}(\Omega)$ in
Proposition~\ref{sistemacompleto}, that is,
from the sequence of eigenfunctions provided in~\cite[Theorem~1.5]{TUTTI}.  Indeed, the following result holds:

\begin{proposition}
The sequence $e_k$ is a complete orthogonal system for~$\overline{\mathcal H}_{\alpha, \mu}(\Omega)$.
\end{proposition}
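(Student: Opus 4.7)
The plan is to carry out the same two steps as in Proposition~\ref{sistemacompleto}, adapted to the presence of the classical Laplacian and of the boundary condition $\alpha\,\partial_\nu u=0$ on $\partial\Omega$. By construction in~\cite[Theorem~1.5]{TUTTI}, every eigenfunction $e_k$ already satisfies the full $(\alpha,\mu)$-Neumann conditions and thus belongs to $\overline{\mathcal H}_{\alpha,\mu}(\Omega)$, together with the weak eigenvalue relation $L_{\alpha,\mu}(e_k)+e_k=\lambda_k e_k$.

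For orthogonality, I would test this weak eigenvalue relation against $e_j$: taking into account that homogeneous Neumann data make the boundary/exterior terms in the associated bilinear form drop out, one obtains
\[
\alpha\int_\Omega \nabla e_k\cdot\nabla e_j\,dx+\int_{(0,1)}\frac{c_{N,s}}{2}\iint_{\Q}\frac{(e_k(x)-e_k(y))(e_j(x)-e_j(y))}{|x-y|^{N+2s}}\,dx\,dy\,d\mu(s)+\int_\Omega e_k e_j\,dx=\lambda_k\int_\Omega e_k e_j\,dx,
\]
which vanishes for $k\neq j$ thanks to the $L^2(\Omega)$-orthogonality provided by~\cite[Theorem~1.5]{TUTTI}. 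This is precisely the statement that $(e_k,e_j)_{\alpha,\mu}=0$ in the inner product induced by~\eqref{normaalfamu}.

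For the completeness, I would proceed as in Proposition~\ref{sistemacompleto}. Given $u\in\overline{\mathcal H}_{\alpha,\mu}(\Omega)$, the $L^2(\Omega)$-completeness of $\{e_k\}$ produces coefficients $a_k$ with $u=\sum_{k=1}^{+\infty} a_k e_k$ a.e.\ in $\Omega$. For $x\in\R^N\setminus\overline\Omega$, the nonlocal Neumann condition imposed on $u$ gives the same representation formula
\[
u(x)=\dfrac{\displaystyle\int_{(0,1)}\int_\Omega \frac{u(y)}{|x-y|^{N+2s}}\,dy\,d\mu(s)}{\displaystyle\int_{(0,1)}\int_\Omega \frac{dy}{|x-y|^{N+2s}}\,d\mu(s)}
\]
used in Proposition~\ref{sistemacompleto}, and the analogue of~\eqref{dhuwo4ty4tgufkjfbewksbefi4yoi} for each $e_k$, together with the Dominated Convergence Theorem (which is licit because $|x-y|$ stays bounded away from $0$ for $x$ outside $\overline\Omega$ and $y\in\Omega$), propagates the expansion to $\R^N\setminus\overline\Omega$.

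The main technical point I expect is that the classical Neumann condition $\alpha\,\partial_\nu u=0$ on $\partial\Omega$ does \emph{not} admit a reconstruction formula analogous to the one above for the nonlocal part. Consequently, while the pointwise representation $u=\sum_k a_k e_k$ in $\R^N$ is obtained exactly as in Proposition~\ref{sistemacompleto}, the step upgrading it to convergence in the norm $\|\cdot\|_{\alpha,\mu}$ should rely on the Parseval-type identity $\|e_k\|_{\alpha,\mu}^2=\lambda_k\|e_k\|^2_{L^2(\Omega)}$ derived from the eigenvalue equation displayed above, applied simultaneously to partial sums and to $u$ itself. This interplay between the classical trace condition on $\partial\Omega$ and the nonlocal exterior condition is in fact the very reason the appendix singles out $\overline{\mathcal H}_{\alpha,\mu}(\Omega)$ as a workable but less convenient alternative to $\widetilde{\mathcal H}_{\alpha,\mu}(\Omega)$.
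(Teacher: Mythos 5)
Your proof matches the paper's essentially step for step: orthogonality in $\overline{\mathcal H}_{\alpha,\mu}(\Omega)$ via the eigenvalue bilinear identity combined with $L^2(\Omega)$-orthogonality, and completeness by expanding the restriction of $u$ over $\Omega$ in the $e_k$ and propagating the expansion to $\R^N\setminus\overline\Omega$ through the nonlocal Neumann reconstruction formula and dominated convergence, exactly as in Proposition~\ref{sistemacompleto}. One small correction to your closing remark: the reason the paper ultimately prefers $\widetilde{\mathcal H}_{\alpha,\mu}(\Omega)$ over $\overline{\mathcal H}_{\alpha,\mu}(\Omega)$ is not any defect in this completeness argument (the paper's own proof also stops at the pointwise a.e.\ representation, just as yours does), but rather the failure of $\overline{\mathcal H}_{\alpha,\mu}(\Omega)$ to be closed under weak convergence, as the explicit one-dimensional example following this proposition shows.
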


\begin{proof}
The orthogonality in $\overline{\mathcal H}_{\alpha, \mu}(\Omega)$
comes from the orthogonality in $L^2(\Omega)$. Indeed,
for any~$k$, $j\in \N$, by construction we have that
\[
\alpha\int_\Omega \nabla e_k\cdot \nabla e_j\,dx
+\int_{(0,1)}\frac{c_{N, s}}{2}\iint_\Q\frac{(e_k(x)-e_k(y))(e_j(x)-e_j(y))}{|x-y|^{N+2s}}\,dx\,dy\,d\mu(s)=\lambda_k \int_\Omega e_k e_j\,dx.
\]
To prove the completeness, let $u\in \overline{\mathcal H}_{\alpha, \mu}(\Omega)$. {F}rom the completeness in $L^2(\Omega)$, if we 
consider the restriction of $u$ in $\Omega$, we can find coefficients~$a_k$ such that
\[
u=\sum_{k=1}^{+\infty}a_ke_k \quad \mbox{a.e. in } \Omega
.\]
Moreover, since $u\in \overline{\mathcal H}_{\alpha, \mu}(\Omega)$,
when $x\in \R^N\setminus \overline{\Omega}$ we have
\[
\begin{aligned}
u(x)&=
\dfrac{\displaystyle \int_{(0,1)}\int_\Omega \frac{u(y)}{|x-y|^{1+2s}}\,dy\,d\mu(s)}{\displaystyle \int_{(0,1)}\int_\Omega \frac{dy}{|x-y|^{1+2s}}\,d\mu(s)} \\
&=\dfrac{\displaystyle \int_{(0,1)}\int_\Omega \frac{\sum_{k=1}^{+\infty}a_ke_k(y)}{|x-y|^{1+2s}}\,dy\,d\mu(s)}{\displaystyle \int_{(0,1)}\int_\Omega \frac{dy}{|x-y|^{1+2s}}\,d\mu(s)} \\
&=\sum_{k=1}^{+\infty}a_k\dfrac{\displaystyle \int_{(0,1)}\int_\Omega \frac{e_k(y)}{|x-y|^{1+2s}}\,dy\,d\mu(s)}{\displaystyle \int_{(0,1)}\int_\Omega \frac{dy}{|x-y|^{1+2s}}\,d\mu(s)} \\
&=\sum_{k=1}^{+\infty}a_k e_k(x),
\end{aligned}          
\]
which completes the proof.
\end{proof}

However, the space $\overline{\mathcal H}_{\alpha, \mu}(\Omega)$ is not closed with respect to the weak convergence in~$\overline{\mathcal H}_{\alpha, \mu}(\Omega)$, not even in the local case when~$\mu\equiv 0$, as illustrated in the example below.

\begin{example}
Let $N=1$, $\Omega=(0,1)$ and $\mu\equiv 0$. In this case, the space
$\overline{\mathcal H}_{\alpha, \mu}(\Omega)$ boils down to
\[
\overline{\mathcal H}_{\alpha, 0}(\Omega)
=\left\{u\in H^1((0,1)):u'(0)=u'(1)=0 \right\}.
\]
Let $u_n$ be a sequence in $H^1((0,1))$ defined as
\[
u_n(x):=x^{1+\frac{1}{n}}(x-1)^2.
\]
Clearly, for any $n\in \N$,
\[
u'_n(x)=x^\frac{1}{n}(x-1)\left[\left(3+\frac{1}{n}\right)x -1-	\frac{1}{n}\right],
\]
and thus the sequence $u_n$ is in $\overline{\mathcal H}_{\alpha, 0}(\Omega)$. Moreover, we have that
\begin{eqnarray*}
&&\lim_{n\to+\infty}u_n(x) = x(x-1)^2=:u(x) \quad \mbox{a.e. in }(0,1) \\
{\mbox{and }}\quad && \lim_{n\to+\infty}
u'_n(x) = (x-1)(3x-1)=u'(x) \quad \mbox{a.e. in }(0,1).
\end{eqnarray*}

Now, we show that
\begin{equation}\label{843753dfsvfywqugewquyt}
{\mbox{$u_n$ converges strongly to $u$ in $H^1((0,1))$ as~$n\to+\infty$.}}\end{equation}
Indeed, 
\begin{eqnarray*}&&
\|u_n-u\|_{L^2((0,1))}^2=
\int_0^1\big|x^{1+\frac{1}{n}}(x-1)^2-x(x-1)^2\big|^2\,dx
\leq \int_0^1|x^{\frac{1}{n}}-1|^2\,dx \\
&&\qquad\qquad =\left(1+\frac{2}{n}\right)^{-1}-2\left(1+\frac{1}{n}\right)^{-1}
+1,
\end{eqnarray*}
which entails that
\begin{equation}\label{s3264753cascvbtry}
\lim_{n\to +\infty}\|u_n-u\|_{L^2((0,1))}=0.
\end{equation}

In addition,
\[
\begin{aligned}
\|u'_n-u'\|_{L^2((0,1))}&=
\int_0^1\left|x^\frac{1}{n}(x-1)\left[\left(3+\frac{1}{n}\right)x -1-	\frac{1}{n}\right]-(x-1)(3x-1)\right|^2\,dx \\
&\leq \int_0^1\left|\left[\left(3+\frac{1}{n}\right)x^{1+\frac{1}{n}} -3x\right]-\left[\left(1+\frac{1}{n}\right)x^{\frac{1}{n}}-1 \right]\right|^2\,dx \\
&=\left(3+\frac{1}{n}\right)^2\left(3+\frac{2}{n}\right)^{-1}
+\left(1+\frac{1}{n}\right)^2\left(1+\frac{2}{n}\right)^{-1}
+2\left(3+\frac{1}{n}\right)\left(2+\frac{1}{n}\right)^{-1} \\
&\qquad 
+6\left(1+\frac{1}{n}\right)\left(2+\frac{1}{n}\right)^{-1}
-2\left(3+\frac{1}{n}\right)\left(1+\frac{1}{n}\right)
\left(2+\frac{1}{n}\right)^{-1}-7.
\end{aligned}
\]
Therefore,
\[
\lim_{n\to +\infty}\|u'_n-u'\|_{L^2((0,1))}=0.
\]
This and~\eqref{s3264753cascvbtry} give~\eqref{843753dfsvfywqugewquyt}.

In particular, we have that~$u_n$ converges weakly to~$u$ in~$H^1((0,1))$ as~$n\to+\infty$,
but $u\not \in \overline{\mathcal H}_{\alpha, 0}(\Omega)$ as~$u'(0)=1\neq 0$.

This means that~$\overline{\mathcal H}_{\alpha, 0}(\Omega)$ is not closed with respect to the weak convergence in~$\overline{\mathcal H}_{\alpha, 0}(\Omega)$.
\end{example}

\end{appendix}

\section*{Acknowledgements} 
All the authors are members of the Australian Mathematical Society (AustMS). CS and EPL are members of the INdAM--GNAMPA.

This work has been supported by the Australian Laureate Fellowship FL190100081.

\vfill

\end{document}